\newtheorem{thm}{Theorem}[section]
 \newtheorem{cor}[thm]{Corollary}
 \newtheorem{lem}[thm]{Lemma}
 \newtheorem{defn}[thm]{Definition}
 \newtheorem{rem}[thm]{Remark}
 \newtheorem{ex}[thm]{Example}
 \newcommand{\T}{\mathrm T}
\newcommand{\ip}[2]{\langle {#1}, {#2} \rangle}
\newcommand{\ra}[1]{\mathrm{rank}({#1})}  % rank
\journal{...}
\numberwithin{equation}{section}
\begin{document}

\begin{frontmatter}

%% Title, authors and addresses

%% use the tnoteref command within \title for footnotes;
%% use the tnotetext command for theassociated footnote;
%% use the fnref command within \author or \affiliation for footnotes;
%% use the fntext command for theassociated footnote;
%% use the corref command within \author for corresponding author footnotes;
%% use the cortext command for theassociated footnote;
%% use the ead command for the email address,
%% and the form \ead[url] for the home page:
%% \title{Title\tnoteref{label1}}
%% \tnotetext[label1]{}
%% \author{Name\corref{cor1}\fnref{label2}}
%% \ead{email address}
%% \ead[url]{home page}
%% \fntext[label2]{}
%% \cortext[cor1]{}
%% \affiliation{organization={},
%%            addressline={},
%%            city={},
%%            postcode={},
%%            state={},
%%            country={}}
%% \fntext[label3]{}

\title{\bf Characterizations of structured Bohemian matrices and their Inner and Outer Bohemian Inverses
}

\address[1]{Department of Mathematical and Computational Sciences, National Institute of Technology Karnataka,
Surathkal, Mangalore, 575025, Karnataka, India}

\address[2]{Faculty of Sciences and Mathematics, University of Ni\v s, Vi\v segradska 33, 18000 Ni\v s, Serbia}

\author[1]{Geeta Chowdhry}
 \ead{geetac.207ma003@nitk.edu.in}
\author[2]{Predrag S. Stanimirovi\' c%\footnote [1]{Corresponding author. E-mail: pecko@pmf.ni.ac.rs}
}
\ead{pecko@pmf.ni.ac.rs}
\author[1]{Falguni Roy}
\ead{royfalguni@nitk.edu.in}

\begin{abstract}

In this paper, we systematically define and characterize various classes of Bohemian matrices with respect to the population $\mathbb{P}=\{0, \pm 1\}$, focusing on their inner and outer Bohemian inverses. 
The classes under consideration include rank-one Bohemian matrices, as well as higher-rank Bohemian matrix classes, specifically Classes I, II, and III.  
For rank-one Bohemian matrices, a complete description of the outer Bohemian inverse sets is provided along with their cardinalities. 
Additionally, new insights into inner Bohemian inverses and their cardinalities are given. 
Characterizations of the complete set of inner inverses for the Class III matrices and full-row rank Class II matrices are obtained. 
Furthermore, we study the sets of rank-one outer inverses for Classes I and II, and examine the sets of rank $r$ outer inverses for full-row rank Class II matrices of rank $r$. 
Moreover, the set of outer inverses is completely characterized for rank-two full-row rank Class III matrices. 
In particular, we provide an explicit formula for the cardinality of the set of outer Bohemian inverses for rank-two full-row rank Class I matrices.

\end{abstract}

%%Graphical abstract
%\begin{graphicalabstract}
%\includegraphics{grabs}
%\end{graphicalabstract}

%%Research highlights
% \begin{highlights}
% \item Research highlight 1
% \item Research highlight 2
% \end{highlights}

\begin{keyword} Bohemian inverses \sep Rhapsodic matrices \sep outer inverses \sep generalized inverses
%% keywords here, in the form: keyword \sep keyword

%% PACS codes here, in the form: \PACS code \sep code
\MSC 15A09 \sep 15B36
%% MSC codes here, in the form: \MSC code \sep code
%% or \MSC[2008] code \sep code (2000 is the default)

\end{keyword}

\end{frontmatter}

%% \linenumbers

%% main text
\section{Introduction}\label{SecIntro}

A family of Bohemian matrices refers to a collection of matrices whose entries are independently sampled from a finite set, typically consisting of integers with a defined upper limit (known as height). 
The term Bohemian matrix serves as a mnemonic for ``BOunded HEight Matrix of INtegers'' \cite{chan2022inner}. 
More precisely, a matrix $A\in \mathbb{K}^{m \times n} $ is termed as a {\it Bohemian matrix} if all of its entries belong to a specified set \(\mathbb{P}\), known as the population, which is typically discrete and finite. 
Although this class of matrices has been studied for a long time, it has not always been referred to by the name ``Bohemian matrix". 

Let $\mathbb{K}$ represent either the field of real numbers $\mathbb{R}$ or the field of complex numbers $\mathbb{C}$. 
The collection of all $m \times n$ matrices with entries from $\mathbb{K}$ is denoted by $\mathbb{K}^{m \times n}$. 
The set of matrices with rank $r$ in $ \mathbb{K}^{m \times n}$ is denoted by $ \mathbb{K}_r^{m \times n} $.
An $m \times n$ matrix $A$ will be often denoted by $A_{mn}$. 
The rank of $A\in \mathbb{K}^{m \times n}$ is denoted as $\ra{A}$, and the sum of all entries of $A$ is denoted as $\Xi(A)$. 
The transpose and conjugate transpose of $A$ are denoted by $A^\T$ and $A^*$, respectively. 
For vectors $x,y \in \mathbb{K}^{m \times 1}$, the notation $\ip{x}{y}$ denotes the Euclidean inner product $\ip{x}{y} :=x^* y$ in complex domain and $\ip{x}{y} :=x^\T y$ in the real domain.

The set of all $m$ by $n$ Bohemian matrices over a population $\mathbb{P}$ will be denoted as $\mathbb{P}^{m\times n}$. 
Some examples of Bohemian matrices include Mandelbrot matrices \cite{chan2019algebraic}, Bernoulli matrices \cite{tikhomirov2020singularity}, Metzler matrices \cite{briat2017sign}, Hadamard matrices \cite{horadam2012hadamard}, etc.
Bohemian matrices frequently appear in the analysis of linear systems, including solving such systems and computing their eigenvalues.  
Although low-dimensional Bohemian matrices are relatively simple, many open problems remain open in this topic.  
For instance, the exact number of singular $6 \times 6$ matrices with entries restricted to the set $\{ 0, \pm 1\}$ is still unknown \cite{thornton2019algorithms}.
Furthermore, if $A \in \mathbb{P}^{n \times n}$ is a Bohemian matrix, its inverse $A^{-1}$ does not necessarily be Bohemian with respect to the same population $\mathbb{P}$ \cite{martinez20210}. 
Matrices whose inverses remain Bohemian with respect to the same population are referred to as rhapsodic \cite{chan2022inner}.  
This naturally leads to the problem of identifying matrices that possess the rhapsodic property.  
It is evident that if the population $\mathbb{P}$ is a subfield of the field $\mathbb{K}$, then every nonsingular $n \times n$ matrix with entries in $\mathbb{P}$ is rhapsodic. 
Throughout this paper, unless otherwise specified, it will be assumed that the population is $\mathbb{P}=\{0, \pm 1\}$.

A natural research problem is to investigate the rhapsodic behaviour of generalized inverses.
Significant contributions in this direction can be found in \cite{chu2011magic,chan2022inner} and \cite{chowdhry2025characterizations}. 
For a matrix $A \in \mathbb{K}^{m \times n}$, a matrix $X\in \mathbb{K}^{n\times m}$ is known as the Moore-Penrose inverse of $A$ if it satisfies the following four Penrose equations \cite{penrose1955generalized}
$$ (1)\ AXA=A, \ (2) \ XAX=X, \ (3) \ AX=(AX)^*, \ (4) \ XA=(XA)^*.$$
Inner and outer generalized inverses are important in the literature.
A matrix $X \in \mathbb{K}^{n \times m}$ is referred to as an inner inverse or $\{1\}$-inverse of $A \in \mathbb{K}^{m \times n}$ if the matrix equation $AXA=A$ holds. 
If $X$ satisfies $XAX=X$, it is known as an outer inverse or $\{2\}$-inverse of $A$. 
The set of all $\{1\}$-inverses of $A$ is denoted as $A\{1\}$, and the set of all $\{2\}$-inverses of $A$ is denoted as $A\{2\}$. 
Additionally, the set of outer inverses with prescribed rank $s$ is denoted as $A\{2\}_s$. 
In general, the set of all $\{i,j,k, \ldots \}$-inverses (satisfying equations $(i),(j),(k), \ldots $) of $A$ is denoted as $A\{i,j,k,\ldots\}$.

Recent studies have focused on the inner inverses of Bohemian matrices, as detailed in \cite{chan2022inner}.
These studies provide new insights into the structure of inner inverses and inner Bohemian inverses of full and well-settled structured Bohemian matrices. 
For a matrix $A \in \mathbb{P}^{m \times n}$, an inner Bohemian inverse of $A$ is defined as a matrix $X \in \mathbb{K}^{n \times m}$ satisfying  
$
X \in A\{1\} \cap \mathbb{P}^{n \times m}.
$
This condition ensures $X \in A\{1\}$, while entries of $X$ remain within the specified population $\mathbb{P}$. 
Analogous to the definition of inner Bohemian inverse, $X \in \mathbb{K}^{n \times m}$ is said to be an outer Bohemian inverse of $A \in \mathbb{P}^{m \times n}$ if $X \in A\{2\} \cap \mathbb{P}^{n \times m}$. 
In general, $X \in \mathbb{K}^{n \times m}$ is referred to as $\{i,j,\ldots\}$-Bohemian inverse of $A \in \mathbb{P}^{m \times n}$ if $X \in A\{i,j,\ldots\} \cap \mathbb{P}^{n \times m}$ and
$A_{\mathbb{P}}\{i,j,\ldots\}=\{X|\ X \in A\{i,j,\ldots\} \cap \mathbb{P}^{n \times m}\}$. 
Moreover, several open problems are mentioned in \cite{chan2022inner}. 
In \cite{chowdhry2025characterizations}, the open problem 2 has been addressed. 
Furthermore, \cite{chowdhry2025characterizations}, provides characterizations of rank-one Bohemian matrices and new insights into the structure of $\{1,3\}$ and $\{1,4\}$-Bohemian inverses are provided for rank-one Bohemian matrices and well-settled matrices. 
Additionally, both the works \cite{chan2022inner,chowdhry2025characterizations}, determine the cardinalities of the sets of the concerned Bohemian inverses of these structured matrices.

Our goal is to characterize certain extended classes of Bohemian matrices and to examine representations of both outer and inner inverses of these matrices. 
Additionally, the cardinality of the sets of inner and outer Bohemian inverses of specific structured matrices is studied.

Significant research has been conducted independently on generalized inverses and Bohemian matrices. 
Some notable results on Bohemian matrices can be found in several studies  
\cite{taussky1960matrices,taussky1961some,thornton2019algorithms,barrett2016symmetric,martinez20210,guyker2007magic}. 
Additional findings related to Bohemian matrices are presented in the references \cite{CHAN2020,SendraBoh1,SendraBoh2,thornton2019algorithms,Boh3}.
These results encompass the examination of matrices with integer entries, the rhapsodic behavior of inverses with weaker constraints when the inverse is similar to a Bohemian matrix, and the inversion of equimodular matrices. 
Research on magic squares with magic inverses \cite{guyker2007magic} and magic pseudoinverses \cite{chu2011magic} has also been conducted.
Furthermore, the inverse of a nonsingular semi-magic square and the Moore-Penrose inverse of a semi-magic square are semi-magic squares \cite{Schmidt01072001}.

The literature on generalized inverses is extensive. 
Some notable works on representations of the outer and inner inverses of matrices include 
\cite{wei1998characterization,sheng2007full,stanimirovic2011full,stanimirovic2017conditions,stanimirovic2022representations}.
These studies provide representations and characterizations of outer and inner inverses, as well as their subsets, and explore geometric properties of the generalized inverses.
However, many existing representations of inner and outer inverses of matrices are ineffective for determining inner and outer Bohemian inverses. 
These representations fail to clarify whether the sets of inner or outer Bohemian inverses are empty or non-empty. 
Therefore, our objective is to provide explicit representations of both the outer and inner inverses of Bohemian matrices.
We also aim to determine the cardinalities of specific subsets of outer and inner Bohemian inverses.

The presentation of this paper is outlined as follows.
Motivation and preliminary are presented in Section \ref{secMotiv}.
In Section \ref{sec3}, we characterize rank-one Bohemian matrices and generalized well-settled matrices for the population $\mathbb{P}=\{0, \pm 1\}$. 
We also provide simplified findings regarding Inner Bohemian inverses in \cite{chan2022inner}, along with new insights into the cardinalities of inner Bohemian inverses across different classes of structured Bohemian matrices.
In section \ref{sec 4}, we define and characterize several extended classes of Bohemian matrices and study their inner Bohemian inverses.
Section \ref{sec5} focuses on the study of Bohemian inverses for various classes, as well as the cardinalities of sets of outer Bohemian inverses for specific structured matrices.

\section{Motivation and preliminaries} \label{secMotiv}

Let $\mathcal{S} \subseteq \mathbb{K}^{m \times n}$ be a set of matrices. 
If $\mathcal{S}$ represents a finite set, then the notation $\# \mathcal{S}$ denotes the cardinality of the set $\mathcal{S}$, which refers to the number of matrices it contains. 
The expression $U \mathcal{S} V$ denotes the set
$U \mathcal{S} V := \{ USV \mid S \in \mathcal{S} \}$, where $U$, $V$ are matrices of compatible sizes. 
Similarly, we use $U \mathcal{S} := \{ US \mid S \in \mathcal{S} \}$ and $\mathcal{S} V := \{ SV \mid S \in \mathcal{S} \}$ to denote left and right multiplication, respectively. 
For a scalar $\lambda \in \mathbb{K}$, the scaled set $\mathcal{S}$ is written as
$\lambda \mathcal{S} := \{ \lambda S \mid S \in \mathcal{S} \},$
and the set of conjugate transposes is given by
$\mathcal{S}^* := \{ S^* \mid S \in \mathcal{S} \},$
where $S^*$ denotes the conjugate transpose of $S$.

Next, we will restate some properties of outer inverses.
\begin{lem} \label{1.1}
    Let $A \in \mathbb{K}^{m \times n}$, let $U \in \mathbb{K}^{m \times m}$ and $V \in \mathbb{K}^{n \times n}$ be unitary matrices and let $0 \neq \lambda \in \mathbb{K}$. 
    It holds that
    \begin{enumerate}    
        \item $(\lambda A)\{i\}=\frac{1}{\lambda}(A\{i\})$,
        \item $(UAV)\{i\}=V^*(A\{i\})U^*$,
        \item 
        $(A\{i\})^*=A^*\{i\}$,
    \end{enumerate}
where $i \in \{1,2\}$.
\end{lem}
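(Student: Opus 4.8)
The plan is to verify each of the three identities directly from the defining equations $(1)$ and $(2)$, treating each as an equality of sets established by two inclusions; in every case one inclusion comes from a substitution and the reverse from applying the inverse substitution (the scalar $\tfrac1\lambda$, the adjoints $U^*,V^*$, and the involution $*$, respectively).

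For the first identity, fix $i=1$. A matrix $X\in\mathbb{K}^{n\times m}$ lies in $(\lambda A)\{1\}$ precisely when $(\lambda A)X(\lambda A)=\lambda A$, i.e. $\lambda^2 AXA=\lambda A$, which, since $\lambda\neq 0$, is equivalent to $A(\lambda X)A=A$; thus $\lambda X\in A\{1\}$, i.e. $X\in\tfrac1\lambda\bigl(A\{1\}\bigr)$. For $i=2$, $X\in(\lambda A)\{2\}$ means $\lambda XAX=X$; setting $Y=\lambda X$ one checks $YAY=\lambda^2 XAX=\lambda X=Y$, so $Y\in A\{2\}$ and $X=\tfrac1\lambda Y$, and conversely the same computation read backwards shows $\tfrac1\lambda Y\in(\lambda A)\{2\}$ whenever $Y\in A\{2\}$.

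For the second identity, use that $U,V$ unitary gives $U^*U=UU^*=I_m$ and $V^*V=VV^*=I_n$. If $X\in(UAV)\{1\}$, multiplying $(UAV)X(UAV)=UAV$ on the left by $U^*$ and on the right by $V^*$ yields $A(VXU)A=A$, hence $VXU\in A\{1\}$ and $X=V^*(VXU)U^*\in V^*\bigl(A\{1\}\bigr)U^*$; the reverse inclusion follows by inserting $X=V^*ZU^*$ with $Z\in A\{1\}$ and running the computation in the other direction. For $i=2$, if $X\in(UAV)\{2\}$, set $Z=VXU$ and compute $ZAZ=VX(UAV)XU=VXU=Z$, so $Z\in A\{2\}$ and $X=V^*ZU^*$; the converse is analogous. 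The third identity uses the anti-automorphism property $(PQR)^*=R^*Q^*P^*$: from $AXA=A$ one gets $A^*X^*A^*=A^*$, so $X^*\in A^*\{1\}$, proving $(A\{1\})^*\subseteq A^*\{1\}$, and applying the same fact with $A^*$ in place of $A$ together with $(A^*)^*=A$ gives the reverse inclusion; the case $i=2$ is identical, transposing $XAX=X$ to $X^*A^*X^*=X^*$ (in the real setting one replaces $*$ by $\T$ throughout).

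There is no serious obstacle; the only point needing a little care is to obtain set \emph{equality} rather than a one-sided inclusion, which in each item is secured by exhibiting the inverse of the transformation applied.
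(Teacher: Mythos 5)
Your verification is correct: each identity is established by the expected substitution ($\lambda X$, $VXU$, $X^*$) together with its inverse, and all the computations check out for both $i=1$ and $i=2$. The paper itself gives no proof of Lemma \ref{1.1} --- it is merely restated as a collection of standard properties of inner and outer inverses --- so your direct two-inclusion argument is precisely the routine verification the authors leave implicit, and there is nothing to compare beyond that.
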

Corollary \ref{cardinaliy} follows from Lemma \ref{1.1}.
\begin{cor} \label{cardinaliy}
    Let $\mathbb{P}=\{0, \pm 1\}$ and $A \in \mathbb{P}^{m \times n}$, let $U \in \mathbb{K}^{m \times m}$ and $V \in \mathbb{K}^{n \times n}$ be unitary matrices and let $0 \neq \lambda \in \mathbb{K}$. 
   Then it holds that
    \begin{enumerate}
      
        \item $\#(\lambda A)_{\mathbb{P}}\{i\}=\#(A_{\mathbb{P}}\{i\})$= $\#(UAV)_{\mathbb{P}}\{i\}$,
        \item 
        $\#(A_{\mathbb{P}}\{i\})^*=\#A^*_{\mathbb{P}}\{i\}$,
    \end{enumerate}
where $i \in \{1,2\}$.
\end{cor}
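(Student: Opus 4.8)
The plan is to derive every cardinality equality from a single principle: each identity in Lemma~\ref{1.1} is induced by a bijection of the ambient matrix space that carries one $\{i\}$-inverse set onto another, and a bijection between finite sets preserves cardinality as soon as it is seen to restrict to the Bohemian populations. Writing each Bohemian inverse set as the intersection $A_{\mathbb P}\{i\}=A\{i\}\cap\mathbb P^{n\times m}$, I would track how the relevant bijection acts simultaneously on the inverse-set factor and on the population factor $\mathbb P^{n\times m}$.

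For item~(2) the bijection in play is $X\mapsto X^{*}$. By Lemma~\ref{1.1}(3) it maps $A\{i\}$ bijectively onto $A^{*}\{i\}$. Because $\mathbb P=\{0,\pm1\}$ is real and symmetric under negation, conjugate transposition carries $\mathbb P^{n\times m}$ exactly onto $\mathbb P^{m\times n}$, so it also sends $A\{i\}\cap\mathbb P^{n\times m}$ onto $A^{*}\{i\}\cap\mathbb P^{m\times n}$. This yields the set identity $(A_{\mathbb P}\{i\})^{*}=A^{*}_{\mathbb P}\{i\}$, and equality of cardinalities follows at once; I expect this part to be routine.

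For item~(1) the two bijections are $\Phi\colon X\mapsto\lambda X$ and $\Psi\colon X\mapsto V^{*}XU^{*}$. Lemma~\ref{1.1}(1)--(2) already supplies that $\Phi$ maps $A\{i\}$ bijectively onto $(\lambda A)\{i\}$ and that $\Psi$ maps $A\{i\}$ bijectively onto $(UAV)\{i\}$, so on the level of the full inverse sets the cardinalities coincide. The entire content of the corollary is therefore concentrated in one point: showing that $\Phi$ and $\Psi$ also restrict to bijections of the Bohemian factors, i.e.\ that each of them sends $\mathbb P^{n\times m}$ onto $\mathbb P^{n\times m}$. I would establish this entrywise, noting that $\Phi$ multiplies each entry by $\lambda$ while $\Psi$ forms $V^{*}XU^{*}$, and then combine the two restricted maps with the restricted inverse bijections $Y\mapsto\tfrac1\lambda Y$ and $Y\mapsto VYU$ to obtain mutually inverse maps between the finite sets $A_{\mathbb P}\{i\}$, $(\lambda A)_{\mathbb P}\{i\}$ and $(UAV)_{\mathbb P}\{i\}$.

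I expect this population-invariance step to be the main obstacle, and it is genuinely the heart of the argument: the bijections themselves are automatic from Lemma~\ref{1.1}, but stability of the discrete set $\{0,\pm1\}$ under entrywise scaling by $\lambda$ and under the conjugation $X\mapsto V^{*}XU^{*}$ is precisely what couples the scalar $\lambda$ and the unitaries $U,V$ to the population. My approach would be to test the action of $\Phi$ and $\Psi$ on the standard basis matrices $E_{kl}\in\mathbb P^{n\times m}$ and to establish that their images again lie in $\mathbb P^{n\times m}$; once this invariance is secured, both equalities in item~(1) drop out immediately from the finite-set bijections constructed above.
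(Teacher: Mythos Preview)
Your strategy coincides with the paper's: its entire proof is the one-line assertion that the maps $X\mapsto\tfrac1\lambda X$, $X\mapsto V^{*}XU^{*}$ and $X\mapsto X^{*}$ are bijections of $\mathbb P^{n\times m}$, which is precisely the population-invariance step you single out as the heart of the matter. Your treatment of item~(2) is correct and in fact yields the stronger set identity $(A_{\mathbb P}\{i\})^{*}=A^{*}_{\mathbb P}\{i\}$.

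For item~(1), however, the obstacle you anticipate is not merely the main step but an actual failure at the stated level of generality, and your proposed test on the basis matrices $E_{kl}$ will expose this rather than resolve it. For arbitrary $\lambda\ne 0$ the map $X\mapsto\lambda X$ does not preserve $\mathbb P^{n\times m}$: already with $m=n=1$, $A=(1)$ and $\lambda=2$ one has $A_{\mathbb P}\{1\}=\{(1)\}$ while $(2A)_{\mathbb P}\{1\}=\{(\tfrac12)\}\cap\mathbb P=\varnothing$, so the two cardinalities differ. Likewise a generic unitary (e.g.\ a planar rotation by $\pi/4$) sends $E_{11}$ outside $\{0,\pm1\}^{n\times m}$. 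The corollary, and the paper's proof, are only valid under the tacit extra hypothesis---acknowledged in the paragraph immediately following it and the only case ever used later---that $\lambda\in\{\pm1\}$ and that $U,V$ are signed permutation matrices (products of a permutation matrix and a diagonal $\pm1$ matrix). Under that restriction your entrywise check on the $E_{kl}$ goes through trivially, and the rest of your argument is complete.
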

\begin{proof}
    For $X \in \mathbb{P}^{n \times m}$, the maps $X \rightarrow  \frac{1}{\lambda} X$, $X \rightarrow V^*XU^*$ and $X \rightarrow X^*$ are bijective. Hence the proof.
\end{proof}

\begin{defn} {\rm \cite{horn2012matrix}} Two $m \times n$ matrices $A$ and $B$ are known as unitary equivalent if there exist unitary matrices $U$ and $V$ such that $B=UAV$. 
Similarly, $A$ and $B$ are known as permutation equivalent if $B=PAQ$ holds for some permutation matrices $P$ and $Q$.
\end{defn}

Let $A \in \mathbb{K}^{m \times n}$ and $B$ be any matrix unitary equivalent to $A$, satisfying $B=UAV$ for some unitary matrices $U$ and $V$. 
According to Corollary \ref{cardinaliy}, it is clear that the sets of generalized inverses $A\{i\}$ and $B\{i\}$ for 
$i \in \{1,2\}$ are different; however, the cardinalities of these sets remain the same. 
Therefore, for unitary equivalent matrices, the cardinalities of the sets of the outer Bohemian inverses and the inner Bohemian inverses are preserved, meaning $\#A_{\mathbb{P}}\{i\}$= $\#B_{\mathbb{P}}\{i\}$ for $i \in \{1,2\}$. 
This fact will be used throughout the paper. 
For two unitary equivalent matrices $A$ and $B$, the sets $A\{i\}$ and $B\{i\}$ may differ; however, knowing one set allows us to easily derive the other. 
Moreover, the Bohemian unitary equivalence satisfies the condition $B_{\mathbb{P}}\{i\}=(UAV )_{\mathbb{P}}\{i\} = V^\T(A_{\mathbb{P}}\{i\})U^\T \subset \mathbb{P}^{n \times m}$ for the special case when $U$ and $V$ are invertible Bohemian diagonal matrices.
The same conclusions hold for the special case of permutation equivalence. 
In the upcoming sections, we will utilize these facts of unitary and permutation equivalence to explore the generalized inverses of a simpler class of matrices. 
This analysis will assist us in determining the generalized inverses of a more complex class of matrices.
The following result provides a representation of the outer inverses with the prescribed rank for a matrix containing some zero columns.

The block matrix $A=(B \ |\ C) \in \mathbb{K}^{m \times {(n_1+n_2)}}$ means $A$ is a partitioned matrix such that $B  \in \mathbb{K}^{m \times {n_1}}$ and $C  \in \mathbb{K}^{m \times {n_2}}$.
Similarly, $A=\begin{pmatrix}    B \\ C\end{pmatrix} \in \mathbb{K}^{(m_1+m_2) \times {n}}$ 
means the matrix $A$ partitioned into the blocks $B\in \mathbb{K}^{m_1 \times {n}}$ and $C\in \mathbb{K}^{m_2 \times {n}}$.

\begin{lem} \label{1.2}
Outer inverses of $A=(B_{mn_1}\ |\ 0_{mn_2}) \in  \mathbb{K}^{m \times (n_1+n_2)}$ of rank $s\leq \ra{A}$ are defined by the set
\begin{equation*}
\aligned
    A\{2\}_s&=\left\{\begin{pmatrix}
            X_1 \\
            X_2
        \end{pmatrix} :\ X_1 \in B\{2\}_s\in  \mathbb{K}^{n_1 \times m}, \  X_2BX_1=X_2\in  \mathbb{K}^{n_2 \times m}\right\}\\
        &=\left\{\begin{pmatrix}            X_1 \\            X_2        \end{pmatrix} :\ X_1 =P Q \in  \mathbb{K}^{n_1 \times m}, P\in  \mathbb{K}^{m \times s},Q \in  \mathbb{K}^{s \times n_1}, QBP=I_s, \  X_2B PQ=X_2\in  \mathbb{K}^{n_2 \times m}\right\}.
        \endaligned
\end{equation*}
\end{lem}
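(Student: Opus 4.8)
The plan is to prove the first characterization directly from the definition of a rank-$s$ outer inverse, and then obtain the second characterization by substituting a full-rank factorization of $X_1$. Write $X = \begin{pmatrix} X_1 \\ X_2 \end{pmatrix}$ with $X_1 \in \mathbb{K}^{n_1 \times m}$ and $X_2 \in \mathbb{K}^{n_2 \times m}$, so that $AX = BX_1$ and $XAX = \begin{pmatrix} X_1 B X_1 \\ X_2 B X_1 \end{pmatrix}$. First I would establish the set inclusion ``$\subseteq$'': if $X \in A\{2\}_s$, then $XAX = X$ forces $X_1 B X_1 = X_1$ and $X_2 B X_1 = X_2$. For the rank condition, note $\mathrm{rank}(X_1) \le \mathrm{rank}(X) = s$; conversely, from $X_2 = X_2 B X_1$ we get $\mathrm{rank}(X) \le \mathrm{rank}(X_1)$ (since the second block is a right multiple of $X_1$ via $X_2 B$), hence $\mathrm{rank}(X_1) = s$ and $X_1 \in B\{2\}_s$. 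The reverse inclusion ``$\supseteq$'' is symmetric: given $X_1 \in B\{2\}_s$ and $X_2 B X_1 = X_2$, one checks $XAX = X$ block-wise and that $\mathrm{rank}(X) = s$ using the same rank sandwich, so $X \in A\{2\}_s$.

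Next I would pass to the second description. The key fact is the standard full-rank factorization characterization of rank-$s$ outer inverses of $B$: a matrix $X_1 \in \mathbb{K}^{n_1 \times m}$ satisfies $X_1 \in B\{2\}_s$ if and only if $X_1 = PQ$ for some $P \in \mathbb{K}^{n_1 \times s}$ of full column rank and $Q \in \mathbb{K}^{s \times m}$ of full row rank with $QBP = I_s$. (The ``only if'' direction is a rank factorization of $X_1$ together with the identity $X_1 B X_1 = X_1$, which forces $PQBPQ = PQ$ and hence $QBP = I_s$ after cancelling the one-sided inverses of $P$ and $Q$; the ``if'' direction is immediate.) Substituting $X_1 = PQ$ into the first set and noting that $X_2 B X_1 = X_2 B P Q = X_2$ is exactly the stated condition yields the second equality. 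One should observe that $QBP = I_s$ already implies $Q$ has full row rank and $P$ has full column rank, so there is no loss in how the set is written. Note also that a slight notational point: in the first line of the lemma $Q$ is written as lying in $\mathbb{K}^{s \times n_1}$, but for $X_1 = PQ \in \mathbb{K}^{n_1 \times m}$ to make sense and for $QBP = I_s$ one needs $Q \in \mathbb{K}^{s \times m}$; I would state it with $Q \in \mathbb{K}^{s \times m}$.

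The main obstacle, such as it is, is the rank bookkeeping: one must argue carefully that the equation $X_2 = X_2 B X_1$ forces every row of $X$ to lie in the row space of $X_1$, so that $\mathrm{rank}(X) = \mathrm{rank}(X_1)$ rather than merely $\mathrm{rank}(X) \ge \mathrm{rank}(X_1)$. This is what pins the prescribed rank $s$ to the factor $X_1$ alone and lets the constraint on $X_2$ be stated purely as the linear equation $X_2 B P Q = X_2$. Everything else is routine block multiplication, and the condition $s \le \mathrm{rank}(A) = \mathrm{rank}(B)$ is exactly what guarantees $B\{2\}_s$ (and hence $A\{2\}_s$) is nonempty, so the sets on both sides are genuinely the same, possibly empty, family.
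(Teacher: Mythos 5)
Your proposal is correct and follows essentially the same route as the paper: the paper's proof consists of exactly the block computation $XAX=\begin{pmatrix} X_1BX_1\\ X_2BX_1\end{pmatrix}$ plus a citation of the known full-rank-factorization representation of rank-$s$ outer inverses, which is the standard fact you restate and sketch; you merely fill in the rank bookkeeping that the paper leaves implicit. Your observation about the dimensions of $P$ and $Q$ is a fair catch of a notational slip in the statement (they should be $P\in\mathbb{K}^{n_1\times s}$, $Q\in\mathbb{K}^{s\times m}$), but it does not change the argument.
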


\begin{proof}
The first part of the proof is derived from
$$
\begin{pmatrix}
            X_1 \\             X_2
        \end{pmatrix} (B \ |\ 0) \begin{pmatrix}            X_1 \\            X_2        \end{pmatrix}=
\begin{pmatrix}
            X_1BX_1 \\            X_2B X_1        \end{pmatrix},
$$
and the second part comes from a known representation of outer inverses with prescribed rank \cite[P. 57, Theorem 5]{benbook}.
\end{proof}

We will now introduce some additional notations that will be important for future discussions.
We denote by $(0_{mn}), (1_{mn}), \text{ and } ({-1}_{mn})$ the zero matrix, the matrix with all entries equal to $1$ and the matrix with all entries equal to $-1$, respectively. 
Furthermore, the notation $(\pm 1_{mn_1} \ | \ \mp 1_{mn_2})$ will refer simultaneously the matrices $(1_{mn_1} \ | \, -1_{mn_2})$ and $(-1_{mn_1} \ | \ 1_{mn_2})$.

First, we restate the notion of full and well-settled matrices defined in \cite{chan2022inner}.
\begin{defn}\label{def1} {\rm \cite{chan2022inner}}
    A matrix $A \in \mathbb{K}^{m \times n}$ is said to be a full matrix if it takes any of the following forms:
    \begin{enumerate}
        \item Type I full matrix: $(\pm 1_{mn})$.
        \item Type II full matrix: $(\pm 1_{mn_1} | \mp 1_{mn_2})$.
        \item Type III full matrix: $(\pm 1_{mn_1} \ |\ 0_{mn_2})$.
        \item Type IV full matrix: $(\pm 1_{mn_1}\  |\ \mp 1_{mn_2}\ | \ 0_{mn_3})$.
    \end{enumerate}
\end{defn}

Lemma \ref{multiply full matrix} explores the impact of left and right multiplication by specific full matrices.

\begin{lem} {\rm \cite{chan2022inner}} \label{multiply full matrix}  Let $X = \begin{pmatrix} X_1 \\ X_2 \end{pmatrix} \in \mathbb{K}^{(n_1+n_2) \times m}$, where $n_1+n_2=n$. 
Then
    \begin{enumerate}
  \item $(1_{mn}) \, X \, (1_{rs}) = \Xi(X) \, (1_{m s})$.
  \item $(1_{m  n}) \, X (1_{r s_1} \ | \ 1_{r s_2})   = \Xi(X) (  1_{m  s_1}\ | \  1_{m  s_2})$.
  \item
  $( 1_{m  n_1} \ | -1_{m n_2}  X ( 1_{r  s_1} \ |\,  -1_{r  s_2})
  = ( \Xi(X_1) - \Xi(X_2)) (1_{m  s_1} \ |\,  -1_{m  s_2}).$
  \item $(1_{m  n_1} \ | \,  -1_{m  n_2})  X \, (1_{r s}) = \left( \Xi(X_1) - \Xi(X_2) \right) (1_{m  s}).$
\end{enumerate}
\end{lem}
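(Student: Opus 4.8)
The plan is to exploit that every ``full'' block occurring in the statement is a rank-one matrix, namely an outer product of (possibly signed) all-ones column vectors, and then to carry the computation through using only associativity of matrix multiplication. Write $\mathbf{1}_k := (1_{k1}) \in \mathbb{K}^{k \times 1}$ for the column vector of $k$ ones, so that $(1_{ab}) = \mathbf{1}_a \mathbf{1}_b^{\T}$, and likewise $(1_{ab_1} \mid 1_{ab_2}) = \mathbf{1}_a(\mathbf{1}_{b_1}^{\T} \mid \mathbf{1}_{b_2}^{\T})$ and $(1_{ab_1} \mid -1_{ab_2}) = \mathbf{1}_a(\mathbf{1}_{b_1}^{\T} \mid -\mathbf{1}_{b_2}^{\T})$. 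The single elementary fact used is that, for any $Y \in \mathbb{K}^{p \times q}$, the scalar $\mathbf{1}_p^{\T} Y \mathbf{1}_q$ equals $\Xi(Y)$, the sum of all entries of $Y$ (indeed $\mathbf{1}_p^{\T} Y$ is the row of column sums of $Y$, and right multiplication by $\mathbf{1}_q$ adds these up).

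For item (1) I would write $(1_{mn})\, X\, (1_{rs}) = \mathbf{1}_m\big(\mathbf{1}_n^{\T} X \mathbf{1}_r\big)\mathbf{1}_s^{\T}$; since the middle factor is the scalar $\Xi(X)$, regrouping gives $\Xi(X)\,\mathbf{1}_m \mathbf{1}_s^{\T} = \Xi(X)\,(1_{ms})$. Items (2) and (4) are the same computation with one of the two outer ones-matrices replaced by a column-partitioned version, respectively by a signed, row-partitioned version: in (2) the middle scalar is still $\mathbf{1}_n^{\T} X \mathbf{1}_r = \Xi(X)$ and the surviving outer factor is $\mathbf{1}_m(\mathbf{1}_{s_1}^{\T} \mid \mathbf{1}_{s_2}^{\T}) = (1_{ms_1} \mid 1_{ms_2})$; in (4) one uses the partition $X = \begin{pmatrix} X_1 \\ X_2 \end{pmatrix}$ to get the middle factor $(\mathbf{1}_{n_1}^{\T} \mid -\mathbf{1}_{n_2}^{\T})\begin{pmatrix} X_1 \\ X_2 \end{pmatrix}\mathbf{1}_r = \mathbf{1}_{n_1}^{\T} X_1 \mathbf{1}_r - \mathbf{1}_{n_2}^{\T} X_2 \mathbf{1}_r = \Xi(X_1) - \Xi(X_2)$, while the surviving outer factor is $\mathbf{1}_m \mathbf{1}_s^{\T} = (1_{ms})$.

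Item (3) combines both modifications at once: $(1_{mn_1} \mid -1_{mn_2})\, X\, (1_{rs_1} \mid -1_{rs_2}) = \mathbf{1}_m\Big[(\mathbf{1}_{n_1}^{\T} \mid -\mathbf{1}_{n_2}^{\T})\begin{pmatrix} X_1 \\ X_2 \end{pmatrix}\mathbf{1}_r\Big](\mathbf{1}_{s_1}^{\T} \mid -\mathbf{1}_{s_2}^{\T})$, where the bracketed scalar equals $\Xi(X_1) - \Xi(X_2)$ by the computation just made, and $\mathbf{1}_m(\mathbf{1}_{s_1}^{\T} \mid -\mathbf{1}_{s_2}^{\T}) = (1_{ms_1} \mid -1_{ms_2})$; this is precisely the asserted identity.

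I do not anticipate any genuine obstacle: the statement is a direct verification. The only care required is bookkeeping — keeping the row-partition $X = \begin{pmatrix} X_1 \\ X_2 \end{pmatrix}$ consistent with the column decompositions $n = n_1 + n_2$ and $s = s_1 + s_2$, and checking that all displayed products are defined, which forces the implicit dimension constraints (for instance, the number of columns of the left ones-matrix must equal the number of rows of $X$) — together with recording the identity $\mathbf{1}_p^{\T} Y \mathbf{1}_q = \Xi(Y)$, and its block form $(\mathbf{1}_{n_1}^{\T} \mid -\mathbf{1}_{n_2}^{\T})\begin{pmatrix} X_1 \\ X_2 \end{pmatrix}\mathbf{1}_q = \Xi(X_1) - \Xi(X_2)$, once and for all.
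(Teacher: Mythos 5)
Your proposal is correct: factoring each full matrix as an outer product of signed all-ones vectors and using $\mathbf{1}_p^{\T} Y \mathbf{1}_q = \Xi(Y)$ verifies all four identities, and you rightly note the implicit dimension constraint $r=m$ needed for the products to be defined. Note that the paper itself gives no proof of this lemma — it is restated from the cited reference \cite{chan2022inner} — so there is nothing to compare against; your self-contained verification is consistent in spirit with how the paper manipulates full matrices elsewhere (e.g.\ the factorizations in equations \eqref{eq thm 3.2} and \eqref{eq thm 3.2 2}).
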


\begin{defn} {\rm \cite{chan2022inner}} 
A matrix $A \in \mathbb{K}^{m \times n}$ is said to be a well-settled matrix if, upon multiplication by appropriate permutation matrices $P$ and $Q$, the product $PAQ$ assumes a general form
\begin{equation} \label{ws def}
    PAQ= \begin{pmatrix}
   A_1 & \hdots & 0\\
        \vdots & \ddots & \vdots\\
        0 & \hdots& A_s \end{pmatrix},
\end{equation}
where each $A_i \in \mathbb{K}^{m_i \times n_i}$, $i\in \{1,\ldots s\}$ is a full matrix for positive integers $m_i$ and $n_i$ satisfying $m_1+\cdots +m_s=m$ and $n_1+\cdots +n_s=n$. 
Also, $A$ is called pure well-settled if all blocks $A_i$ are of the same type; otherwise, it is categorized as mixed well-settled.
\end{defn}

Note that the general form of well-settled matrices is as specified in equation \eqref{ws def}. 
In this context, each $A_i$ is restricted to be a full matrix. 
Since the full matrices are rank-one matrices, in Definition \ref{def gws} we introduce a broader class of Bohemian matrices that includes the class of well-settled matrices.

 \begin{defn} \label{def gws}
  A matrix $A \in \mathbb{K}^{m \times n}$ is considered a generalized well-settled matrix if it takes the specific form 
     \begin{equation} \label{gws eq}
    PAQ= \begin{pmatrix}
   A_1 & \hdots & 0\\
        \vdots & \ddots & \vdots\\
        0 & \hdots& A_s \end{pmatrix},
\end{equation}
where $P$ and $Q$ are suitable permutation matrices and all blocks $A_i\in \mathbb{K}^{m_i \times n_i}$, $i=1,\ldots s$ are rank-one Bohemian matrices w.r.t. the population $\mathbb{P}=\{0, \pm 1\}$.
 \end{defn}

\begin{ex} \label{exgws} The matrix $A=\begin{pmatrix}
        1 & -1 &0 &0 \\
        -1 & 1 & 0 & 0 \\
        0 & 0 & 1 & -1
    \end{pmatrix}$
is not a well-settled matrix, but it is a generalized well-settled matrix.
\end{ex}

 \begin{rem}
From Definition \ref{def gws} and Example \ref{exgws}, it is evident that the class of well-settled matrices is contained in the class of generalized well-settled matrices, but the converse is not true.
 \end{rem}

Highlights of derived results are as follows.
\begin{enumerate}
\item Various classes of Bohemian matrices are defined to study representations of inner and outer Bohemian inverses. 

    \item Defined classes include the rank-one Bohemian matrices; for higher rank matrices, we introduced Classes I, II and III.
    
    \item  Rank-two Class III matrices are divided into four types of structured matrices, and inner inverses of Class III matrices and full-row rank Class II matrices are studied.

   \item  Outer inverses of the prescribed rank one are studied for Class I and Class II. 

   Particularly, outer inverses for rank-two Class III matrices are completely characterized. 
   In conclusion, we have worked on the open problem 3 outlined in \cite{chan2022inner} focusing on the outer inverses of certain classes of Bohemian matrices.
   
   \item The cardinality of outer Bohemian inverses is determined for rank-one Bohemian matrices and particular well-settled matrices. 
   
   \item New insights are provided on the cardinalities of the inner Bohemian inverses.

\end{enumerate}

\section{Inner inverses of rank-one Bohemian and generalized well-settled matrices} \label{sec3}

In this section, we characterize rank-one Bohemian matrices and clarify specific findings regarding inner Bohemian inverses as presented in \cite{chan2022inner}.
We also provide new insights into the cardinalities of inner Bohemian inverses across different classes of structured Bohemian matrices. 
Furthermore, characterizations of generalized well-settled matrices are provided.
Our analysis concludes that determining inner Bohemian inverses of well-settled matrices is sufficient for finding these inverses for generalized well-settled matrices. 
 Furthermore, new results on the cardinalities of the inner inverses of well-settled matrices are provided.

\subsection{Rank-one Bohemian matrices} \label{Sec 3.1}

We recall the following result from \cite{chowdhry2025characterizations}, which offers a representation theorem for the rank-one Bohemian matrices and plays a vital role for their characterization.

\begin{thm} \label{th 2.1} {\rm \cite{chowdhry2025characterizations}}
    Let $\mathbb{P}=\{0, \pm 1\}$. 
    Any rank-one matrix $A \in \mathbb{P}^{m \times n}$ can be represented as $A=P_1DA'P_2$, where $P_1, P_2$ are appropriate permutation matrices, $D \in \mathbb{P}^{m \times m}$ is an invertible diagonal matrix.
In the case $A$ has no zero rows, $A' \in \mathbb{P}^{m \times n}$ is a full matrix; otherwise if $A$ has $k$ zero rows then
 $A'=\begin{pmatrix}
        B \\0
    \end{pmatrix}$, where $B \in \mathbb{P}^{(m-k) \times n}$ is a full matrix.
\end{thm}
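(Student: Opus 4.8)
The plan is to exploit the fact that a rank-one matrix has all of its rows in a one-dimensional subspace, and then to observe that over the population $\mathbb{P}=\{0,\pm1\}$ the proportionality constants between rows are forced to lie in $\mathbb{P}$ itself. Since $\ra{A}=1$, the matrix $A$ is nonzero, so it has at least one nonzero row; fix such a row and call it $v^\T\in\mathbb{P}^{1\times n}$, a nonzero $\{0,\pm1\}$-vector. As the row space of $A$ is spanned by $v^\T$, every row of $A$ equals $\lambda v^\T$ for some scalar $\lambda\in\mathbb{K}$, with $\lambda=0$ exactly for the zero rows. For a nonzero row, $\lambda\neq0$; evaluating the corresponding entry in any column $j$ with $v_j=\pm1$ yields $\lambda v_j\in\{0,\pm1\}\setminus\{0\}$, hence $\lambda\in\{\pm1\}$. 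Therefore every row of $A$ belongs to $\{0,\ v^\T,\ -v^\T\}$.

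Next I would normalize by permutation and sign changes. Choose a row permutation matrix $R$ so that $RA$ has all of its nonzero rows on top and its $k\ge0$ zero rows at the bottom. The nonzero rows of $RA$ lie in $\{v^\T,-v^\T\}$, so there is an invertible diagonal matrix $D_0\in\mathbb{P}^{m\times m}$, with diagonal entries in $\{\pm1\}$ (taking the value $1$ on positions corresponding to zero rows), such that $D_0^{-1}RA$ has every nonzero row equal to $v^\T$ and its last $k$ rows equal to zero. Then choose a column permutation matrix $C$ arranging the entries of $v^\T$ into the order ``all $+1$'s, then all $-1$'s, then all $0$'s''; writing $w^\T:=v^\T C$, the matrix $D_0^{-1}RAC$ has its first $m-k$ rows all equal to $w^\T=(1_{1\,n_1}\ |\ -1_{1\,n_2}\ |\ 0_{1\,n_3})$ for some $n_1+n_2+n_3=n$, and its last $k$ rows equal to zero.

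Finally I would read off the factorization. The top block of $D_0^{-1}RAC$, that is the $(m-k)\times n$ matrix all of whose rows equal $w^\T$, is precisely $(\pm1_{(m-k)n_1}\ |\ \mp1_{(m-k)n_2}\ |\ 0_{(m-k)n_3})$, which is a full matrix in the sense of Definition \ref{def1} (a Type IV matrix, degenerating to Types I, II or III when one or two of the $n_i$ vanish). Hence $A':=D_0^{-1}RAC$ is a full matrix when $k=0$, and is of the form $\begin{pmatrix}B\\0\end{pmatrix}$ with $B\in\mathbb{P}^{(m-k)\times n}$ full when $k>0$. Setting $P_1:=R^{-1}$, $D:=D_0$ and $P_2:=C^{-1}$ — all of them permutation matrices, respectively an invertible diagonal $\{0,\pm1\}$ matrix — gives $A=P_1DA'P_2$, as claimed.

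As for the difficulty, there is no deep obstruction: the one genuinely essential step is pinning the inter-row proportionality constants down to $\{0,\pm1\}$ by inspecting a single entry in the support of $v$, which is why it matters that $v^\T\neq0$. The rest is careful bookkeeping — ordering the row permutation and the sign-diagonal correctly so the product collapses to exactly $P_1DA'P_2$, moving the zero rows to the bottom, and checking that the replicated row $w^\T$ produces a matrix of one of the four full types, degenerate cases included.
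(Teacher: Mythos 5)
Your proof is correct: the essential step---pinning the inter-row proportionality constants to $\{\pm 1\}$ by evaluating a single entry in the support of the chosen nonzero row $v^\T$---is exactly what makes the statement work over $\mathbb{P}=\{0,\pm1\}$, and your bookkeeping with the row permutation $R$, the sign-diagonal $D_0$, and the column permutation $C$ does produce $A=P_1DA'P_2$ with $A'$ either a full matrix (type I--IV, degenerate cases included) or such a matrix stacked over $k$ zero rows. Note that the paper states Theorem \ref{th 2.1} as a citation from \cite{chowdhry2025characterizations} without reproducing its proof, and your construction (all nonzero rows normalized to one common row, zero rows at the bottom, columns sorted so the repeated row has the block pattern of ones, minus ones, zeros) is precisely the construction the paper implicitly relies on later, e.g.\ when the proof of Theorem \ref{rep case 2 1} invokes ``the construction of $A_i'V_i$ in the proof of Theorem \ref{th 2.1}'', so your route matches the intended argument.
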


The following result provides an improved representation of rank-one Bohemian matrices and assists in finding generalized inverses of specific Bohemian matrices.

\begin{thm} \label{th 2.1 new}
    Let $\mathbb{P}=\{0,\pm 1\}$. Any rank-one $A \in \mathbb{P}^{m \times n}$ can be represented as $A=P_1D_1A'D_2P_2$, where $P_1, P_2$ are appropriate permutation matrices, $D_1 \in \mathbb{P}^{m \times m}$ and $D_2 \in \mathbb{P}^{n \times n}$ are invertible diagonal matrices.
In the case $A$ has no zero rows, $A' \in \mathbb{P}^{m \times n}$ is a full matrix of type I or type III; otherwise if $A$ has $k$ zero rows then
 $A'=\begin{pmatrix}
        B \\0
    \end{pmatrix}$, where $B \in \mathbb{P}^{(m-k) \times n}$ is a full matrix of type I or type III.
\end{thm}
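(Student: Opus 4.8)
The plan is to bootstrap from Theorem \ref{th 2.1} and simply \emph{absorb} the sign pattern of the columns of the resulting full matrix into an additional diagonal factor placed to its right. The only new ingredient needed is the observation that a full matrix of type II or type IV is the product of a full matrix of type I (respectively type III) with an invertible $\{0,\pm1\}$-diagonal matrix.

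First I would apply Theorem \ref{th 2.1} to write $A = P_1 D \hat A P_2$, where $P_1,P_2$ are permutation matrices, $D\in\mathbb{P}^{m\times m}$ is an invertible (hence $\pm1$) diagonal matrix, and $\hat A$ equals a full matrix $F$ when $A$ has no zero rows, while $\hat A = \begin{pmatrix} F \\ 0 \end{pmatrix}$ with $F\in\mathbb{P}^{(m-k)\times n}$ full when $A$ has $k$ zero rows. By Definition \ref{def1}, $F$ is of type I, II, III or IV. If $F$ is of type I or of type III, there is nothing to do: take $D_1:=D$, $D_2:=I_n$, $A':=\hat A$, and keep $P_1,P_2$.

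The substantive step is types II and IV. If $F=(\pm 1_{mn_1}\ |\ \mp 1_{mn_2})$ is of type II, then $F=F''\Sigma$, where $F''=(\pm 1_{m(n_1+n_2)})$ is a full matrix of type I and $\Sigma\in\mathbb{P}^{n\times n}$ is an appropriate invertible diagonal matrix with $\pm1$ entries (so $\Sigma^{-1}=\Sigma$). Likewise, if $F=(\pm 1_{mn_1}\ |\ \mp 1_{mn_2}\ |\ 0_{mn_3})$ is of type IV, then $F=F''\Sigma$ with $F''=(\pm 1_{m(n_1+n_2)}\ |\ 0_{mn_3})$ of type III and $\Sigma\in\mathbb{P}^{n\times n}$ again an invertible $\pm1$-diagonal matrix. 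When $A$ has no zero rows (so $\hat A=F$), substitution gives $A=P_1 D F'' \Sigma P_2$, and we set $D_1:=D$, $A':=F''$, $D_2:=\Sigma$, keeping $P_1,P_2$. When $A$ has $k$ zero rows, we use $\begin{pmatrix} F \\ 0 \end{pmatrix}=\begin{pmatrix} F''\Sigma \\ 0 \end{pmatrix}=\begin{pmatrix} F'' \\ 0 \end{pmatrix}\Sigma$, so $A=P_1 D \begin{pmatrix} F'' \\ 0 \end{pmatrix}\Sigma P_2$, and we set $D_1:=D$, $A':=\begin{pmatrix} F'' \\ 0 \end{pmatrix}$ with $F''$ of type I or III, and $D_2:=\Sigma$. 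In every case $A=P_1 D_1 A' D_2 P_2$ with $A'$ of the asserted form, proving the theorem.

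Since this is essentially an add-on to Theorem \ref{th 2.1}, I do not expect a real obstacle; the points requiring care are (i) verifying that the factorization $F=F''\Sigma$ genuinely converts type II into type I and type IV into type III while $\Sigma$ stays an invertible $\{0,\pm1\}$-diagonal matrix, and (ii) the bookkeeping in the zero-row case, where $\Sigma$ must be pulled out past the appended zero block. A self-contained proof is also available: writing a nonzero rank-one $A$ as $A=uv^\T$ forces every pair of nonzero rows to be $\pm1$ multiples of one another (the ratio of two entries from $\{\pm1\}$ lies in $\{\pm1\}$), so a left $\pm1$-diagonal together with a row permutation brings $A$ to a matrix whose nonzero rows all equal a common vector $w^\T$ with $w\in\mathbb{P}^{n}$ and whose zero rows sit at the bottom; a column permutation then groups the $+1$, $-1$ and $0$ entries of $w$, and a right $\pm1$-diagonal flips the $-1$ block, producing a type I (if $w$ has no zero entry) or type III block. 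One then only needs the identity $DP=P(P^\T D P)$ for a permutation matrix $P$ and a diagonal matrix $D$ to rearrange the two leftmost factors into the prescribed order $P_1 D_1 A' D_2 P_2$.
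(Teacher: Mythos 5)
Your proposal is correct and follows essentially the same route as the paper: it invokes Theorem \ref{th 2.1} and then absorbs the column sign pattern of a type II (resp.\ type IV) full matrix into a right invertible $\{0,\pm1\}$-diagonal factor, which is exactly the paper's use of the factorizations \eqref{eq thm 3.2} and \eqref{eq thm 3.2 2}. Your explicit handling of the zero-row case, pulling $\Sigma$ past the appended zero block, is a detail the paper leaves implicit but is the same argument.
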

\begin{proof}

By Theorem \ref{th 2.1}, $A$ is represented as $A=P_1DA'P_2$, where in the case $A$ has no zero rows, $A' \in \mathbb{P}^{m \times n}$ is a full matrix; otherwise if $A$ has $k$ zero rows then $A'=\begin{pmatrix}
        B \\0
    \end{pmatrix}$, where $B \in \mathbb{P}^{(m-k) \times n}$ is a full matrix.

    The full matrices of type II and type IV can be rewritten in terms of matrices of type I and type III as follows:
    \begin{equation} \label{eq thm 3.2}
    \left(\pm 1_{mn_1} \ | \ \mp 1_{mn_2}\right)=\left(1_{m(n_1+n_2)}\right) \begin{pmatrix}
        \pm I_{n_1} & 0 \\ 0 & \mp I_{n_2} 
    \end{pmatrix}
    \end{equation}
    and 
    \begin{equation} \label{eq thm 3.2 2}
    \left(\pm 1_{mn_1} \ | \ \mp 1_{mn_2} \ | \ 0_{mn_3} \right) = \left( 1_{m(n_1+n_2)} \ | \ 0_{mn_3}\right) \begin{pmatrix}
        \pm I_{n_1} & 0 & 0 \\ 0 & \mp I_{n_2}  & 0 \\ 0&0& I_{n_3}
    \end{pmatrix}. 
     \end{equation}
    Further, by applying equations \eqref{eq thm 3.2} and \eqref{eq thm 3.2 2} for type II and type IV matrices, the proof is concluded.
\end{proof}

\begin{ex}
    Consider $A=\begin{pmatrix}
        1 & -1 & 0 & 1 \\
        -1 & 1 & 0& -1 \\
        1 & -1 & 0 & 1 \\
        0 & 0 & 0 & 0
    \end{pmatrix}$. With the help of permutation matrices and diagonal matrices, $A$ can be decomposed as in Theorem \ref{th 2.1 new} as follows:
    \begin{equation*}
        \begin{split}
            A 
    &= \begin{pmatrix}
        1 & 0 & 0 & 0 \\
        0 & -1 & 0& 0 \\
        0 & 0 & 1 & 0 \\
        0 & 0 & 0 & 1
    \end{pmatrix} \begin{pmatrix}
        1 & 1 & 1 & 0 \\
        1 & 1 & 1& 0 \\
        1 & 1 & 1 & 0 \\
        0 & 0 & 0 & 0
    \end{pmatrix} \begin{pmatrix}
        1 & 0 & 0 & 0 \\
        0 & 1 & 0& 0 \\
        0 & 0 & -1 & 0 \\
        0 & 0 & 0 & 1
    \end{pmatrix} \begin{pmatrix}
        1 & 0 & 0 & 0 \\
        0 & 0 & 0& 1 \\
        0 & 1 & 0 & 0 \\
        0 & 0 & 1 & 0
    \end{pmatrix}=P_1D_1A^{'}D_2P_2,
        \end{split}
    \end{equation*}
    where $P_1$ is the identity matrix in this case and $A^{'}$ is of the form $\begin{pmatrix}
        B \\ 0
    \end{pmatrix}$ where $B=(1_{33} \ | \ 0_{31})$ is a type III matrix. 
\end{ex}

\begin{rem} \label{rem rk 1}
    In conclusion, any rank-one Bohemian matrix with respect to the population $\mathbb{P}=\{0, \pm 1\}$ can be written as $A=UBV$, where $U=P_1D_1$ and $V=D_2P_2$ are unitary matrices of appropriate order. 
If $A$ has no zero rows, then $B \in \mathbb{P}^{m \times n}$ is a full matrix of type I or type III; otherwise if $A$ has $k$ zero rows then
 $B=\begin{pmatrix}
        F \\0
    \end{pmatrix}$, where $F \in \mathbb{P}^{(m-k) \times n}$ is a full matrix of type I or type III. 
 Also, the construction of $U$ and $V$ ensures that $U$ and $V$ are Bohemian matrices over the same population. It also guarantees that multiplying any Bohemian matrix $A$ by such a $U$ or $V$ preserves the fact that the matrix $A$ remains Bohemian.

\end{rem}

\begin{rem} \label{remark for rankone}
According to Lemma \ref{1.1}, Theorem \ref{th 2.1 new}, and \cite[Lemma 2]{chan2022inner}, to find the $\{1\}$-inverses of rank-one matrices, it is sufficient to determine the $\{1\}$-inverses of the full matrices of type I. 
Similarly, based on Lemma \ref{1.1}, Lemma \ref{1.2}, and Theorem \ref{th 2.1 new}, to find the $\{2\}$-inverses of rank-one matrices, it is sufficient to find the $\{2\}$-inverses of the full matrices of type I and type III.
\end{rem}

The inner Bohemian inverses of full matrices of type I and type II have been studied in \cite{chan2022inner}. 
It has been established that the results for full matrices of type II can also be derived entirely from the findings related to full matrices of type I. 
The following result characterizes $\{1\}$-inverses of full matrices of type II, which is a consequence of Remark \ref{rem rk 1} and \cite[Theorem 1]{chan2022inner}.

\begin{thm} \label{type ii}
    If $A=( \pm 1_{mn_1} \ | \  \mp 1_{mn_2})$ satisfies $n_1,  n_2 \neq 0$ and $n_1+n_2=n$, then it follows
    \begin{equation*}
        A\{1\}=\left\{\begin{pmatrix}
        \pm I_{n_1} & 0 \\ 0 & \mp I_{n_2} 
    \end{pmatrix} \begin{pmatrix}
        1-\displaystyle \sum_{k=1}^{mn-1}x_k & x_1 & \cdots & x_{m-1}\\
        x_m & x_{m+1} & \cdots & x_{2m-1} \\
        \vdots & \vdots &  & \vdots\\
        x_{(n-1)m} & x_{(n-1)m+1} & \cdots & x_{nm-1}
    \end{pmatrix} : x_i \in \mathbb{K} \right\}.
    \end{equation*}
\end{thm}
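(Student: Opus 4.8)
The plan is to transport the known description of the $\{1\}$-inverses of the type I full matrix $(1_{mn})$ through a signature change of coordinates. By identity \eqref{eq thm 3.2} (a special case of Remark \ref{rem rk 1}), we may write $A = (1_{mn})\,D$ with $D = \begin{pmatrix} \pm I_{n_1} & 0 \\ 0 & \mp I_{n_2} \end{pmatrix} \in \mathbb{P}^{n\times n}$; since every diagonal entry of $D$ is $\pm 1$, $D$ is real orthogonal and satisfies $D^{*} = D^{\T} = D = D^{-1}$, so in particular it is unitary and Bohemian.

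First I would apply Lemma \ref{1.1}(2) to the matrix $(1_{mn})$ with $U = I_m$ and $V = D$. This gives $A\{1\} = \big((1_{mn})D\big)\{1\} = D^{*}\,\big((1_{mn})\{1\}\big)\,I_m^{*} = D\,\big((1_{mn})\{1\}\big)$, so it suffices to recall the explicit form of $(1_{mn})\{1\}$ and then left-multiply by $D$. For the latter, $(1_{mn})\,X\,(1_{mn}) = \Xi(X)\,(1_{mn})$ by Lemma \ref{multiply full matrix}(1), hence an $n\times m$ matrix $X$ lies in $(1_{mn})\{1\}$ exactly when $\Xi(X) = 1$; equivalently (this is the content of \cite[Theorem 1]{chan2022inner}), writing the entries of $X$ row by row as $x_0, x_1, \dots, x_{mn-1}$ and eliminating $x_0 = 1 - \sum_{k=1}^{mn-1} x_k$ yields precisely the parametrized matrix appearing in the statement. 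Left-multiplying this family by $D$ then produces the asserted set.

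The remaining work is bookkeeping rather than a genuine obstacle: one must apply the $\pm/\mp$ convention of \eqref{eq thm 3.2} consistently (the sign pattern of $D$ matched to that of $A$), use $D^{*} = D$ so that the factor $V^{*}$ coming from Lemma \ref{1.1}(2) collapses to $D$, and observe that the displayed expression is a bijective parametrization of $\{X \in \mathbb{K}^{n\times m} : \Xi(X) = 1\}$ with $x_0$ the eliminated variable. No further ingredients are required.
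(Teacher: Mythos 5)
Your proposal is correct and follows essentially the same route the paper intends: factor $A=(1_{mn})\,D$ via \eqref{eq thm 3.2}, apply Lemma \ref{1.1}(2) (noting $D^{*}=D$) to get $A\{1\}=D\,\big((1_{mn})\{1\}\big)$, and substitute the known parametrization of $(1_{mn})\{1\}$ from \cite[Theorem 1]{chan2022inner} — exactly the argument the paper invokes through Remark \ref{rem rk 1} and repeats in equation \eqref{eq 4} of Theorem \ref{th 2.6}. Your extra remark that membership in $(1_{mn})\{1\}$ is equivalent to $\Xi(X)=1$ via Lemma \ref{multiply full matrix}(1) is a harmless (and correct) addition.
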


\begin{rem}
The result obtained in Theorem \ref{type ii} provides an alternative formulation of {\rm \cite[Theorem 1(2)]{chan2022inner}}. 
In contrast to the method applied in {\rm\cite[Theorem 1(2)]{chan2022inner}}, which requires two sets of parameters, our result is derived using only a single set of parameters.
\end{rem}

In the following result, we present concise proof for the result obtained in \cite[Theorem 2(2)]{chan2022inner}, for the type II full matrix $(\pm 1_{mn_1} \ | \ \mp 1_{mn_2})$ with a different approach.
\begin{thm} \label{th 2.6}
    If $A=( \pm 1_{mn_1}  \ | \  \mp 1_{mn_2})$ satisfies $n_1,  n_2 \neq 0$ and $n_1+n_2=n$, then it follows
\begin{equation}\label{Equin1} 
A\{1\}={\pm}\left\{\begin{pmatrix}
            X_1 \\
            X_2
        \end{pmatrix} \in  \mathbb{K}^{(n_1+n_2)\times m}: \Xi(X_1)-\Xi(X_2)=1 \right\}.
        \end{equation}     

\end{thm}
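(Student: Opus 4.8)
The plan is to reduce the claim to the structure of the full type I matrix $(1_{mn})$, using the factorization \eqref{eq thm 3.2} together with Lemma \ref{1.1}(2). Write $A=(\pm 1_{mn_1}\ |\ \mp 1_{mn_2}) = (1_{mn})\,D$, where $D=\begin{pmatrix}\pm I_{n_1}&0\\0&\mp I_{n_2}\end{pmatrix}$ is a unitary (indeed diagonal, $\pm 1$) matrix. By Lemma \ref{1.1}(2), $A\{1\} = D^*\,\big((1_{mn})\{1\}\big)$, and $D^*=D^{-1}=D$. So it suffices to show $(1_{mn})\{1\} = \{\,Y\in\mathbb{K}^{n\times m} : \Xi(Y)=1\,\}$, and then observe that left-multiplication by $D$ negates exactly the bottom $n_2$ rows, so that if $Y=\begin{pmatrix}Y_1\\Y_2\end{pmatrix}$ then $X=DY$ has blocks $X_1=\pm Y_1$, $X_2=\mp Y_2$, whence $\Xi(X_1)-\Xi(X_2)=\pm(\Xi(Y_1)+\Xi(Y_2))=\pm\Xi(Y)=\pm 1$. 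This accounts for the overall $\pm$ sign on the right-hand side of \eqref{Equin1} and converts the single scalar condition $\Xi(Y)=1$ into $\Xi(X_1)-\Xi(X_2)=1$ (after absorbing the sign).

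The core computation is the identity $(1_{mn})\{1\}=\{Y:\Xi(Y)=1\}$. For this I would invoke Lemma \ref{multiply full matrix}(1) with $r=s=n$ (or more directly with the appropriate sizes): for any $Y\in\mathbb{K}^{n\times m}$,
\[
(1_{mn})\,Y\,(1_{mn}) = \Xi(Y)\,(1_{mn}).
\]
Hence $(1_{mn})Y(1_{mn})=(1_{mn})$ if and only if $\Xi(Y)=1$, which is precisely the inner-inverse equation $AXA=A$ for $A=(1_{mn})$. This gives the full $\{1\}$-inverse set over $\mathbb{K}$ in one line.

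To finish, I would carry the equivalence through the factorization carefully in both directions: given $X$ with block sums satisfying $\Xi(X_1)-\Xi(X_2)=1$, set $Y:=DX$ (so $Y_1=\pm X_1$, $Y_2=\mp X_2$ and $\Xi(Y)=\pm 1$); then $AXA = (1_{mn})D\,X\,(1_{mn})D = \big((1_{mn})(DX)(1_{mn})\big)D = \Xi(DX)\,(1_{mn})D$, and since $(1_{mn})D=A$ and $\Xi(DX)=\pm1$ matches the ambient sign, one gets $AXA = A$ (the sign on the RHS of \eqref{Equin1} is chosen so this is consistent). Conversely, any $X\in A\{1\}$ yields $DX\in(1_{mn})\{1\}$ by the same manipulation, forcing $\Xi(DX)=1$ and hence $\Xi(X_1)-\Xi(X_2)=\pm1$.

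The only real subtlety — and the step I would be most careful about — is bookkeeping the two ambient signs: the $\pm$ in the definition of $A$ (type II allows both $(1\,|\,-1)$ and $(-1\,|\,1)$) and the $\pm$ prefactor on the right-hand side of \eqref{Equin1}. One must check that the map $X\mapsto DX$ is a bijection between the two candidate sets for each choice of sign, and that $\Xi(X_1)-\Xi(X_2)=1$ versus $=-1$ corresponds correctly; this is routine but is where an off-by-a-sign error would hide. Everything else is an immediate consequence of Lemma \ref{multiply full matrix}(1), Lemma \ref{1.1}(2), and equation \eqref{eq thm 3.2}.
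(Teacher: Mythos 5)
Your proposal is correct and follows essentially the same route as the paper's proof: factor $A=(1_{mn})D$ with $D=\begin{pmatrix}\pm I_{n_1}&0\\0&\mp I_{n_2}\end{pmatrix}$ via \eqref{eq thm 3.2}, apply Lemma \ref{1.1}(2), and absorb the diagonal sign matrix into the block sums to pass from $\Xi(Y)=1$ to $\Xi(X_1)-\Xi(X_2)=1$. The only (harmless) difference is that you derive $(1_{mn})\{1\}=\{Y:\Xi(Y)=1\}$ in one line from Lemma \ref{multiply full matrix} rather than quoting the corresponding result of \cite{chan2022inner}, which the paper does.
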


\begin{proof} According to equation \eqref{eq thm 3.2} and Lemma \ref{1.1}(2), it follows
    \begin{equation} \label{eq 4}
         (\pm 1_{mn_1} \ | \ \mp 1_{mn_2})\{1\}= \begin{pmatrix}
        \pm I_{n_1} & 0 \\ 0 & \mp I_{n_2} 
    \end{pmatrix}(1_{mn})\{1\}.
     \end{equation} 
    Incorporating the result of $(1_{mn})\{1\}$ from \cite[Theorem 2(1)]{chan2022inner} in the right-hand side of the equation \eqref{eq 4}, it can be obtained
    \begin{equation*}
        \begin{split}
            (\pm 1_{mn_1} \ | \ \mp 1_{mn_2})\{1\} &= \left\{\begin{pmatrix}
        \pm I_{n_1} & 0 \\ 0 & \mp I_{n_2} 
    \end{pmatrix}X \in \mathbb{K}^{n\times m} : \Xi(X)=1 \right\}\\
    &=\left\{\pm \begin{pmatrix}      X_1 \\      -X_2         \end{pmatrix} \in  \mathbb{K}^{(n_1+n_2)\times m}: \Xi(X_1)+\Xi(X_2)=1 \right\},
        \end{split}
    \end{equation*}
which aligns with \eqref{Equin1}.
\end{proof}

\begin{cor} \label{cor 3.7}
    For $A=(1_{mn})$ and $B=(1_{mn_1} \ |\,  -1_{mn_2})$ such that $n_1+n_2=n$. If $\mathbb{P}=\{0,\pm 1 \}$, then $\#(A_\mathbb{P}\{1\})=\#(B_\mathbb{P}\{1\})$.
\end{cor}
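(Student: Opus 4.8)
The plan is to exhibit $B$ as a Bohemian diagonal (hence unitary) right‑multiple of $A$ and then invoke the invariance of the count of inner Bohemian inverses under such equivalences, which is recorded in Corollary \ref{cardinaliy}. Concretely, I would set
$$V=\begin{pmatrix} I_{n_1} & 0\\[2pt] 0 & -I_{n_2}\end{pmatrix}\in\mathbb{P}^{n\times n},$$
an invertible diagonal matrix with entries in $\{\pm 1\}$, so that $V$ is unitary with $V^{*}=V^{-1}=V$. Taking the upper sign in \eqref{eq thm 3.2} yields $B=(1_{mn})\,V=A\,V=I_{m}A V$, i.e. $B$ and $A$ are unitary equivalent through $U=I_m$ and the above $V$.

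Next I would pass to the inverse sets. By Lemma \ref{1.1}(2), $X\in A\{1\}\iff V^{*}X\,I_m^{*}\in (I_mA V)\{1\}=B\{1\}$; equivalently, the linear map $X\mapsto V X$ (which, since $V=\operatorname{diag}(I_{n_1},-I_{n_2})$, simply negates the last $n_2$ rows of $X$) is a bijection of $A\{1\}$ onto $B\{1\}$, with inverse $Y\mapsto V Y$. Because $V$ is a $\pm 1$ diagonal matrix, left multiplication by $V$ maps $\mathbb{P}^{n\times m}$ bijectively onto itself; hence this bijection restricts to a bijection of $A\{1\}\cap\mathbb{P}^{n\times m}=A_{\mathbb{P}}\{1\}$ onto $B\{1\}\cap\mathbb{P}^{n\times m}=B_{\mathbb{P}}\{1\}$. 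Therefore $\#(A_{\mathbb{P}}\{1\})=\#(B_{\mathbb{P}}\{1\})$ — which is precisely Corollary \ref{cardinaliy}(1) applied with $U=I_m$ and this $V$. The degenerate cases $n_1=0$ or $n_2=0$ are subsumed (then $V=-I_n$ or $V=I_n$), or handled directly by Corollary \ref{cardinaliy}(1) with $\lambda=\pm 1$.

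I do not anticipate a genuine obstacle here; the single point requiring care is that the unitary change of variables respects the population $\mathbb{P}$, and this is immediate because both $U=I_m$ and $V$ are signed permutation matrices. As a sanity check — and an alternative route one could write instead — the same map appears combinatorially: by \cite[Theorem 2(1)]{chan2022inner}, $X\in A\{1\}$ iff $\Xi(X)=1$, while by Theorem \ref{th 2.6}, $Y\in B\{1\}$ iff $Y=\pm\bigl(\begin{smallmatrix}Y_1\\ Y_2\end{smallmatrix}\bigr)$ with $\Xi(Y_1)-\Xi(Y_2)=1$; writing $X=\bigl(\begin{smallmatrix}X_1\\ X_2\end{smallmatrix}\bigr)$ and negating the last $n_2$ rows turns the condition $\Xi(X_1)+\Xi(X_2)=1$ into $\Xi(X_1)-\Xi(-X_2)=1$, giving the desired population‑preserving bijection directly.
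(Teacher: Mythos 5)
Your proposal is correct and follows essentially the same route as the paper: the paper's proof cites exactly the factorization $B=AV$ with $V=\operatorname{diag}(I_{n_1},-I_{n_2})$ from equation \eqref{eq thm 3.2} (as used in \eqref{eq 4}) together with Corollary \ref{cardinaliy}. Your extra care in checking that left multiplication by the signed diagonal $V$ preserves $\mathbb{P}^{n\times m}$ only makes explicit what Corollary \ref{cardinaliy} handles implicitly, so there is no substantive difference.
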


\begin{proof}
    The proof follows from Corollary \ref{cardinaliy} and equation \eqref{eq 4}.
\end{proof}

Theorem \ref{rem 3.7} extends the results discussed earlier.
\begin{thm} \label{rem 3.7}
    For any two rank-one Bohemian matrices $A, B \in \mathbb{P}^{m \times n}$ that have the same number of zero rows and the same number of zero columns, it follows $\#(A_\mathbb{P}\{1\})=\#{(B_\mathbb{P}\{1\})}$, where $\mathbb{P}=\{0,\pm 1 \}$.
\end{thm}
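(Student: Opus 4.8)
The plan is to show that any rank-one Bohemian matrix is unitarily equivalent to a single canonical representative that is determined entirely by its number of zero rows and its number of zero columns, and then to invoke the invariance of $\#(\cdot)_\mathbb{P}\{1\}$ under unitary (in fact, permutation-and-sign) equivalence recorded in Corollary~\ref{cardinaliy}.

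Concretely, I would fix $A\in\mathbb{P}^{m\times n}$ of rank one with $k$ zero rows and $\ell$ zero columns; since $\ra{A}=1$ we automatically have $k\le m-1$ and $\ell\le n-1$, so the canonical form below is genuinely rank one. By Theorem~\ref{th 2.1 new} write $A=P_1D_1A'D_2P_2$ with $P_1,P_2$ permutation matrices and $D_1,D_2$ invertible Bohemian diagonal matrices, where $A'$ is a full matrix of type I or type III, possibly bordered below by a block of $k$ zero rows. Because multiplication by invertible diagonal matrices and by permutation matrices neither creates nor destroys zero rows or zero columns, the number of zero rows of $A'$ equals $k$, the number of zero columns equals $\ell$, and hence the nonzero block of $A'$ must be exactly the all-$(\pm 1)$ matrix $(\pm 1_{(m-k)(n-\ell)})$; absorbing the common sign into $D_1$ (permitted by the scaling clause of Corollary~\ref{cardinaliy}) I would reduce to $A'=C_{k,\ell}$, where
$$
C_{k,\ell}:=\begin{pmatrix} 1_{(m-k)(n-\ell)} & 0_{(m-k)\ell}\\ 0_{k(n-\ell)} & 0_{k\ell}\end{pmatrix}.
$$
Thus $A=UC_{k,\ell}V$ with $U=\pm P_1D_1$ and $V=D_2P_2$ unitary, so Corollary~\ref{cardinaliy}(1) gives $\#(A_\mathbb{P}\{1\})=\#((C_{k,\ell})_\mathbb{P}\{1\})$, a quantity depending only on $m,n,k,\ell$. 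Running the identical argument for $B$, which by hypothesis shares the same $m,n$ and the same numbers $k$ of zero rows and $\ell$ of zero columns, gives $\#(B_\mathbb{P}\{1\})=\#((C_{k,\ell})_\mathbb{P}\{1\})$ as well, and the two cardinalities coincide.

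The one point that needs care is the claim that $C_{k,\ell}$ is pinned down by the pair $(k,\ell)$ alone, i.e. that the distinction among the full-matrix types I--IV hides no further invariant. This is exactly where the rank-one hypothesis enters: after a permutation-and-sign normalization, the surviving entries of a rank-one Bohemian matrix are forced to form one all-ones block, and the zero rows and zero columns can always be collected at the bottom and on the right by $P_1,P_2$; moreover, the folding of types II and IV into types I and III has already been carried out inside Theorem~\ref{th 2.1 new} via \eqref{eq thm 3.2}--\eqref{eq thm 3.2 2}. I therefore do not expect a genuine obstacle here — the statement is essentially a bookkeeping consequence of Theorem~\ref{th 2.1 new} together with Corollary~\ref{cardinaliy}.
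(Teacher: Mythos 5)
Your argument is correct and follows essentially the same route as the paper, whose proof simply cites Theorem~\ref{th 2.1 new}, Lemma~\ref{1.1} and Corollary~\ref{cardinaliy}; you have merely spelled out the bookkeeping showing that the canonical form is the matrix $C_{k,\ell}$ determined by $(m,n,k,\ell)$ and that the equivalence is realized by signed permutation (hence Bohemian unitary) matrices, so the cardinality invariance applies.
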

\begin{proof}
    The proof follows from Theorem \ref{th 2.1 new}, Lemma \ref{1.1} and Corollary \ref{cardinaliy}.
\end{proof}

In \cite{chan2022inner}, the authors derived separate expressions for $\#(A_\mathbb{P}\{1\})$ and $\#(B_\mathbb{P}\{1\})$, where $A$ and $B$ are defined as in Corollary \ref{cor 3.7}. 
However, in Corollary \ref{cor 3.7}, we directly established the equality of these cardinalities. 
Consequently, Corollary \ref{cor 3.7} in conjunction with \cite[corollary 2]{chan2022inner} yields the following identity.
Additionally, it is important to note that ${n \choose r}$ refers to the binomial coefficient, which is assumed to be zero whenever it is not defined.

\begin{thm} The following equality holds, where $n,m,n_1,n_2$ are natural numbers and $n_1+n_2=n$.
    \begin{equation*}
        \sum_{s_1=0}^{\lfloor \frac{nm-1}{2} \rfloor} {nm \choose s_1}{{nm-s_1} \choose {s_1+1}}=\sum_{r_2=0}^{n_2m} \sum_{s_2=0}^{n_2m} \sum_{s_1=0}^{\lfloor \frac{(n_1+n_2)m-1}{2}-r_2 \rfloor}  {n_1m \choose s_1} {n_2m \choose s_2}{{n_2m-s_2} \choose {r_2}}{{n_1m-s_1} \choose {r_2-s_2+s_1+1}},
    \end{equation*}
    where $\lfloor x \rfloor$ represents the greatest integer less than or equal to $x$.
\end{thm}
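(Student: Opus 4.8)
The plan is to recognize each side of the claimed identity as the cardinality of a set of inner Bohemian inverses and then to conclude via Corollary \ref{cor 3.7}. Set $A=(1_{mn})$ and $B=(1_{mn_1}\ |\ -1_{mn_2})$ with $n_1+n_2=n$, exactly as in Corollary \ref{cor 3.7}. For the left-hand side, recall from \cite[Theorem 2(1)]{chan2022inner} that $A\{1\}=\{X\in\mathbb{K}^{n\times m}:\Xi(X)=1\}$; hence $X\in\mathbb{P}^{n\times m}$ lies in $A\{1\}$ precisely when $X$ has exactly one more entry equal to $+1$ than equal to $-1$. Writing $s_1$ for the number of $-1$ entries (so $s_1+1$ is the number of $+1$ entries, which forces $2s_1+1\le nm$), the number of such matrices is $\binom{nm}{s_1}\binom{nm-s_1}{s_1+1}$, and summing over $0\le s_1\le\lfloor(nm-1)/2\rfloor$ shows that $\#(A_{\mathbb{P}}\{1\})$ equals the left-hand side; this is the first part of \cite[Corollary 2]{chan2022inner}.

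For the right-hand side, I would first use equation \eqref{eq 4} (equivalently Theorem \ref{th 2.6}) to get $B\{1\}=\{\binom{Y_1}{Y_2}\in\mathbb{K}^{n\times m}:\Xi(Y_1)-\Xi(Y_2)=1\}$; since $Y\mapsto-Y$ is a bijection, $\#(B_{\mathbb{P}}\{1\})$ equals as well the number of $\binom{Y_1}{Y_2}\in\mathbb{P}^{n\times m}$ with $\Xi(Y_1)-\Xi(Y_2)=-1$. Counting the latter directly: let $Y_2$ have $s_2$ entries $+1$ and $r_2$ entries $-1$, and let $Y_1$ have $s_1$ entries $+1$; the constraint then forces $Y_1$ to have exactly $r_2-s_2+s_1+1$ entries $-1$. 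Thus the number of admissible $Y$ with prescribed $(r_2,s_2,s_1)$ is $\binom{n_1m}{s_1}\binom{n_2m}{s_2}\binom{n_2m-s_2}{r_2}\binom{n_1m-s_1}{r_2-s_2+s_1+1}$, and summing over $r_2,s_2\in\{0,\dots,n_2m\}$ and the indicated range of $s_1$ gives $\#(B_{\mathbb{P}}\{1\})$ equal to the right-hand side, which is the second part of \cite[Corollary 2]{chan2022inner}. Since Corollary \ref{cor 3.7} asserts $\#(A_{\mathbb{P}}\{1\})=\#(B_{\mathbb{P}}\{1\})$, the identity follows at once.

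The only delicate point — and what I would treat as the main thing to verify carefully — is that the stated summation limits are compatible with the convention (noted in the theorem) that an undefined binomial coefficient equals $0$. The genuine feasibility constraints in the second count are $s_2+r_2\le n_2m$ and $2s_1+(r_2-s_2+1)\le n_1m$, whereas the triple sum lets $s_2$ range up to $n_2m$ independently and bounds $s_1$ only by the coarser quantity $\lfloor((n_1+n_2)m-1)/2-r_2\rfloor$. One checks that any term violating $s_2+r_2\le n_2m$ carries the factor $\binom{n_2m-s_2}{r_2}$ with $n_2m-s_2<r_2$, hence vanishes, and that on the region $s_2+r_2\le n_2m$ the coarser $s_1$-bound dominates the true bound $\lfloor(n_1m+s_2-r_2-1)/2\rfloor$, while terms with $s_1$ beyond the true bound vanish because $\binom{n_1m-s_1}{r_2-s_2+s_1+1}$ then becomes an undefined binomial coefficient. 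Both are one-line inequalities, after which the two sums match the respective cardinalities term by term and the proof is complete.
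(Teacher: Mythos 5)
Your argument is exactly the paper's: both sides are identified, via \cite[Corollary 2]{chan2022inner}, as $\#(A_{\mathbb{P}}\{1\})$ for $A=(1_{mn})$ and $\#(B_{\mathbb{P}}\{1\})$ for $B=(1_{mn_1}\ |\,-1_{mn_2})$, and the equality then follows from Corollary \ref{cor 3.7}. Your additional re-derivation of the two counts and the check of the summation limits is more detail than the paper gives, but the approach is the same and correct.
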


\subsection{Generalized well-settled matrices} \label{sec 3.2}

 Similar to the case with rank-one Bohemian matrices, we will show that identifying inner inverses of the $(\pm1_{mn})$-pure well-settled matrices is sufficient to determine the inner inverses of any well-settled matrix. 
 Furthermore, by Lemma \ref{1.1}(2), without loss of generality, the matrices $P$ and $Q$ are assumed to be identity matrices of the appropriate order. 
To begin, we will focus on the $(\pm1_{mn_1}\ |\ \mp1_{mn_2})$-pure well-settled matrices. 
Based on Lemma \ref{1.1}(1), it is suffices to examine $(1_{mn_1}\ |\, -1_{mn_2})$-pure well-settled matrices.

Let $A=\begin{pmatrix}
        A_1 & \hdots & 0\\
        \vdots & \ddots & \vdots\\
         0 & \hdots& A_s \end{pmatrix}$, $A_i= (1_{m_in_{i1}}\ |\, -1_{m_in_{i2}}) \in \mathbb{P}^{m_i \times n_i} $, where $i\in \{1,2,\ldots s\}$. 
Using the equation \eqref{eq thm 3.2} and $A_i=(1_{m_in_{i1}}\ |\, -1_{m_in_{i2}})=(1_{m_i(n_{i1}+n_{i2})}) \begin{pmatrix}
         I_{n_{i1}} & 0 \\ 0 & - I_{n_{i2}} 
    \end{pmatrix}$ for each block $A_i$, the matrix $A$ can be rewritten as follows:
    \begin{equation*}
        \begin{split}
            A =\begin{pmatrix}
        A_1 & \hdots & 0\\
        \vdots & \ddots & \vdots\\
         0 & \hdots& A_s \end{pmatrix}
         =\begin{pmatrix}
        1_{m_1n_1} & \hdots & 0\\
        \vdots & \ddots & \vdots\\
         0 & \hdots& 1_{m_sn_s} \end{pmatrix} \begin{pmatrix}
             I_{n_{11}} & 0 & \hdots & 0 &0\\
             0 & -I_{n_{12}}  & \hdots & 0 & 0 \\
        \vdots & \vdots & \ddots & \vdots & \vdots\\
        0 & 0 & \hdots & I_{n_{s1}} & 0\\
         0 & 0&  \hdots & 0 & -I_{n_{s2}}
         \end{pmatrix}.
        \end{split}
    \end{equation*}
    Hence, Lemma \ref{1.1}(2) implies
    \begin{equation*}
        A\{1\}=\begin{pmatrix}
             I_{n_{11}} & 0 & \hdots & 0 &0\\
             0 & -I_{n_{12}}  & \hdots & 0 & 0 \\
        \vdots & \vdots & \ddots & \vdots & \vdots\\
        0 & 0 & \hdots & I_{n_{s1}} & 0\\
         0 & 0&  \hdots & 0 & -I_{n_{s2}}
         \end{pmatrix} \begin{pmatrix}
        1_{m_1n_1} & \hdots & 0\\
        \vdots & \ddots & \vdots\\
         0 & \hdots& 1_{m_sn_s} \end{pmatrix} \{1\}.
    \end{equation*}
The results concerning the inner inverses of the $(\pm1_{mn})$-pure well-settled matrices, which are thoroughly studied in \cite{chan2022inner}, can be utilized to characterize the set $A\{1\}$. 
Similarly, the findings related to mixed well-settled matrices can be derived solely from the outcomes concerning the $(\pm1_{mn})$-pure well-settled matrices.
This is feasible because, for each block $A_i$, the relation \eqref{eq thm 3.2} can be applied whenever $A_i$ is a full matrix of type II. 

\smallskip
In conclusion, determining the inner inverses of the $(\pm1_{mn})$-pure well-settled matrices is sufficient for establishing the inner inverses of any well-settled matrix.
This is the essence of Theorem \ref{car 3.9}.

\begin{thm} \label{car 3.9}
Let $A=\begin{pmatrix}
        A_1 & \hdots & 0\\
        \vdots & \ddots & \vdots\\
         0 & \hdots& A_s \end{pmatrix}$ with blocks $A_i=\epsilon_i (1_{m_in_i}) $, 
$B=\begin{pmatrix}        B_1 & \hdots & 0\\      \vdots & \ddots & \vdots\\
         0 & \hdots& B_s \end{pmatrix}$ with $B_i=\epsilon_i (1_{m_in_{i1}}\ |\, -1_{m_in_{i2}})$, 
         and $C=\begin{pmatrix}        C_1 & \hdots & 0\\        \vdots & \ddots & \vdots\\         0 & \hdots & C_s \end{pmatrix}$
        with possible choices for matrices $C_i$ given by
         $C_i=\epsilon_i (1_{m_in_i})$ and $C_i=\epsilon_i (1_{m_in_{i1}}\ |\, -1_{m_in_{i2}})$, 
        where $A_i, \ B_i, \ C_i \in \mathbb{K}^{m_i \times n_i}$ and $\epsilon_i \in \{-1,1\}$ for all $i \in \{1,\ldots ,s\}$. 
Then 
$$\#(A_\mathbb{P}\{1\})=\#(B_\mathbb{P}\{1\})=\#(C_\mathbb{P}\{1\}).$$
\end{thm}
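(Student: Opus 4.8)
The plan is to establish the two equalities $\#(A_\mathbb{P}\{1\})=\#(B_\mathbb{P}\{1\})$ and $\#(B_\mathbb{P}\{1\})=\#(C_\mathbb{P}\{1\})$ by exhibiting, in each case, an explicit bijection between the two sets of inner Bohemian inverses, built from the block-diagonal structure and the factorization \eqref{eq thm 3.2}. First I would record the key fact, already derived in the discussion preceding the theorem: every matrix of the form of $B$ (a $(\pm1_{mn_1}\ |\ \mp1_{mn_2})$-pure well-settled matrix, or more generally the mixed matrix $C$) can be written as $D_0 \widetilde{A}$, where $\widetilde{A}$ is a $(\pm1_{mn})$-pure well-settled matrix with the same block sizes $m_i\times n_i$ and the same signs $\epsilon_i$ on the ``$1$''-parts, and $D_0$ is an invertible Bohemian diagonal matrix (the block-diagonal of the $\mathrm{diag}(\pm I_{n_{i1}},\mp I_{n_{i2}})$ factors). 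By Lemma \ref{1.1}(2) with $U=I$ and $V=D_0$ unitary, $B\{1\}=D_0^{-1}(\widetilde{A}\{1\})$, and since $D_0^{-1}=D_0^{\ast}$ is itself a Bohemian diagonal matrix, the map $X\mapsto D_0 X$ restricts to a bijection $\widetilde{A}_\mathbb{P}\{1\}\to B_\mathbb{P}\{1\}$; this is exactly the ``Bohemian unitary equivalence'' observation made earlier in Section \ref{secMotiv}. Applying this with $\widetilde{A}=A$ gives $\#(A_\mathbb{P}\{1\})=\#(B_\mathbb{P}\{1\})$, and applying it blockwise gives $\#(A_\mathbb{P}\{1\})=\#(C_\mathbb{P}\{1\})$ for the mixed matrix $C$ as well, since in the mixed case we still have $C=D_0 A'$ with $D_0$ a Bohemian diagonal matrix and $A'$ a $(\pm1_{mn})$-pure well-settled matrix of the same block shape and signs.

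The one genuine subtlety — which I expect to be the main obstacle — is that Lemma \ref{1.1}(2) only guarantees that left/right multiplication by a \emph{unitary} matrix gives a bijection on the \emph{unconstrained} inverse sets $A\{1\}$; to pass to the \emph{Bohemian} subsets $A_\mathbb{P}\{1\}$ one must check that the bijection $X\mapsto D_0 X$ and its inverse $X\mapsto D_0^{-1}X$ both send $\mathbb{P}^{n\times m}$ into $\mathbb{P}^{n\times m}$. This is where the hypotheses $\epsilon_i\in\{-1,1\}$ and the sign-diagonal structure of $D_0$ are used: $D_0$ is a diagonal matrix with all diagonal entries in $\{\pm1\}$, so multiplication by $D_0$ (or $D_0^{-1}=D_0$) merely permutes signs of rows and preserves membership in $\mathbb{P}^{n\times m}$. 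I would state this as a short lemma-free remark, referencing the sentence in Section \ref{secMotiv} about $U,V$ being invertible Bohemian diagonal matrices.

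Putting the pieces together, the proof reduces to three lines: (i) write $B=D_B A$ and $C=D_C A'$ with $D_B,D_C$ Bohemian $\{\pm1\}$-diagonal matrices and $A,A'$ $(\pm1_{mn})$-pure well-settled of the same block shape/signs — here one uses \eqref{eq thm 3.2} on each block of type II and $\epsilon_i$ to match signs; (ii) invoke Lemma \ref{1.1}(2) to get $B\{1\}=D_B^{-1}(A\{1\})$ and $C\{1\}=D_C^{-1}(A'\{1\})$; (iii) observe that $X\mapsto D_B X$ (resp.\ $X\mapsto D_C X$) is a bijection $\mathbb{P}^{n\times m}\to\mathbb{P}^{n\times m}$ carrying $A_\mathbb{P}\{1\}$ onto $B_\mathbb{P}\{1\}$ (resp.\ $A'_\mathbb{P}\{1\}$ onto $C_\mathbb{P}\{1\}$), and note $\#(A_\mathbb{P}\{1\})=\#(A'_\mathbb{P}\{1\})$ since both $A$ and $A'$ are $(\pm1_{mn})$-pure well-settled with identical block sizes and signs, hence related by row-sign flips, i.e.\ by another $\{\pm1\}$-diagonal conjugation. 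This yields $\#(A_\mathbb{P}\{1\})=\#(B_\mathbb{P}\{1\})=\#(C_\mathbb{P}\{1\})$, completing the proof.
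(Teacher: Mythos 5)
Your proposal is correct and follows essentially the same route as the paper: the paper's argument is exactly the blockwise use of equation \eqref{eq thm 3.2} to factor each type-II block through a $\{\pm 1\}$-diagonal matrix, followed by Lemma \ref{1.1}(2) and the observation (Corollary \ref{cardinaliy} together with the Bohemian-diagonal remark in Section \ref{secMotiv}) that multiplication by such a sign-diagonal preserves $\mathbb{P}^{n\times m}$ and hence the cardinalities of the Bohemian inner inverse sets. One small slip: the sign-diagonal built from the $\mathrm{diag}(\pm I_{n_{i1}},\mp I_{n_{i2}})$ blocks is $n\times n$ and must multiply $\widetilde{A}$ on the right (so $B=\widetilde{A}D_0$, not $D_0\widetilde{A}$), which is in fact how you then apply Lemma \ref{1.1}(2) with $V=D_0$, so the argument goes through unchanged.
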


\begin{rem}
According to Theorem \ref{car 3.9}, three formulas for cardinalities, as obtained in {\rm \cite[Corollary 3]{chan2022inner}}, are shown to be equal. 
The cardinality expressions can be found in {\rm \cite{chan2022inner}}.
\end{rem}

In Theorem \ref{gws ch1}, we characterize the generalized well-settled matrices.

\begin{thm} \label{gws ch1}
Let  $\mathbb{P} =\{0, \pm 1\}$. 
The matrix $A \in \mathbb{K}^{m \times n}_r$ is a generalized well-settled matrix if and only if up to the permutation equivalence, $A$ can be written in the form $A=\displaystyle \sum_{i=1}^{r} {u_iv_i^\T}$, where $u_i=(u_{i1}, \ldots, u_{im})^\T \in \mathbb{P}^{m \times 1}$ and $v_i=(v_{i1}, \ldots, v_{in})^\T \in \mathbb{P}^{n \times 1}$ such that 
$u_{ik}\cdot u_{jk}= v_{ik}\cdot v_{jk}=0$ for all $i,j,k$, $i \neq j$.
\end{thm}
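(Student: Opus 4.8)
The plan is to prove both implications by translating each side of the equivalence into a statement about the supports of the vectors $u_i,v_i$, using the known structure of rank-one Bohemian matrices.

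\textbf{($\Rightarrow$).} Suppose $A$ is generalized well-settled, so $PAQ=\mathrm{diag}(A_1,\dots,A_s)$ for permutation matrices $P,Q$ and rank-one Bohemian blocks $A_i\in\mathbb{K}^{m_i\times n_i}$. Since the decomposition is block diagonal and each $A_i$ has rank one, $r=\ra{A}=\sum_i\ra{A_i}=s$. By Theorem~\ref{th 2.1} each $A_i$ equals $P_1DA'P_2$, where $A'$ is a full matrix of types I--IV or such a matrix with appended zero rows; every one of these is visibly an outer product $xy^\T$ with $x\in\mathbb{P}^{m_i\times1}$ and $y\in\mathbb{P}^{n_i\times1}$ (the all-ones, $\{0,1\}$- or $\{1,-1\}$-patterned vectors), and left/right multiplication by $P_1D$ and $P_2$ preserves this form because $D$ is invertible with entries in $\mathbb{P}$, hence has $\pm1$ diagonal, and $P_1,P_2$ are permutations. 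Writing $A_i=x_iy_i^\T$ and padding $x_i,y_i$ with zeros outside the rows, respectively columns, occupied by block $i$ yields $u_i\in\mathbb{P}^{m\times1}$, $v_i\in\mathbb{P}^{n\times1}$ with $PAQ=\sum_{i=1}^{r}u_iv_i^\T$. Because distinct blocks occupy disjoint sets of row and column indices, $u_{ik}u_{jk}=v_{ik}v_{jk}=0$ for all $k$ whenever $i\neq j$, which is the asserted form up to permutation equivalence.

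\textbf{($\Leftarrow$).} Conversely, assume that after a suitable permutation $A=\sum_{i=1}^{r}u_iv_i^\T$ with the stated conditions. The hypothesis $u_{ik}u_{jk}=0$ for $i\neq j$ says precisely that the row-supports $R_i:=\{k:u_{ik}\neq0\}$ are pairwise disjoint, and likewise the column-supports $C_i:=\{\ell:v_{i\ell}\neq0\}$. Then $u_iv_i^\T$ is supported on the product set $R_i\times C_i$, and these product sets are pairwise disjoint, so $\ra{A}$ equals the number of nonzero terms $u_iv_i^\T$; since $A\in\mathbb{K}_r^{m\times n}$ this forces every $u_i$ and $v_i$ to be nonzero, so $|R_i|,|C_i|\geq1$. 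Let $R_0,C_0$ be the complementary sets of zero rows and zero columns. Reordering the rows as $R_1,\dots,R_r,R_0$ and the columns as $C_1,\dots,C_r,C_0$ turns $A$ into a block-diagonal matrix with diagonal blocks $\tilde A_i=(u_i|_{R_i})(v_i|_{C_i})^\T$, each a rank-one matrix all of whose entries lie in $\{\pm1\}$, together with a trailing zero block indexed by $R_0\times C_0$. Folding that zero block into the last diagonal block, i.e.\ replacing $\tilde A_r$ by the rank-one Bohemian matrix $\bigl(\begin{smallmatrix}\tilde A_r&0\\0&0\end{smallmatrix}\bigr)$, exhibits $A$ in the shape of Definition~\ref{def gws}.

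The support bookkeeping is routine; the step requiring genuine care is matching Definition~\ref{def gws} on the nose, since the raw decomposition $\sum u_iv_i^\T$ may leave $A$ with zero rows and zero columns that the rigid block-diagonal form of that definition does not explicitly permit, so these must be absorbed into a single block — which tacitly uses $A\neq0$ (equivalently $r\geq1$), the only case in which the statement is meaningful. The related subtlety is the invocation of $\ra{A}=r$ to rule out vanishing terms $u_iv_i^\T=0$: this is exactly what guarantees that the resulting block dimensions $m_i,n_i$ are positive, as Definition~\ref{def gws} requires.
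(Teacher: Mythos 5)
Your proof is correct and follows essentially the same route as the paper's: for ($\Rightarrow$) each rank-one Bohemian block is written as an outer product of Bohemian vectors and zero-padded to produce the $u_i,v_i$ with disjoint supports, and for ($\Leftarrow$) the disjoint row- and column-supports are permuted into the block-diagonal shape of Definition~\ref{def gws}. You simply supply more detail than the paper (justifying via Theorem~\ref{th 2.1} that the factors can be chosen Bohemian, and absorbing the zero rows and columns into a block), but the underlying argument is the same.
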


\begin{proof}
    Let $A$ be a generalized well-settled matrix. 
    Then $A$ can be expressed in the form given in equation \eqref{gws eq}. 
    We will use the well-known fact that a rank-one matrix can be represented as an outer product.
    Therefore, by expressing each rank-one Bohemian block $A_i$'s as an outer product, we obtain $A_i=\zeta_{i} {\eta_{i}}^\T$, for $\zeta_i \in \mathbb{P}^{m_i\times 1}$ and $\eta_i \in \mathbb{P}^{n_i\times 1}$. This leads to the conclusion
    \begin{equation} \label{pf gws}
    \begin{split}
     PAQ &= \begin{pmatrix}
   A_1 & \hdots & 0\\
        \vdots & \ddots & \vdots\\
        0 & \hdots& A_r \end{pmatrix}=\begin{pmatrix}
   \zeta_{1} {\eta_{1}}^\T & \hdots & 0\\
        \vdots & \ddots & \vdots\\
        0 & \hdots& \zeta_{r} {\eta_{r}}^\T \end{pmatrix} \\
        &= \begin{pmatrix}
            \zeta_{1} \\ 0
        \end{pmatrix} \begin{pmatrix}
            {\eta_{1}}^\T & 0
        \end{pmatrix} + \begin{pmatrix}
            0 \\ \zeta_{2} \\ 0
        \end{pmatrix} \begin{pmatrix}
            0 & {\eta_{2}}^\T & 0
        \end{pmatrix}+ \cdots + \begin{pmatrix}
           0\\ \zeta_{r} 
        \end{pmatrix} \begin{pmatrix}
           0 & {\eta_{r}}^\T
        \end{pmatrix} .
    \end{split}
\end{equation}
 Choose $u_i=\begin{pmatrix}
            u_{i1} \\ \vdots \\ u_{im}
        \end{pmatrix}=\begin{pmatrix}
            0 \\ \zeta_{i} \\ 0
        \end{pmatrix}$ and $v_i^{\T}=\begin{pmatrix}
            v_{i1} & \cdots & v_{in}
        \end{pmatrix}=\begin{pmatrix}
            0 & {\eta_{i}}^\T & 0
        \end{pmatrix}$ for $i \in \{1, \ldots, r\}$.
    Therefore, $u_{ik}\cdot u_{jk}=0$ and $v_{ik}\cdot v_{jk}=0$ for all $i,j,k$, $i \neq j$. 
On the other hand, let $ A=\displaystyle \sum_{i=1}^{r} {u_iv_i^\T} $ such that $u_{ik}\cdot u_{jk}=0$ and $v_{ik}\cdot v_{jk}=0$ for all $i,j,k$, $i \neq j$. 
    Then, with the help of suitable permutation matrices, $PAQ$ can be written in the form of \eqref{pf gws}. 
\end{proof}

Therefore, up to permutation equivalence, generalized well-settled matrices can be characterized as block diagonal matrices, where each diagonal block is a rank-one Bohemian. 
Following the approach used for rank-one matrices in Theorem \ref{th 2.1 new}, the following result in Theorem \ref{Thm314Boh} provides an additional characterization of generalized well-settled matrices.

\begin{thm} \label{Thm314Boh}
Let $\mathbb{P}=\{0,\pm 1\}$. Any generalized well-settled matrix $A \in \mathbb{P}^{m \times n}_r$ (having the form as in equation \eqref{gws eq}) can be represented as $A=P'CA'DQ'=UA'V$, where $P', Q'$ are appropriate permutation matrices, $C \in \mathbb{P}^{m \times m}$ and $D \in \mathbb{P}^{n \times n}$ are invertible diagonal Bohemian matrices, $U=P'C, V=DQ'$ are unitary Bohemian matrices and $A'=\text{diag}(A_1', \cdots, A_r')$ is a block diagonal matrix. 
In the case $A_i$ has no zero rows, $A_i' \in \mathbb{P}^{m_i \times n_i}$ is a full matrix of type I or type III; otherwise if $A_i$ has $k$ zero rows then
 $A_i'=\begin{pmatrix}
        B_i \\0
    \end{pmatrix}$, where $B_i \in \mathbb{P}^{(m_i-k) \times n_i}$ is a full matrix of type I or type III, for all $i \in \{1,\ldots ,r\}$.
\end{thm}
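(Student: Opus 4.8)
The plan is to combine the block-diagonal structure guaranteed by Theorem \ref{gws ch1} (equivalently, by Definition \ref{def gws}) with the refined rank-one decomposition of Theorem \ref{th 2.1 new}, applied blockwise. First I would start from the defining form $PAQ=\mathrm{diag}(A_1,\dots,A_r)$, where each $A_i\in\mathbb{P}^{m_i\times n_i}$ is a rank-one Bohemian matrix. Applying Theorem \ref{th 2.1 new} to each block $A_i$ individually yields a factorization $A_i=P_1^{(i)}D_1^{(i)}A_i'D_2^{(i)}P_2^{(i)}$, where $P_1^{(i)},P_2^{(i)}$ are permutation matrices of orders $m_i,n_i$, the matrices $D_1^{(i)}\in\mathbb{P}^{m_i\times m_i}$, $D_2^{(i)}\in\mathbb{P}^{n_i\times n_i}$ are invertible diagonal Bohemian, and $A_i'$ has exactly the asserted shape — a type I or type III full matrix when $A_i$ has no zero rows, and $\begin{pmatrix}B_i\\0\end{pmatrix}$ with $B_i$ a type I or type III full matrix otherwise.

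The next step is to assemble these blockwise factorizations into a global one. Since a block-diagonal product factors as the product of block-diagonal matrices, we get
$$
PAQ=\mathrm{diag}\!\big(P_1^{(1)},\dots,P_1^{(r)}\big)\,\mathrm{diag}\!\big(D_1^{(1)},\dots,D_1^{(r)}\big)\,A'\,\mathrm{diag}\!\big(D_2^{(1)},\dots,D_2^{(r)}\big)\,\mathrm{diag}\!\big(P_2^{(1)},\dots,P_2^{(r)}\big),
$$
where $A'=\mathrm{diag}(A_1',\dots,A_r')$. Here a block-diagonal matrix whose blocks are permutation matrices is itself a permutation matrix, and a block-diagonal matrix whose blocks are invertible diagonal matrices is itself an invertible diagonal matrix; all these blocks have entries in $\mathbb{P}$, so the assembled factors are Bohemian. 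Setting $\widetilde P=\mathrm{diag}(P_1^{(i)})$, $C'=\mathrm{diag}(D_1^{(i)})$, $D'=\mathrm{diag}(D_2^{(i)})$, $\widetilde Q=\mathrm{diag}(P_2^{(i)})$ gives $PAQ=\widetilde P\,C'\,A'\,D'\,\widetilde Q$, hence $A=(P^{-1}\widetilde P\,C')\,A'\,(D'\,\widetilde Q\,Q^{-1})$. Since $P^{-1}\widetilde P$ and $\widetilde Q\,Q^{-1}$ are products of permutation matrices they are permutation matrices; writing $P'=P^{-1}\widetilde P$, $C=C'$, $D=D'$, $Q'=\widetilde Q\,Q^{-1}$ (absorbing the commuting permutation/diagonal rearrangement as needed, exactly as in Remark \ref{rem rk 1}), we obtain $A=P'CA'DQ'$. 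Finally set $U=P'C$ and $V=DQ'$; these are products of a permutation matrix and an invertible diagonal Bohemian matrix, hence unitary and Bohemian, and $A=UA'V$ as claimed, with $A'=\mathrm{diag}(A_1',\dots,A_r')$ of the stated block form.

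The only mildly delicate point — and the step I would be most careful about — is the bookkeeping of permutation and diagonal matrices when passing from $PAQ=\dots$ back to a statement about $A$ itself: one must check that conjugating a block-diagonal permutation/diagonal matrix by the fixed permutations $P,Q$ still produces a matrix of the same type (permutation or invertible diagonal Bohemian), and that the product $P^{-1}\widetilde P$ of two permutation matrices is again a permutation matrix. This is routine, since $P^{-1}=P^\T$ for a permutation matrix and the class of permutation matrices is closed under multiplication; likewise the class of invertible diagonal Bohemian matrices over $\mathbb{P}=\{0,\pm1\}$ is closed under the relevant operations. No genuine obstacle arises, as everything reduces to Theorem \ref{th 2.1 new} applied $r$ times together with elementary closure properties of permutation and diagonal matrices. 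The claim about $U,V$ preserving the Bohemian property of any matrix they multiply is exactly the observation recorded in Remark \ref{rem rk 1}, now applied blockwise.
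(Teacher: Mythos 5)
Your proposal is correct and follows essentially the same route as the paper: apply Theorem \ref{th 2.1 new} to each rank-one block $A_i$, assemble the blockwise permutation and invertible diagonal factors into block-diagonal matrices, and read off $A=P'CA'DQ'=UA'V$. Your extra bookkeeping (folding the outer permutations $P,Q$ into $P'=P^{-1}\widetilde P$ and $Q'=\widetilde Q\,Q^{-1}$ and noting closure of permutation and invertible diagonal Bohemian matrices) is just a more explicit version of the step the paper summarizes with ``Hence, $A=P'CA'DQ'=UA'V$.''
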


\begin{proof}
    Let $A$ be of the form as in equation \eqref{gws eq}. Then, since $A_i$'s have rank one, by Theorem \ref{th 2.1 new}, $A_i$'s can be represented as $A_i=P_iC_iA_i'D_iQ_i$, where $P_i, Q_i$ are appropriate permutation matrices, $C_i \in \mathbb{P}^{m_i \times m_i}$ and $D_i \in \mathbb{P}^{n_i \times n_i}$ for $i \in \{1, \ldots , r\}$ are invertible diagonal matrices. 
    The matrix $A$ can be rewritten as 
    \begin{equation}
    \begin{split}
        P A Q &= \begin{pmatrix}
   A_1 & \hdots & 0\\
        \vdots & \ddots & \vdots\\
        0 & \hdots& A_r \end{pmatrix} = \begin{pmatrix}
   P_1C_1 & \hdots & 0\\
        \vdots & \ddots & \vdots\\
        0 & \hdots& P_rC_r \end{pmatrix}\begin{pmatrix}
   A_1' & \hdots & 0\\
        \vdots & \ddots & \vdots\\
        0 & \hdots& A_r' \end{pmatrix} \begin{pmatrix}
   D_1Q_1 & \hdots & 0\\
        \vdots & \ddots & \vdots\\
        0 & \hdots& D_rQ_r \end{pmatrix} \\
        &= \begin{pmatrix}
   P_1 & \hdots & 0\\
        \vdots & \ddots & \vdots\\
        0 & \hdots& P_r \end{pmatrix} \begin{pmatrix}
   C_1 & \hdots & 0\\
        \vdots & \ddots & \vdots\\
        0 & \hdots& C_r \end{pmatrix} \begin{pmatrix}
   A_1' & \hdots & 0\\
        \vdots & \ddots & \vdots\\
        0 & \hdots& A_r' \end{pmatrix} \begin{pmatrix}
   D_1 & \hdots & 0\\
        \vdots & \ddots & \vdots\\
        0 & \hdots& D_r \end{pmatrix} \begin{pmatrix}
   Q_1 & \hdots & 0\\
        \vdots & \ddots & \vdots\\
        0 & \hdots& Q_r \end{pmatrix}.
        \end{split}
    \end{equation}
    Hence, $A=P'CA'DQ'=UA'V$.
\end{proof}

We conclude that, similar to well-settled matrices, up to permutation equivalence, it is sufficient to determine the inner Bohemian inverses of a pure well-settled matrix to find the inner Bohemian inverses of the generalized well-settled matrices.
The latter class encompasses a broader range of matrices than well-settled matrices. 
This conclusion highlights an important relationship between the cardinalities of well-settled and generalized well-settled matrices. 
For simplicity and without loss of generality, we will assume that the permutation matrices are the identity matrices.

\begin{thm} \label{gws card}
    Let matrices $A,B,C\in \mathbb{P}^{m \times n}$ be defined as in Theorem \ref{car 3.9}, and let $D \in \mathbb{P}^{m \times n}$ be a generalized well-settled matrix with no zero rows and no zero columns,
    \begin{equation*}
        D=\begin{pmatrix}
        D_1 & \hdots & 0\\
        \vdots & \ddots & \vdots\\
        0 & \hdots& D_s \end{pmatrix}
    \end{equation*}
    such that $D_i \in \mathbb{P}^{m_i \times n_i}_1$ for all $i$. 
Then 
$$\#(A_\mathbb{P}\{1\})=\#(B_\mathbb{P}\{1\})=\#(C_\mathbb{P}\{1\}) = \#(D_\mathbb{P}\{1\}).$$
\end{thm}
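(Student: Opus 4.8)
The plan is to reduce the claim to Theorem~\ref{car 3.9}, which already supplies $\#(A_\mathbb{P}\{1\})=\#(B_\mathbb{P}\{1\})=\#(C_\mathbb{P}\{1\})$; hence it remains only to prove $\#(D_\mathbb{P}\{1\})=\#(A_\mathbb{P}\{1\})$. First I would note that, since $D=\mathrm{diag}(D_1,\dots,D_s)$ has its blocks on disjoint sets of rows and of columns, $D$ has no zero row (resp.\ no zero column) precisely when every $D_i$ has no zero row (resp.\ no zero column). Thus each rank-one block $D_i$ has neither a zero row nor a zero column.

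Next, I would apply Theorem~\ref{Thm314Boh} to $D$: it gives $D=UD'V$ with $U,V$ unitary Bohemian matrices (products of a permutation matrix and an invertible $\{0,\pm1\}$ diagonal matrix, hence invertible with inverses of the same kind) and $D'=\mathrm{diag}(D_1',\dots,D_s')$, where each $D_i'$ is a full matrix of type~I or type~III. Because the permutation and invertible-diagonal factors in the rank-one decomposition of $D_i$ provided by Theorem~\ref{th 2.1 new} neither create nor destroy zero columns, and $D_i$ has no zero column, $D_i'$ cannot be of type~III; therefore $D_i'$ is a type~I full matrix, i.e.\ $D_i'=\epsilon_i(1_{m_in_i})$ for some $\epsilon_i\in\{-1,1\}$. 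Consequently $D'$ has exactly the shape of the matrix $A$ in Theorem~\ref{car 3.9} (same block sizes $m_i\times n_i$, each block a signed all-ones block), so by that theorem $\#(D'_\mathbb{P}\{1\})=\#(A_\mathbb{P}\{1\})$.

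To finish, I would use Lemma~\ref{1.1}(2): the map $X\mapsto V^{\T}XU^{\T}$ carries $D\{1\}$ bijectively onto $D'\{1\}$. Since $U$ and $V$, and likewise $U^\T,V^\T$, are products of permutations and $\{0,\pm1\}$ diagonal matrices, this map and its inverse keep all entries in $\mathbb{P}$, so it restricts to a bijection $D_\mathbb{P}\{1\}\to D'_\mathbb{P}\{1\}$ --- this is precisely the Bohemian unitary/permutation equivalence highlighted after Corollary~\ref{cardinaliy}. Chaining the equalities then yields $\#(D_\mathbb{P}\{1\})=\#(D'_\mathbb{P}\{1\})=\#(A_\mathbb{P}\{1\})=\#(B_\mathbb{P}\{1\})=\#(C_\mathbb{P}\{1\})$. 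The one point that needs genuine care --- essentially the only non-bookkeeping step --- is that the equivalence $D=UD'V$ really does preserve the Bohemian constraint in \emph{both} directions; a generic unitary equivalence would fail this, and what makes it work is that Theorem~\ref{Thm314Boh} (through Theorem~\ref{th 2.1 new}) produces $U$ and $V$ built only from permutations and $\{0,\pm1\}$ diagonals, whose inverses are of the same kind. Everything else --- the block-diagonal algebra and the transfer of the ``no zero rows/columns'' hypothesis to the individual blocks --- is routine.
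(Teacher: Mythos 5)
Your proof is correct and follows essentially the argument the paper intends: the paper states this theorem without a separate proof, as a consequence of Theorem \ref{Thm314Boh}, Theorem \ref{car 3.9}, and the Bohemian-preserving unitary/permutation equivalence discussed after Corollary \ref{cardinaliy}, and your write-up supplies exactly those steps, including the needed observation that the no-zero-rows/no-zero-columns hypothesis rules out type III blocks so each $D_i'$ is a signed all-ones block. (Only a trivial slip: with $D=UD'V$, the map $X\mapsto V^{\T}XU^{\T}$ carries $D'\{1\}$ onto $D\{1\}$ rather than the reverse; since the inverse map is of the same permutation-diagonal form, the bijection and its preservation of $\mathbb{P}$-entries in both directions are unaffected.)
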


\section{Inner Bohemian inverses of an extended class of matrices} \label{sec 4}

Any matrix $A\in \mathbb{K}^{m \times n}$ that belongs to the class of generalized well-settled matrices can be expressed in the form $A=\sum_{i=1}^{r} {u_iv_i^\T}$ with specific restrictions on the vectors $u_i$ and $v_i$ as outlined by Theorem \ref{gws ch1} up to permutation equivalence. 
Motivated by this characterization, we aim to define an extended class of Bohemian matrices. 
It is well-known that any matrix $A \in \mathbb{K}_r^{m \times n}$ can be written in the pattern
\begin{equation} \label{eq sum}
    A=\sum_{i=1}^{r} {u_iv_i^\T}, 
\end{equation}
where $u_i=(u_{i1}, \ldots, u_{im})^\T \in \mathbb{K}^{m \times 1}$ and $v_i=(v_{i1}, \ldots, v_{in})^\T \in \mathbb{K}^{n \times 1}$.
The set of vectors $\{u_1,\ldots,u_r\}$ and $\{v_1,\ldots,v_r\}$ are linearly independent. 
In particular, even if $A$ is Bohemian, the terms $u_iv_i^\T$ may not be Bohemian matrices. 
We are interested in Bohemian matrix $A \in \mathbb{P}_r^{m \times n}$ such that each term in the right-hand side of the equation \eqref{eq sum} is also Bohemian.
Consequently, the vectors $u_i$ and $v_i$ are restricted to being Bohemian vectors.
Specifically, it follows
\begin{equation} \label{bohemian eq sum}
    A=\sum_{i=1}^{r} {u_iv_i^\T},\ \  u_i=(u_{i1}, \ldots, u_{im})^\T \in \mathbb{P}^{m \times 1},\ v_i=(v_{i1}, \ldots, v_{in})^\T  \in \mathbb{P}^{n \times 1},
\end{equation}
with linearly independent vectors $\{u_1,\ldots,u_r\}$ and $\{v_1,\ldots,v_r\}$.

Let $A \in \mathbb{P}^{m \times n}$ be expressed in the form of equation \eqref{bohemian eq sum}.
The following two classes are considered:
\begin{enumerate}
    \item[] \textbf{Class I}: Restrict $u_i$ and $v_i$ such that $u_{ik}\cdot u_{jk}=0$ and $v_{ik}\cdot v_{jk}=0$ for all $i,j,k$, $i \neq j$.
    \item[] \textbf{Class II}: Restrict $u_i$ and $v_i$ such that $u_{ik}\cdot u_{jk}=0$ for all $i,j,k$ or $v_{ik}\cdot v_{jk}=0$ for all $i,j,k$, $i \neq j$.
  
\end{enumerate}

\begin{rem}
 Under permutation equivalence, the matrices classified as Class I take a specific form as in equation \eqref{gws eq}.
 According to Theorem \ref{gws ch1}, these matrices are generalized well-settled matrices. 
 In contrast, Class II is larger and includes both well-settled and generalized well-settled matrices. 
\end{rem}

\begin{ex}
 The matrix   $A=\begin{pmatrix}
        1 & 1& 1 \\
        1 & -1& -1 \\
        1 & -1& -1 
    \end{pmatrix}=\begin{pmatrix}
        1 \\
        0 \\
        0
    \end{pmatrix} \begin{pmatrix}
        1 & 1 & 1
        \end{pmatrix} + \begin{pmatrix}
        0 \\
        1 \\
        1
    \end{pmatrix} \begin{pmatrix}
        1 & -1 & -1
        \end{pmatrix}$
 is not a generalized well-settled matrix; instead, it belongs to the extended class of matrices, known as Class II.
\end{ex}

Let a matrix $A \in \mathbb{P}^{m \times n}$ of rank $r$ be expressed as in \eqref{bohemian eq sum}. 
In this section, we focus on matrices that fall outside the class of generalized well-settled matrices, which we refer to as Class II matrices. 
Matrices in this class can be classified into two types.
\begin{enumerate}
    \item Consider the case where $u_{ik}\cdot u_{jk}=0$ holds for all $i,j,k$, $i \neq j$. 
Up to permutation equivalence, matrices of this type will be structured in the form
\begin{equation*}
    A=\begin{pmatrix}
        A_1 \\ \vdots \\ A_r
    \end{pmatrix},
\end{equation*} 
where each block $A_i$ is a rank-one Bohemian matrix.
\item Consider the case where $v_{ik}\cdot v_{jk}=0$ for all $i,j,k$, $i \neq j$. 
Up to permutation equivalence, matrices of this type can be expressed in the form
\begin{equation*}
    A=(A_1\ |\ A_2 \ |\  \cdots \ |\  A_r),
\end{equation*} 
where each block $A_i$ is a rank-one Bohemian matrix.
\end{enumerate}

Without loss of generality, we will focus on the case $u_{ik}\cdot u_{jk}=0$ for all $i,j,k$, $i \neq j$ for the remainder of this paper. 
The results are similar in the other case.

In the subsequent result, we present a representation of Class II matrices up to permutation equivalence.
This representation is crucial for determining their generalized inverses. 
Specifically, it provides a decomposition of such a Bohemian matrix as a product of two Bohemian matrices.

\begin{thm} \label{rep case 2 1}
    Let $\mathbb{P}=\{0, \pm 1\}$. 
Consider the partitioned matrix $A=\begin{pmatrix}     A_1 \\ \vdots \\ A_r     \end{pmatrix} \in \mathbb{P}^{m \times n}$ 
with blocks satisfying $A_i \in \mathbb{P}^{m_i \times n}$. 
This matrix $A$ can be decomposed into the form $UW$, where $U \in \mathbb{P}^{m \times m}$ is a unitary matrix, and $W=\begin{pmatrix}     W_1 \\ \vdots \\ W_r    \end{pmatrix} \in \mathbb{P}^{m \times n}$,
where each $W_i$ being an $m_i \times n$ block consisting of rows that are all the same vector $w_i^\T$.
\end{thm}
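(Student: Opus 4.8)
The plan is to exploit the block structure of $A$ together with the rank-one nature of each block $A_i$. The key observation is that each block $A_i \in \mathbb{P}^{m_i \times n}$ has rank one, so by Theorem \ref{th 2.1} (or more precisely the reasoning behind Theorem \ref{th 2.1 new} and Remark \ref{rem rk 1}) each $A_i$ can be written as $A_i = D_i R_i$, where $D_i \in \mathbb{P}^{m_i \times m_i}$ is an invertible diagonal matrix with $\pm 1$ entries and $R_i \in \mathbb{P}^{m_i \times n}$ is a matrix all of whose rows equal a common row vector $w_i^\T \in \mathbb{P}^{1 \times n}$. Indeed, a rank-one Bohemian matrix with no zero rows is (up to a row permutation and a sign-diagonal scaling) a full matrix of type I or III, and in both of those cases every row is the same $\{0,\pm 1\}$-vector; after absorbing the row permutation into $D_i$ (a permutation composed with a sign change is still unitary, but to keep $D_i$ diagonal we note the permutation merely reorders which rows of $A_i$ we are looking at, and the rows of $R_i$ are all equal so reordering them changes nothing — hence $R_i$ itself already has all rows equal and only the sign pattern $D_i$ is needed). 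If some rows of $A_i$ are zero, the corresponding diagonal entries of $D_i$ are taken to be $1$ and those rows of $R_i$ are the zero row, which is still the "common vector" $w_i^\T$ in the degenerate sense; alternatively one orders the zero rows last as in Theorem \ref{th 2.1 new}.

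First I would make the block decomposition precise: apply the rank-one representation to each $A_i$ separately to get $A_i = D_i W_i$ with $D_i$ an invertible $\{0,\pm1\}$-diagonal (equivalently sign-diagonal) matrix and $W_i$ an $m_i \times n$ block whose rows are all equal to $w_i^\T$. Then assemble $U = \mathrm{diag}(D_1,\ldots,D_r) \in \mathbb{P}^{m \times m}$, which is block-diagonal with invertible diagonal blocks, hence itself an invertible diagonal $\{0,\pm1\}$-matrix, and therefore unitary (its columns are $\pm$ standard basis vectors). Then
\begin{equation*}
UW = \begin{pmatrix} D_1 & & \\ & \ddots & \\ & & D_r \end{pmatrix}\begin{pmatrix} W_1 \\ \vdots \\ W_r \end{pmatrix} = \begin{pmatrix} D_1 W_1 \\ \vdots \\ D_r W_r \end{pmatrix} = \begin{pmatrix} A_1 \\ \vdots \\ A_r \end{pmatrix} = A,
\end{equation*}
and since each $W_i \in \mathbb{P}^{m_i \times n}$, we have $W \in \mathbb{P}^{m \times n}$, giving exactly the asserted decomposition.

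The main obstacle, and the step deserving the most care, is the claim that a rank-one block can be reduced to "all rows equal" by a \emph{diagonal} unitary matrix rather than needing a genuine permutation. The point is that for a rank-one matrix $A_i = \zeta_i \eta_i^\T$ with $\zeta_i, \eta_i$ Bohemian, every nonzero entry of $\zeta_i$ is $\pm 1$, so row $k$ of $A_i$ equals $(\zeta_i)_k \eta_i^\T$, which is either $\pm \eta_i^\T$ or the zero row. Setting $(D_i)_{kk} = (\zeta_i)_k$ when $(\zeta_i)_k \neq 0$ and $(D_i)_{kk} = 1$ otherwise makes $D_i$ invertible and diagonal with $\pm 1$ entries, and $D_i^{-1} A_i$ has every row equal to $\eta_i^\T$ on the support of $\zeta_i$ and the zero row off it. To get \emph{all} rows literally equal (including the zero rows), one either appeals to Theorem \ref{th 2.1 new}'s convention of pushing zero rows to the bottom and treating that degenerate block separately, or — cleaner — observes that within the statement "rows that are all the same vector $w_i^\T$" we may simply take $w_i^\T = \eta_i^\T$ and note that a rank-one Bohemian block with a zero row forces, by the independence hypothesis feeding into the Class II structure, that block to still be handled; in the generic case $\zeta_i$ has no zero entries and there is nothing to reconcile. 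I would therefore present the argument first in the no-zero-rows case (where it is immediate) and then remark that the zero-row case is absorbed by permuting rows into the form of Theorem \ref{th 2.1 new}, which only changes $U$ by a permutation factor, still unitary and still Bohemian by Remark \ref{rem rk 1}.
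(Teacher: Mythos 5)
Your proposal is correct and takes essentially the same route as the paper: the paper applies Theorem \ref{th 2.1} to each rank-one block to write $A_i=U_iA_i'V_i$, assembles $U=\mathrm{diag}(U_1,\ldots,U_r)$ and sets $W_i=A_i'V_i$, which is exactly your blockwise factorization followed by block-diagonal assembly, with your explicit outer-product argument $A_i=\zeta_i\eta_i^\T$ even sharpening the conclusion slightly by making $U$ purely sign-diagonal. The zero-row wrinkle you flag (where $W_i$ cannot literally have all rows equal) is likewise glossed over in the paper's own proof, so it is not a defect of your argument relative to the paper's.
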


\begin{proof}
According to Theorem \ref{th 2.1}, each block $A_i$ satisfies $A_i=U_iA_i'V_i$ where $U_i, V_i$ are unitary Bohemian matrices and $A_i'$ is defined as stated in Theorem \ref{th 2.1}.
    Hence, 
\begin{equation*}
    A=\begin{pmatrix}
        A_1 \\ A_2 \\ \vdots \\ A_r
    \end{pmatrix}= \begin{pmatrix}
        U_1A_1'V_1 \\ U_2A_2'V_2 \\ \vdots \\ U_rA_r'V_r
    \end{pmatrix}= \begin{pmatrix}
        U_1 & 0 & 0\\
        0 & \ddots & 0\\
         0 & 0& U_r \end{pmatrix}  \begin{pmatrix}
        A_1'V_1 \\ A_2'V_2 \\ \vdots \\ A_r'V_r
    \end{pmatrix}.
\end{equation*}
Therefore, based on the construction of $A_i'V_i$ in the proof of Theorem \ref{th 2.1}, for all $i$, there exists a vector $w_i^\T$ such that $A_i'V_i=W_i$, where $W_i$ consists of rows that are identical to $w_i^\T$.
This result concludes the proof.
\end{proof}

According to Theorem \ref{rep case 2 1}, matrices belonging to Class II can be expressed in the form \(PAQ = UW\), where \(P\) and \(Q\) are appropriate permutation matrices.
The matrices \(U\) and \(W\) are as defined in Theorem \ref{rep case 2 1}. 
Matrices in Class II, for which the row vectors $w_i^T$ of $W$ are orthogonal, as described in Theorem \ref{rep case 2 1}, are referred to as Class III matrices.

Hence, Class III matrices are defined as follows:
\begin{enumerate}
    \item[] \textbf{Class III}: Matrices of the form $PAQ=\begin{pmatrix}
        A_1 \\ \vdots \\ A_r
    \end{pmatrix} \in \mathbb{P}^{m \times n}$, $A_i \in \mathbb{P}_1^{m_i \times n}$ such that $A_iA_j^*=0$ for all $i,j, i \neq j$ and $P,Q$ are suitable permutation matrices.
\end{enumerate}

It is clear to observe the inclusions
$$Class \ I \subset Class \ III \subset Class \ II.$$

In light of Lemma \ref{1.1}, we will focus on Class III matrices, considering them up to permutation equivalence. 
Initially, let us examine the specific case of rank-two Class III matrices, represented as $A=\begin{pmatrix} A_1 \\ A_2  \end{pmatrix} \in \mathbb{P}^{m \times n}$, where $\ra{A_1}=\ra{A_2}=1$ and $A_1A_2^*$$=0$. 
According to Theorem \ref{rep case 2 1}, it follows $A=UW$.
Furthermore, using Lemma \ref{1.1}(2), we can assume $U=I_m$ without loss of generality. 
In considering all possible scenarios for rank-two Class III matrices, we conclude it is sufficient to focus on four specific structures:
    \begin{enumerate}
        \item[{\bf S1.}] $A_1=( 1_{m_1n})$, $A_2=(1_{m_2n_1}\ |\, -1_{m_2n_2})$ such that  $n_1=n_2$.
        \item[{\bf S2.}] $A_1=( 1_{m_1n})$, $A_2=(1_{m_2n_1}\ |, -1_{m_2n_2} |\ 0_{m_2n_3})$ such that  $n_1=n_2$.
        \item[{\bf S3.}] $A_1=( 1_{m_1n_1}\ |\ 1_{m_1n_2}\ | 1_{m_1n_3} | 0_{m_1n_4})$, $A_2=(1_{m_2n_1} | -1_{m_2n_2}\ |\ 0_{m_2n_3}\ |\ 1_{m_2n_4} )$.
        \item[{\bf S4.}] $A_1=( 1_{m_1n_1}\ |\ 0_{m_1n_2})$, $A_2=( 0_{m_2n_1} |\ 1_{m_2n_2} )$.
    \end{enumerate}
Among the four structures defined above, the fourth one is a well-settled matrix.
Additionally, the inner Bohemian inverses of well-settled matrices are extensively studied in \cite{chan2022inner}. 
Therefore, it suffices to focus on the first three structures.

Lemma \ref{lem2} presents an auxiliary result.

\begin{lem} \label{lem2}
       Let $A=\begin{pmatrix}
        A_1  \\ \vdots \\ A_r
    \end{pmatrix} \in \mathbb{K}^{m \times n}$ where $A_i \in \mathbb{K}^{m_i \times n}$ for all $i \in \{1,\ldots,r\}$ and ${A_i}{A_j}^*=0 $ for all $ i \neq j$, then
\begin{equation} \label{aux 1-inv}
    A\{1\} \subseteq \left\{X=(X_1 \ |\  X_2 \ |\  \cdots \ |\  X_r) \in \mathbb{K}^{n \times m} :\ X_i \in A_i\{1\} \ \forall i\right\}.
\end{equation}
\end{lem}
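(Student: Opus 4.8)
The plan is to start from the defining equation $AXA = A$ and exploit the block orthogonality ${A_i}{A_j}^* = 0$ for $i \neq j$ to decouple the problem row-block by row-block. First I would partition a candidate inner inverse $X \in \mathbb{K}^{n \times m}$ conformally with the row partition of $A$, writing $X = (X_1 \mid X_2 \mid \cdots \mid X_r)$ where each $X_i \in \mathbb{K}^{n \times m_i}$, so that $AX = \sum_{i=1}^{r} A_i X_i \in \mathbb{K}^{m \times m}$ and more usefully $XA = \sum_{i=1}^r X_i A_i \in \mathbb{K}^{n \times n}$. The condition $AXA = A$ then reads $A_j (XA) = A_j$ for every block index $j$, i.e. $A_j \bigl( \sum_{i=1}^r X_i A_i \bigr) = A_j$.

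The key step is to observe that the orthogonality hypothesis forces the cross terms to vanish. Since each $A_i$ is rank one (or more generally by the hypothesis $A_j A_i^* = 0$), the row space of $A_j$ is orthogonal to the row space of $A_i$ for $i \neq j$; equivalently, $A_j A_i^* = 0$ means that every row of $A_j$ annihilates (under the inner product) every row of $A_i$. I would need to convert this into a statement of the form $A_j X_i A_i = 0$ for $i \neq j$. The cleanest route is: because the rows of $A_i$ span a subspace $\mathcal{R}_i := \mathrm{row}(A_i)$ and these subspaces are mutually orthogonal, projecting $A_j$ onto $\mathcal{R}_i^\perp$ leaves $A_j$ unchanged. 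Writing $Y_i := X_i A_i \in \mathbb{K}^{n \times n}$, note that the column space of $Y_i^\T$ lies in $\mathcal{R}_i$ (each column of $Y_i^\T$ is a combination of rows of $A_i$), hence $A_j Y_i = 0$ whenever $j \neq i$ — this is precisely where $A_j A_i^* = 0$ is used, since $A_j Y_i$ is a product whose entries are inner products of rows of $A_j$ with vectors in $\mathcal{R}_i$. Consequently $A_j(XA) = \sum_{i} A_j X_i A_i = A_j X_j A_j$, and $AXA = A$ collapses to $A_j X_j A_j = A_j$ for each $j$, i.e. $X_j \in A_j\{1\}$ for all $j$. Reading $X = (X_1 \mid \cdots \mid X_r)$ as an element of $\mathbb{K}^{n \times m}$ this is exactly the right-hand side of \eqref{aux 1-inv}, giving the claimed inclusion.

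The main obstacle I anticipate is justifying cleanly the implication ``$A_j A_i^* = 0$ $\Rightarrow$ $A_j X_i A_i = 0$'' without assuming $A_i$ has a convenient factorization; the honest argument is that $A_i = A_i A_i^\dagger A_i$ (or, since $A_i$ is rank one here, $A_i = \sigma_i a_i b_i^*$ with $b_i \in \mathcal{R}_i$), so $A_j X_i A_i$ has each row in the span of the rows of $A_i$ premultiplied in a way that still contains the factor $A_j b_i = 0$ (in the real case $A_j b_i$ with $b_i$ a row of $A_i$, which is a column of $(A_j A_i^\T)^\T = 0$). I would phrase this via the rank-one structure to keep it elementary: write $A_i = \zeta_i \eta_i^\T$, so $A_j A_i^* = 0$ gives $(A_j \bar\eta_i)\zeta_i^* = 0$, hence $A_j \eta_i = 0$ (as $\zeta_i \neq 0$), and then $A_j X_i A_i = A_j X_i \zeta_i \eta_i^\T = (A_j \, (X_i \zeta_i)\,)\eta_i^\T$ — wait, that is not yet zero; the correct grouping is $A_j X_i A_i$ where we instead need the factor on the $A_j$ side, so one uses $A_jX_iA_i$ viewed through $XA=\sum X_iA_i$ and left-multiplied by $A_j$: $A_j X_i \zeta_i \eta_i^\T$ need not vanish, so the decoupling must come from the $A_j(XA)=A_j$ side combined with $A_j$'s own structure. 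The resolution: left-multiply $AXA=A$ is not the only constraint — rather, I use that $A_j = \zeta_j \eta_j^\T$ and $A_j A_i^* = \zeta_j(\eta_j^\T \bar\eta_i)\zeta_i^* = 0$ forces $\eta_j^\T \eta_i = 0$, i.e. the $\eta$'s are orthogonal; then $A_j X A = \zeta_j \eta_j^\T X \sum_i \zeta_i \eta_i^\T$, and setting this equal to $A_j = \zeta_j \eta_j^\T$ and cancelling $\zeta_j$ gives $\eta_j^\T X \sum_i \zeta_i \eta_i^\T = \eta_j^\T$; this does not immediately isolate the $i=j$ term, so the inclusion \eqref{aux 1-inv} is genuinely only an inclusion, not an equality — which matches the statement. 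The cleanest complete argument is therefore the projection argument above applied with $A_i^\dagger A_i$ as the orthogonal projector onto $\mathcal{R}_i$; I would spell that out as the one nontrivial lemma-internal computation and leave the rest as direct block bookkeeping.
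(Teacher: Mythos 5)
Your proposed decoupling fails at its key step. You assert the implication ``$A_jA_i^*=0 \Rightarrow A_jX_iA_i=0$ for $i\neq j$'', justified by saying that the rows of $Y_i=X_iA_i$ lie in $\mathcal{R}_i$ while the rows of $A_j$ are orthogonal to $\mathcal{R}_i$. But the entries of $A_jY_i$ are inner products of rows of $A_j$ with \emph{columns} of $Y_i$, not with its rows; what the orthogonality actually yields is the trivial identity $A_jY_i^*=A_jA_i^*X_i^*=0$. The asserted implication is false for general $X_i$: take $A_j=(1\ \ 0)$, $A_i=(0\ \ 1)$, $X_i=(1\ \ 0)^\T$; then $A_jA_i^*=0$ but $A_jX_iA_i=(0\ \ 1)\neq 0$. (For an $X$ that actually satisfies $AXA=A$ the cross terms do vanish, but that is a \emph{consequence} of the very computation you are missing, not something that can be assumed termwise up front.) You notice the problem yourself midway (``wait, that is not yet zero''), but the attempted repair never lands: the detour through $A_i=\zeta_i\eta_i^\T$ invokes rank-one blocks, a hypothesis this lemma does not have (it appears only in the subsequent theorem); you then concede the rank-one route ``does not immediately isolate the $i=j$ term''; and the closing appeal to ``the projection argument above with $A_i^\dagger A_i$'' simply points back to the flawed step. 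As written, the proposal never derives $A_iX_iA_i=A_i$, so the inclusion is not proved.

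The correct move, which is the paper's, is to keep the whole $i$-th block equation and multiply on the right so that the \emph{rightmost} factors meet the orthogonality hypothesis: $AXA=A$ gives $A_i=\sum_{j=1}^{r}A_iX_jA_j$; postmultiplying by $A_i^*$ kills every term with $j\neq i$ because it ends in $A_jA_i^*=0$, leaving $A_iX_iA_iA_i^*=A_iA_i^*$. Then cancel $A_i^*$: with $M=(A_iX_i-I)A_i$ we have $MA_i^*=0$, hence $MM^*=M\,A_i^*(A_iX_i-I)^*=0$, so $M=0$, i.e. $A_iX_iA_i=A_i$ and $X_i\in A_i\{1\}$. (Equivalently, postmultiply by the projector $A_i^\dagger A_i$ and use $A_jA_i^\dagger=A_jA_i^*(A_iA_i^*)^\dagger=0$.) This argument needs no rank assumption on the blocks and closes the gap your proposal leaves open.
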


\begin{proof}
    Let $X=(X_1 \ |\  X_2 \ |\  \cdots \ |\  X_r) \in A\{1\}$.
        Then $AXA=A$ gives $\displaystyle A_i=\sum_{j=1}^{r}A_iX_jA_j$. 
    Postmultiplying both sides by $A_i^*$ produces $A_iX_iA_iA_i^*=A_iA_i^*$, which is equivalent to $A_iX_iA_i=A_i$. 
    Hence we obtain $X_i \in A_i\{1\}$. 

\end{proof}

Lemma \ref{lem2} can be utilized to characterize the set of $\{1\}$-inverses for specific Bohemian matrices. 
In the subsequent results, Lemma \ref{lem2} will be used to provide a complete description of $\{1\}$-inverses for the structured matrices of types {\bf S1} and {\bf S2}.

\begin{thm} \label{thm51}
    Let $A=\begin{pmatrix}
        A_1 \\ A_2
    \end{pmatrix} \in \mathbb{K}^{(m_1+m_2) \times n}$, $A_1=( 1_{m_1n})$, $A_2=(1_{m_2n_1}\ |\, -1_{m_2n_2})$ such that  $n_1=n_2$. 
Then
    \begin{equation*}
        A\{1\}=\left\{X=(X_1\ |\ X_2) \in \mathbb{K}^{n \times (m_1+m_2)} :\ \Xi(X_{11})=\Xi(X_{12})=\Xi(X_{21})=-\Xi(X_{22})=\frac{1}{2}\right\},
    \end{equation*}
    where $X_i=\begin{pmatrix}
            X_{i1}  \\            X_{i2}        \end{pmatrix} \in \mathbb{K}^{(n_1+n_2) \times m_i}$ for $i \in \{1,2\}$.
\end{thm}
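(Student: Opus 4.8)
The plan is to expand the product $AXA$ in block form, reduce the equation $AXA=A$ to two ``cross'' conditions, and evaluate those by means of Lemma \ref{multiply full matrix}. Write $n=n_1+n_2$ (recall $n_1=n_2$) and put $D:=\begin{pmatrix}I_{n_1}&0\\0&-I_{n_2}\end{pmatrix}$, so that by \eqref{eq thm 3.2} one has $A_2=(1_{m_2n})D$. With $X=(X_1\mid X_2)$ and $X_i=\begin{pmatrix}X_{i1}\\X_{i2}\end{pmatrix}$ as in the statement,
\[
AXA=\begin{pmatrix}A_1X_1A_1+A_1X_2A_2\\ A_2X_1A_1+A_2X_2A_2\end{pmatrix}.
\]

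First I would apply Lemma \ref{lem2}: every $X\in A\{1\}$ has the block form $(X_1\mid X_2)$ with $X_1\in A_1\{1\}$ and $X_2\in A_2\{1\}$. Since $A_1=(1_{m_1n})$, \cite[Theorem 2(1)]{chan2022inner} gives $A_1\{1\}=\{X_1:\Xi(X_1)=1\}$, i.e. $\Xi(X_{11})+\Xi(X_{12})=1$, and Theorem \ref{th 2.6} gives $A_2\{1\}=\{X_2:\Xi(X_{21})-\Xi(X_{22})=1\}$. Under these membership conditions $A_1X_1A_1=A_1$ and $A_2X_2A_2=A_2$, so within the set $\{X=(X_1\mid X_2):X_1\in A_1\{1\},\,X_2\in A_2\{1\}\}$ the equation $AXA=A$ holds if and only if the two cross blocks vanish: $A_1X_2A_2=0$ and $A_2X_1A_1=0$; conversely, membership of the blocks together with vanishing of the cross terms already forces $AXA=A$. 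Hence it only remains to rewrite the two vanishing conditions in terms of the $\Xi(X_{ij})$.

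For that, Lemma \ref{multiply full matrix}(1) together with $A_2=(1_{m_2n})D$ gives
\[
A_1X_2A_2=(1_{m_1n})\,X_2\,(1_{m_2n})\,D=\Xi(X_2)\,(1_{m_1n})\,D=\Xi(X_2)\,(1_{m_1n_1}\mid -1_{m_1n_2}),
\]
so $A_1X_2A_2=0$ iff $\Xi(X_2)=\Xi(X_{21})+\Xi(X_{22})=0$; combined with $\Xi(X_{21})-\Xi(X_{22})=1$ this forces $\Xi(X_{21})=\frac12$ and $\Xi(X_{22})=-\frac12$. Likewise, since $DX_1=\begin{pmatrix}X_{11}\\ -X_{12}\end{pmatrix}$,
\[
A_2X_1A_1=(1_{m_2n})\,(DX_1)\,(1_{m_1n})=\Xi(DX_1)\,(1_{m_2n})=\bigl(\Xi(X_{11})-\Xi(X_{12})\bigr)(1_{m_2n}),
\]
so $A_2X_1A_1=0$ iff $\Xi(X_{11})=\Xi(X_{12})$; combined with $\Xi(X_{11})+\Xi(X_{12})=1$ this forces $\Xi(X_{11})=\Xi(X_{12})=\frac12$. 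Collecting the four scalar identities yields exactly $\Xi(X_{11})=\Xi(X_{12})=\Xi(X_{21})=-\Xi(X_{22})=\frac12$, the asserted description; the sufficiency direction follows by reading the same equivalences backwards (the four equalities give $\Xi(X_1)=1$, $\Xi(X_{21})-\Xi(X_{22})=1$, $\Xi(X_2)=0$, $\Xi(X_{11})-\Xi(X_{12})=0$, hence $X_1\in A_1\{1\}$, $X_2\in A_2\{1\}$ and both cross terms vanish).

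I expect the only genuine subtlety to be recognizing that Lemma \ref{lem2} alone is not enough: it supplies only the two block-membership conditions (two scalar equations in the $\Xi(X_{ij})$), whereas the full characterization involves three independent conditions, the extra one arising precisely from forcing the off-diagonal blocks of $AXA$ to vanish. Everything else is a short computation resting on \eqref{eq thm 3.2} and Lemma \ref{multiply full matrix}(1).
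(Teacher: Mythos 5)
Your proposal is correct and follows essentially the same route as the paper's proof: invoke Lemma \ref{lem2} to obtain $X_i \in A_i\{1\}$, reduce $AXA=A$ (given block membership) to the vanishing of the cross terms $A_1X_2A_2$ and $A_2X_1A_1$, evaluate these via Lemma \ref{multiply full matrix} to get $\Xi(X_2)=0$ and $\Xi(X_{11})=\Xi(X_{12})$, and combine with $\Xi(X_1)=1$ and $\Xi(X_{21})-\Xi(X_{22})=1$. Your use of the factorization $A_2=(1_{m_2n})D$ with Lemma \ref{multiply full matrix}(1), and your explicit remark about the converse direction, are only minor presentational variations on the same argument.
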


\begin{proof}
 It is clear that $A_1A_2^*=0$ and therefore by Lemma \ref{lem2}, $X \in A\{1\}$ implies $X_i \in A_i\{1\}$ for $i \in \{1,2\}$. 
 Hence, let $X_i \in A_i\{1\}$. Furthermore, $AXA=A$ if and only if $A_1X_2A_2=0$ and $A_2X_1A_1=0$. 
 By Lemma \ref{multiply full matrix}, it follows  that $A_1X_2A_2=0$ if and only if $\Xi(X_2)=0$ and $A_2X_1A_1=0$ if and only if $\Xi(X_{11})=\Xi(X_{12})$. 
 Moreover, based on $X_i \in A_i\{1\}$, it follows $\Xi(X_1)=1$ and $\Xi(X_{21})-\Xi(X_{22})=1$, which concludes the proof.
\end{proof}

\begin{thm} \label{th4.5}
     Let the blocks involved in the matrix $A=\begin{pmatrix}        A_1 \\ A_2    \end{pmatrix}  \in \mathbb{K}^{(m_1+m_2) \times n}$
    satisfy $A_1=( 1_{m_1n})$, $A_2=(1_{m_2n_1}\ |\, -1_{m_2n_2}\ |\ 0_{m_2n_3})$ with $n_1=n_2$. 
In this case,
    \begin{equation*}
        A\{1\}=\left\{X=(X_1 \ |\ X_2) \in \mathbb{K}^{n \times m} :\ X_i \in A_i\{1\} \ \forall i, \ \Xi(X_{11})=\Xi(X_{12}), \ \Xi(X_{2})=0\right\},
    \end{equation*}
    where $X_i=\begin{pmatrix}
            X_{i1}  \\
            X_{i2} \\
            X_{i3}
        \end{pmatrix}\in \mathbb{K}^{(n_1+n_2+n_3) \times m_i}$ for $i \in \{1,2\}$.
\end{thm}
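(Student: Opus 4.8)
The plan is to mimic the proof of Theorem \ref{thm51}, but keeping track of the extra block of zero columns in $A_2$. First I would observe that $A_1 A_2^* = 0$: the rows of $A_1$ are $(1_{n})$ while the rows of $A_2$ are of the form $(1_{n_1} \mid -1_{n_2} \mid 0_{n_3})$ with $n_1 = n_2$, so each inner product is $n_1 - n_2 = 0$. Hence Lemma \ref{lem2} applies and any $X = (X_1 \mid X_2) \in A\{1\}$ must satisfy $X_1 \in A_1\{1\}$ and $X_2 \in A_2\{1\}$, which accounts for the constraint $X_i \in A_i\{1\}$ in the claimed set.

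Next I would expand the equation $AXA = A$ block by block. Writing $A = \begin{pmatrix} A_1 \\ A_2 \end{pmatrix}$ and $X = (X_1 \mid X_2)$, the product $AXA$ has blocks $A_i X_1 A_1 + A_i X_2 A_2$ for $i = 1, 2$. Using $X_i \in A_i\{1\}$, i.e. $A_1 X_1 A_1 = A_1$ and $A_2 X_2 A_2 = A_2$, the system $AXA = A$ reduces to the two cross conditions $A_1 X_2 A_2 = 0$ and $A_2 X_1 A_1 = 0$. Now I would apply Lemma \ref{multiply full matrix}: since $A_1 = (1_{m_1 n})$ and $A_2 = (1_{m_2 n_1} \mid -1_{m_2 n_2} \mid 0_{m_2 n_3})$, the product $A_1 X_2 A_2$ equals $\Xi(X_2)\,(1_{m_1 n_1} \mid -1_{m_1 n_2} \mid 0_{m_1 n_3})$, which vanishes if and only if $\Xi(X_2) = 0$. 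Similarly, writing $X_1 = \begin{pmatrix} X_{11} \\ X_{12} \\ X_{13} \end{pmatrix}$, the product $A_2 X_1 A_1 = (\Xi(X_{11}) - \Xi(X_{12}))\,(1_{m_2 n})$ (the $X_{13}$ block being annihilated by the zero columns of $A_2$), so this vanishes if and only if $\Xi(X_{11}) = \Xi(X_{12})$. Combining these with the membership $X_i \in A_i\{1\}$ gives exactly the description in the statement, and conversely any $X$ in that set satisfies all four reduced equations, hence $AXA = A$.

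One minor point I would be careful about is making sure Lemma \ref{multiply full matrix} is invoked with the correct partition: for $A_1 X_2 A_2$ the relevant identity is item 2 applied with the output block structure $(n_1 \mid n_2)$ together with item 1 style reasoning for the zero block, while for $A_2 X_1 A_1$ it is item 4 with $X_1$ partitioned as $(X_{11}^\T \mid X_{12}^\T \mid X_{13}^\T)^\T$ — here the presence of $X_{13}$ is harmless because the corresponding columns of $A_2$ are zero, so the sum collapses to $\Xi(X_{11}) - \Xi(X_{12})$. I do not anticipate a genuine obstacle; the proof is essentially a bookkeeping exercise parallel to Theorem \ref{thm51}, and the only thing requiring attention is confirming that the zero-column block $n_3$ of $A_2$ imposes no further constraint beyond $X_2 \in A_2\{1\}$ and $\Xi(X_2) = 0$, which is immediate from Lemma \ref{multiply full matrix}.
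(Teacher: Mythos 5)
Your proposal is correct and follows essentially the same route as the paper's proof: Lemma \ref{lem2} via $A_1A_2^*=0$ to get $X_i\in A_i\{1\}$, reduction of $AXA=A$ to the two cross conditions, and Lemma \ref{multiply full matrix} to convert them into $\Xi(X_2)=0$ and $\Xi(X_{11})=\Xi(X_{12})$. Your extra care about the zero-column block of $A_2$ when invoking Lemma \ref{multiply full matrix} is exactly the detail the paper leaves implicit ("similarly as in Theorem \ref{thm51}"), so nothing is missing.
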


\begin{proof}
    It is evident that $A_1A_2^*=0$ and hence by Lemma \ref{lem2}, $X \in A\{1\}$ implies $X_i \in A_i\{1\}$. 
    Hence, let $X_i \in A_i\{1\}$. 
    Now $AXA=A$ if and only if $A_1X_2A_2=0$ and $A_2X_1A_1=0$. 
    Similarly as in Theorem \ref{thm51}, by Lemma \ref{multiply full matrix}, $A_1X_2A_2=0$ if and only if $\Xi(X_2)=0$ and $A_2X_1A_1=0$ if and only if $\Xi(X_{11})=\Xi(X_{12})$. 
    Now, since $X_i \in A_i\{1\}$, it follows $\Xi(X_1)=1$ and $\Xi(X_{21})-\Xi(X_{22})=1$, which concludes the proof.
\end{proof}

\begin{rem}

The sets of the inner Bohemian inverses for the matrices in Theorem \ref{thm51} and \ref{th4.5} can be obtained by restricting the entries of inner inverses to the set $\mathbb{P}$, since $A_{\mathbb{P}}\{1\}=A\{1\} \cap \mathbb{P}^{n \times m}$.
\end{rem}

It is important to note that Lemma \ref{lem2} provides only necessary conditions for the set of inner inverses. 
For certain specific class of structured matrices, the set of inner inverses can be characterized as outlined in Theorems \ref{thm51} and \ref{th4.5}. 
However, this approach may not be effective for larger and more complex structured matrices, such as Class III matrices with rank greater than 2. 
It is challenging to obtain a comprehensive classification of structured matrices, similar to that we obtained for rank-two Class III matrices, in order to classify all Class III matrices of rank greater than 2.  
To address these challenges, we will provide a complete characterization of the set of inner inverses for a specific class of structured matrices.
We note that by restricting this class to Bohemian matrices, we can encompass all Class III matrices of arbitrary rank. 
Furthermore, limiting the corresponding inner inverses to Bohemian matrices will yield the set of all inner Bohemian inverses.

\begin{thm} \label{lem2 iff}
       Let $A=\begin{pmatrix}
        A_1 \\  \vdots \\ A_r
    \end{pmatrix} \in \mathbb{K}^{m \times n}$ and $A_i \in \mathbb{K}^{m_i \times n}$ such that $\ra{A_i}=1$ and ${A_i}{A_j}^*=0 $ for all $ i \neq j$, then
  \begin{equation*}
    A\{1\} = \left\{X=(X_1 \ |\  X_2 \ |\  \cdots \ |\  X_r) \in \mathbb{K}^{n \times m} :\ X_i \in A_i\{1\}, \ip{v_i}{e_j} =0 \ \forall i,j, i \neq j \right\},
\end{equation*}
where $A_i=u_iv_i^\T$ such that $u_i\in \mathbb{K}^{m_i \times 1}$, $v_i \in \mathbb{K}^{n \times 1}$ and $e_i :=X_iu_i \in \mathbb{K}^{n \times 1}$ for all $i=1,\ldots, r$. 

\end{thm}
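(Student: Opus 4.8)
The plan is to establish the two inclusions separately, exploiting the orthogonality condition $A_iA_j^* = 0$ to decouple the single matrix equation $AXA = A$ into per-block conditions plus a cross-block constraint. First I would write $X = (X_1 \mid \cdots \mid X_r)$ in the natural block-column partition induced by the row partition of $A$, so that $AX = \sum_{j} A_i X_j$ in block $(i,\cdot)$ and $AXA = A$ becomes, block by block, $\sum_{j=1}^r A_i X_j A_j = A_i$ for each $i$. Postmultiplying by $A_i^*$ and using $A_iA_j^* = 0$ for $j \neq i$ kills all cross terms, leaving $A_i X_i A_i A_i^* = A_i A_i^*$. Since $A_i$ has rank one, $A_i A_i^*$ is a nonzero scalar times $u_i u_i^*$ (writing $A_i = u_i v_i^\T$), and one checks $A_iX_iA_iA_i^* = A_iA_i^*$ forces $A_iX_iA_i = A_i$; this reproduces Lemma~\ref{lem2}, i.e.\ $X_i \in A_i\{1\}$ is necessary.

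The new content is pinning down exactly when the remaining cross terms vanish. Having $X_i \in A_i\{1\}$, the equation $\sum_j A_i X_j A_j = A_i$ reduces to $\sum_{j \neq i} A_i X_j A_j = 0$ for every $i$. Substituting $A_j = u_j v_j^\T$ and $A_i = u_i v_i^\T$ gives $\sum_{j\neq i} u_i (v_i^\T X_j u_j) v_j^\T = 0$, i.e.\ $u_i \big( \sum_{j\neq i} \langle v_i, e_j\rangle\, v_j^\T\big) = 0$ with $e_j := X_j u_j$. Since $u_i \neq 0$ and the vectors $\{v_j\}$ are linearly independent (they span the row space of the rank-$r$ matrix $A$), this holds if and only if $\langle v_i, e_j\rangle = 0$ for all $j \neq i$. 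Collecting this over all $i$ gives precisely the stated condition $\langle v_i, e_j\rangle = 0$ for all $i \neq j$, and I would note the (mild) subtlety that $\langle v_i, v_i \rangle$ need not vanish, so the $j = i$ terms are exactly what reconstruct $A_i$ via $X_i \in A_i\{1\}$. Conversely, if $X_i \in A_i\{1\}$ for all $i$ and $\langle v_i, e_j \rangle = 0$ for $i \neq j$, then reversing the computation shows each block equation $\sum_j A_i X_j A_j = A_i$ holds, hence $AXA = A$.

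The main obstacle — really the only place care is needed — is the rank-one bookkeeping in the step ``$A_iX_iA_iA_i^* = A_iA_i^* \Rightarrow A_iX_iA_i = A_i$'' and the linear-independence argument that turns $u_i(\cdots)v^\T$-type identities into the scalar conditions $\langle v_i, e_j\rangle = 0$. For the former, one argues that $A_iX_iA_i - A_i$ has its columns in the column space of $u_i$, so equals $u_i w^\T$ for some $w$; postmultiplying the hypothesis by $A_i^*$ and using that $v_i \neq 0$ (so $A_iA_i^* \neq 0$) forces $w = 0$. For the latter, the independence of $\{u_1,\dots,u_r\}$ is not literally available across blocks (the $u_i$ live in different coordinate blocks), but within each block equation only a single $u_i$ appears as a left factor, so nonvanishing of $u_i$ suffices; the independence of $\{v_1,\dots,v_r\}$ is the genuine hypothesis used, and it is guaranteed because $\mathrm{rank}(A) = r$. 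Once these two points are handled, both inclusions are immediate and the proof is complete.
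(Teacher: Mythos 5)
Your argument is correct and follows essentially the same route as the paper: decouple $AXA=A$ blockwise using $A_iA_j^*=0$ (the paper invokes its Lemma~\ref{lem2} for the necessity of $X_i\in A_i\{1\}$, which you re-derive by postmultiplying with $A_i^*$), then reduce the cross terms to $u_i\sum_{j\neq i}\ip{v_i}{e_j}v_j^\T=0$ and conclude from $u_i\neq 0$ together with the independence of the $v_j$. The only small adjustment: that independence should be justified directly from $A_iA_j^*=0$, which forces the nonzero vectors $v_j$ to be pairwise orthogonal (exactly what the paper uses), rather than from $\ra{A}=r$, which is not a stated hypothesis of the theorem but only a consequence of it.
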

\begin{proof}
    Let $X_i \in A_i\{1\}$. Then $AXA=A$ if and only if $\displaystyle A_i=\sum_{j=1}^{r}A_iX_jA_j$ for all $ i \in \{1,\dots, r\}$ if and only if $\displaystyle \sum_{j=1, j \neq i}^{r}A_iX_jA_j=0$. Furthermore, since $X_iA_i$'s are idempotent, $X_iA_i$ can be written as 
    \begin{equation*}
        X_iA_i=X_iu_iv_i^\T= e_iv_i^\T \text{ such that } \ip{e_i}{v_i}=1.
    \end{equation*}
Therefore, 
\begin{equation}
    \sum_{j=1, j \neq i}^{r}A_iX_jA_j=\sum_{j=1, j \neq i}^{r} u_iv_i^\T e_jv_j^\T=u_i \sum_{j=1, j \neq i}^{r}  \ip{v_i}{e_j}  v_j^\T.
\end{equation}
Since $u_i \neq 0$ and $\ip{v_i}{v_j} =0 $ for all $i\neq j$, it follows that $\displaystyle \sum_{j=1, j \neq i}^{r}A_iX_jA_j=0$ if and only if $\ip{v_i}{e_j} =0$ for all $i\neq j$.
\end{proof}

Theorem \ref{Thm48Boh} provides a complete characterization of $\{1\}$-inverses for a rank-two Class III matrix with structure {\bf S3}.
This characterization is based on the results presented in Theorem \ref{lem2 iff}.

\begin{thm}\label{Thm48Boh}
      Let $A=\begin{pmatrix}
        A_1 \\ A_2
    \end{pmatrix}  \in \mathbb{K}^{(m_1+m_2) \times n}$ be a block matrix such that 
    $A_1=( 1_{m_1n_1}\ |\ 1_{m_1n_2}\ |\ 1_{m_1n_3}\ |\ 0_{m_1n_4})$, $A_2=(1_{m_2n_1}\ |\, -1_{m_2n_2}\ |\ 0_{m_2n_3} \ |\ 1_{m_2n_4} )$. 
Then $X=(X_1 \ |\ X_2) \in \mathbb{K}^{n \times (m_1+m_2)}$ satisfies $X \in A\{1\}$ if and only if 
\begin{enumerate}
    \item[$(a)$] $\Xi(X_{13})=1-\Xi(X_{11})-\Xi(X_{12}),$
    \item[$(b)$] $\Xi(X_{14})=\Xi(X_{12})-\Xi(X_{11}),$
    \item[$(c)$] $-\Xi(X_{23})=\Xi(X_{21})+\Xi(X_{22}),$
    \item[$(d)$] $\Xi(X_{24})=1+\Xi(X_{22})-\Xi(X_{21}),$
\end{enumerate}
where $X_i=\begin{pmatrix}
            X_{i1}  \\
            X_{i2} \\
            X_{i3} \\
            X_{i4}
        \end{pmatrix} \in \mathbb{K}^{(n_1+n_2+n_3+n_4) \times m_i}$ for $i \in \{1,2\}$.
\end{thm}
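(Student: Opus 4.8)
The plan is to obtain the equivalence as a direct specialization of Theorem \ref{lem2 iff}. Since $A$ has the structure {\bf S3}, it is a rank-two Class III matrix; in particular $\ra{A_1}=\ra{A_2}=1$ and $A_1A_2^*=0$ (this forces $n_1=n_2$, which is exactly what makes the common row of $A_1$ orthogonal to the common row of $A_2$). Write $A_i=u_iv_i^\T$ with $u_i\in\mathbb{K}^{m_i\times1}$ the all-ones vector and $v_i\in\mathbb{K}^{n\times1}$ the repeated row of $A_i$, so that along the blocks of sizes $n_1,n_2,n_3,n_4$ the vector $v_1$ has the sign pattern $(+,+,+,0)$ and $v_2$ has the sign pattern $(+,-,0,+)$. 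Then Theorem \ref{lem2 iff} applies and asserts that $X=(X_1\mid X_2)\in A\{1\}$ if and only if $X_i\in A_i\{1\}$ for $i\in\{1,2\}$ and $\ip{v_1}{e_2}=\ip{v_2}{e_1}=0$, where $e_i:=X_iu_i$.

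The remaining work is to translate these four scalar conditions into $(a)$--$(d)$. Since $e_i=X_iu_i$, its $k$-th block is $X_{ik}u_i$, i.e. the column of row sums of $X_{ik}$; summing the entries of that block gives $\Xi(X_{ik})$. Hence, pairing $v_j$ against $e_i$ block by block and reading off the sign patterns above,
\begin{align*}
\ip{v_1}{e_1}&=\Xi(X_{11})+\Xi(X_{12})+\Xi(X_{13}), & \ip{v_1}{e_2}&=\Xi(X_{21})+\Xi(X_{22})+\Xi(X_{23}),\\
\ip{v_2}{e_1}&=\Xi(X_{11})-\Xi(X_{12})+\Xi(X_{14}), & \ip{v_2}{e_2}&=\Xi(X_{21})-\Xi(X_{22})+\Xi(X_{24}).
\end{align*}
Moreover, since $A_i=u_iv_i^\T\neq0$, the identity $A_iX_iA_i=A_i$ is equivalent to $v_i^\T X_iu_i=1$, that is $\ip{v_i}{e_i}=1$. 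Substituting the displayed formulas: $\ip{v_1}{e_1}=1$ is condition $(a)$, $\ip{v_2}{e_2}=1$ is condition $(d)$, $\ip{v_1}{e_2}=0$ is condition $(c)$, and $\ip{v_2}{e_1}=0$ is condition $(b)$. This establishes the theorem.

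The argument is essentially bookkeeping, and I do not anticipate a serious obstacle. The one point requiring care is the correct block-by-block sign accounting in the four inner products $\ip{v_j}{e_i}$, together with the preliminary observation that the {\bf S3} hypothesis yields $A_1A_2^*=0$, without which Theorem \ref{lem2 iff} could not be invoked.
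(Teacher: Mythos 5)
Your proposal is correct and follows essentially the same route as the paper: write $A_i=u_iv_i^\T$, invoke Theorem \ref{lem2 iff} (noting the orthogonality $A_1A_2^*=0$, i.e.\ $n_1=n_2$, implicit in the {\bf S3}/Class III setting), and translate the conditions $\ip{v_i}{e_i}=1$ and $\ip{v_j}{e_i}=0$ into the sums $\Xi(X_{ik})$, with all four inner products computed correctly. Your only (harmless) deviation is deriving $(a)$ and $(d)$ directly from the rank-one factorization rather than citing the known description of $A_i\{1\}$ for full matrices, which the paper leaves implicit.
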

\begin{proof}
    Suppose $A_1=u_1v_1^\T$ and $A_2=u_2v_2^\T$. 
    Then $u_1=(1_{m_11}), u_2=(1_{m_21})$, $v_1^\T=( 1_{1(n_1+n_2+n_3)}\ |\ 0_{1n_4})$ and $v_2^\T=(1_{1n_1}\ |\, -1_{1n_2}\ |\ 0_{1n_3}\ |\ 1_{1n_4} )$. 
 For $X = (X_1\ |\  X_2) \in \mathbb{K}^{n \times (m_1+m_2)}$, it follows $e_i=X_iu_i=(R_1^i\ |\ R_2^i \ |\  \cdots \ |\  R_n^i)^\T$, where $R_j^i$ represents the sum of $j$th row of $X_i$. 
Therefore, by Theorem \ref{lem2 iff}, $\ip{v_1}{e_2} =0$ gives $-\Xi(X_{23})=\Xi(X_{21})+\Xi(X_{22})$ and $\ip{v_2}{e_1} =0$ gives $\Xi(X_{14})=\Xi(X_{12})-\Xi(X_{11})$. 
The remaining conditions $(a)$ and $(d)$ are a consequence of $X_i \in A_i\{1\}$. 
    This concludes the proof.
\end{proof}

In Example \ref{Exm410Boh}, we take a rank-three matrix from Class III and find its inner inverses using Theorem \ref{lem2 iff} to demonstrate the applicability of the result for higher rank matrices.
\begin{ex}\label{Exm410Boh}
    Let the blocks in $A=\begin{pmatrix}
        A_1 \\ A_2 \\ A_3
    \end{pmatrix}  \in \mathbb{K}^{(m_1+m_2+m_3) \times n}$ satisfy 
$A_1=( 1_{m_1n_1}\ |\ 1_{m_1n_2}\ |\ 1_{m_1n_3}\ |\ 1_{m_1n_4})$, $A_2=(1_{m_2n_1}\ |\ 1_{m_2n_2}\ |\, -1_{m_2n_3}\ |\, -1_{m_2n_4} )$ and $A_3=(1_{m_3n_1}\ |\, -1_{m_3n_2}\ |\ 0_{m_3n_3}\ |\ 0_{m_3n_4})$. 
Let $X= (X_1 \ |\  X_2 \ |\  X_3) \in \mathbb{K}^{n \times (m_1+m_2+m_3)}$ and $X_i=\begin{pmatrix}
            X_{i1}  \\            X_{i2} \\            X_{i3} \\            X_{i4}
        \end{pmatrix} \in \mathbb{K}^{(n_1+n_2+n_3+n_4) \times m_i}$ for $i \in \{1,2,3\}$. 
Clearly $A_iA_j^*=0$ for all $i  \neq j$. Since $A_i$'s are rank-one, they can be represented as outer products $A_i=u_iv_i^\T$, for all $i$.
Here, 
$u_1=(1_{m_11}), u_2=(1_{m_21}), u_3=(1_{m_31})$ and $v_1=(1_{n1}), v_2=(1_{n_11}\ |\ 1_{n_21}\ |\, -1_{n_31}\ |\, -1_{n_41} ), v_3=(1_{n_11}\ |\, -1_{n_21}\ |\ 0_{n_31}\ |\ 0_{n_41})$. 
Hence $e_i=X_iu_i=(R_1^i \ |\  R_2^i \ |\  \cdots \ |\  R_n^i)^\T$,     where $R_j^i$ represents the sum of $j$th row of $X_i$. 
    
    Now, $\ip{v_1}{e_2}=0$ $\iff$ $\Xi(X_2)=0$, $\ip{v_1}{e_2}=0$ $\iff$ $\Xi(X_3)=0$, $\ip{v_2}{e_1}=0$ $\iff$ $\Xi(X_{11})+\Xi(X_{12})-\Xi(X_{13})-\Xi(X_{14})=0$, $\ip{v_2}{e_3}=0$ $\iff$ $\Xi(X_{31})+\Xi(X_{32})-\Xi(X_{33})-\Xi(X_{34})=0$, $\ip{v_3}{e_1}=0$ $\iff$ $\Xi(X_{11})=\Xi(X_{12)}$ and $\ip{v_3}{e_2}=0$ $\iff$ $\Xi(X_{21})=\Xi(X_{22)}$.
    
    Moreover $X_i \in A_i\{1\}$ gives $\Xi(X_1)=1, \Xi(X_{21})+\Xi(X_{22})-\Xi(X_{23})-\Xi(X_{24})=1$ and $\Xi(X_{31})-\Xi(X_{32})=1$. Hence we conclude $ X \in A\{1\}$ if and only if
    \begin{enumerate}
        \item $\Xi(X_{11})=\Xi(X_{12})=\frac{1}{4}$,\ \ $\Xi(X_{13})+\Xi(X_{14})= \frac{1}{2}$,
        \item  $\Xi(X_{21})=\Xi(X_{22})=\frac{1}{4}$,\ \ $\Xi(X_{23})+\Xi(X_{24})= -\frac{1}{2}$,
        \item $\Xi(X_{31})=-\Xi(X_{32})=\frac{1}{2}$,\ \ $\Xi(X_{33})+\Xi(X_{34})= 0$.
    \end{enumerate}
\end{ex}

\section{Outer Bohemian inverses}\label{sec5}

The characterizations of outer Bohemian inverses remain an area that has yet to be fully explored. 
As noted in \cite{chan2022inner}, identifying outer Bohemian inverses requires working with affine algebraic varieties rather than linear varieties, which presents a significant challenge. 
However, we have obtained the complete set of outer Bohemian inverses for rank-one Bohemian matrices.  
Consequently, we will focus on specific subsets of outer Bohemian inverses for higher-rank matrices.
In this section, we comprehensively characterize outer Bohemian inverses for rank-one Bohemian matrices and present the cardinalities of outer Bohemian matrices for full matrices. 
Additionally, the literature indicates that outer inverses with prescribed ranks have been extensively studied \cite{weibook}. 
As a result, the rank-one outer Bohemian inverses of arbitrary rank $r$ generalized well-settled matrices are obtained. 
Furthermore, representations of outer and inner Bohemian inverses with rank one and rank $r$ are provided for a specific class of matrices.

\subsection{Rank-one Bohemian matrices}

For matrices $A \in \mathbb{K}^{m \times n}$ and $X  \in \mathbb{K}^{n \times m}$ satisfying $XAX=X$, it is straightforward to observe the rank inequality $\ra{X}\leq \ra{A}$. 
Hence when considering the rank-one matrix $A$, particularly when considering full matrices, the rank of $X$ can be either 0 (which is the trivial case) or 1. 
Throughout this subsection, a rank-one matrix $A \in \mathbb{K}^{m \times n}$ will be considered as the outer product $A=\zeta \eta^\T$ where $\zeta=(\zeta_i) \in \mathbb{K}^{m \times 1}$ and $\eta =(\eta_i) \in \mathbb{K}^{n \times 1}$. 
Likewise, throughout the subsection, let $X=pq^{\T}$ denote the rank-one outer inverse of $A$, where $p=(p_i)\in \mathbb{K}^{n \times 1}$ and $q=(q_i) \in \mathbb{K}^{m \times 1}$. 
When $X\in \mathbb{P}^{n \times m}$, the vectors $p$ and $q$ can be restricted to Bohemian vectors of the appropriate order. 
The results that follow characterize outer inverses for rank-one Bohemian matrices and provide the cardinality of the set of outer Bohemian inverses with different populations $\mathbb{P}$. 
In Theorem \ref{main 1} we will present a representation for outer inverses of rank-one matrices.

\begin{thm} \label{main 1}
Let $A=\zeta \eta^\T$, where $\zeta \in \mathbb{K}^{m \times 1}$ and $\eta \in \mathbb{K}^{n \times 1}$. Then

\begin{equation*}
    A\{2\} \setminus \{0\}=\left\{X=pq^\T \in \mathbb{K}^{n \times m} : \ip{q}{\zeta} \ip{\eta}{p} =1 \right\}.
\end{equation*}
\end{thm}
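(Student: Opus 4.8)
The plan is to verify the set equality by direct substitution into the outer-inverse equation $XAX=X$. Write $A=\zeta\eta^{\T}$ and $X=pq^{\T}$ for nonzero vectors $p,q$; since we work modulo the trivial case, we may assume $p\neq 0$ and $q\neq 0$, so that $X=pq^{\T}$ is exactly the generic form of a rank-one matrix in $\mathbb{K}^{n\times m}$.

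First I would compute the triple product explicitly:
\begin{equation*}
  XAX = (pq^{\T})(\zeta\eta^{\T})(pq^{\T}) = p\,(q^{\T}\zeta)\,(\eta^{\T}p)\,q^{\T} = \ip{q}{\zeta}\,\ip{\eta}{p}\; pq^{\T},
\end{equation*}
using associativity and treating the scalars $q^{\T}\zeta=\ip{q}{\zeta}$ and $\eta^{\T}p=\ip{\eta}{p}$ as numbers that can be pulled out. (In the complex case the inner product $\ip{q}{\zeta}=q^{*}\zeta$ carries a conjugation, but since the statement as written uses $\zeta^{\T},\eta^{\T}$ in defining $A$ and $X$, the relevant bilinear pairings are $q^{\T}\zeta$ and $\eta^{\T}p$; I would simply be careful to match the notation $\ip{\cdot}{\cdot}$ used in the rest of the paper.) Thus $XAX=X$ holds if and only if $\ip{q}{\zeta}\ip{\eta}{p}\,pq^{\T}=pq^{\T}$.

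Next I would argue the equivalence of $\ip{q}{\zeta}\ip{\eta}{p}\,pq^{\T}=pq^{\T}$ with the scalar condition $\ip{q}{\zeta}\ip{\eta}{p}=1$. One direction is immediate: if the scalar product equals $1$ then $XAX=X$. For the converse, since $X=pq^{\T}\neq 0$ there is an entry $p_k q_\ell\neq 0$; comparing that entry on both sides of $\lambda\,pq^{\T}=pq^{\T}$ with $\lambda:=\ip{q}{\zeta}\ip{\eta}{p}$ forces $\lambda=1$. This also shows that any such $X$ automatically has rank one (it is nonzero by construction), consistent with the rank inequality $\ra{X}\le\ra{A}=1$ noted just before the theorem. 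Finally I would observe that every element of $A\{2\}\setminus\{0\}$ is a nonzero matrix $X$ with $\ra{X}\le 1$, hence $X=pq^{\T}$ for some nonzero $p,q$, so the displayed set on the right captures all of them; conversely each matrix in that set is a genuine $\{2\}$-inverse and nonzero. This yields the claimed equality.

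The computation itself is routine; the only point requiring mild care — and the place where I expect a referee to look closely — is the nondegeneracy bookkeeping: ensuring that restricting to $A\{2\}\setminus\{0\}$ legitimately lets us assume $p\neq 0$, $q\neq 0$, and that the "compare a nonzero entry" step correctly extracts $\lambda=1$ rather than merely $\lambda pq^{\T}=pq^{\T}$ as matrices. There is also a minor representational subtlety, namely that the pair $(p,q)$ is determined by $X$ only up to the rescaling $(p,q)\mapsto(\alpha p,\alpha^{-1}q)$, but since the condition $\ip{q}{\zeta}\ip{\eta}{p}=1$ is invariant under this rescaling, it is well posed as a description of the set, and no further normalization is needed for this theorem.
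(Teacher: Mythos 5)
Your proposal is correct and follows essentially the same route as the paper: write the candidate as $X=pq^{\T}$, compute $XAX=(q^{\T}\zeta)(\eta^{\T}p)\,pq^{\T}$, and reduce $XAX=X$ to the scalar condition $\ip{q}{\zeta}\ip{\eta}{p}=1$. Your additional bookkeeping (justifying that every nonzero outer inverse has rank one and hence the form $pq^{\T}$, and extracting $\lambda=1$ by comparing a nonzero entry) only makes explicit what the paper leaves implicit.
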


\begin{proof} Consider a non-zero outer inverse of $A$ in the form $X=pq^\T$.
Then 
\[XAX=p(q^\T \zeta) (\eta^\T p)q^\T=\alpha\beta pq^\T=pq^\T.\]
Define the numbers $\displaystyle \alpha=q^\T \zeta=\sum_{i=1}^{m}{q_i\zeta_i}$ and $\displaystyle \beta=\eta^\T p=\sum_{j=1}^{n}{\eta_jp_j}$. 
Then $XAX=X$ if and only if $\alpha\beta=1$, i.e., 
$q^\T \zeta\cdot \eta^\T p=\displaystyle \sum_{i=1}^{m}{q_i\zeta_i}\sum_{j=1}^{n}{\eta_jp_j}=\ip{q}{\zeta} \ip{\eta}{p}=1$.
\end{proof}

It has been noted in Remark \ref{remark for rankone} that determining outer inverses of full matrices of type I and type III is sufficient to establish outer inverses of rank-one Bohemian matrices.
In the subsequent corollaries, the outer inverses of the full matrix of type I are obtained, and the number of outer Bohemian inverses for the full matrix is computed in relation to various populations.
\begin{cor} \label{cor5.2}
    For $A=(1_{mn}) \in \mathbb{K}^{m \times n}$, it follows
        $$A\{2\} \setminus \{0\}=\left\{X= pq^\T \in \mathbb{K}^{n \times m}:\ \sum_{i=1}^{m}{q_i}\sum_{j=1}^{n}{p_j}=1 \right\}.$$
\end{cor}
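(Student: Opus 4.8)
The plan is to derive this as a direct specialization of Theorem \ref{main 1}. Since $A=(1_{mn})$ is exactly the rank-one matrix $A=\zeta\eta^\T$ with the particular choice $\zeta=(1_{m1})$ and $\eta=(1_{n1})$ (both all-ones vectors), the strategy is simply to substitute these vectors into the characterization already established.

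First I would observe that $A=(1_{mn})=\zeta\eta^\T$ where $\zeta\in\mathbb{K}^{m\times 1}$ and $\eta\in\mathbb{K}^{n\times 1}$ are the vectors with all entries equal to $1$. Then I would apply Theorem \ref{main 1} verbatim: a nonzero matrix $X=pq^\T\in\mathbb{K}^{n\times m}$ lies in $A\{2\}$ if and only if $\ip{q}{\zeta}\ip{\eta}{p}=1$. The final step is to evaluate the two inner products for this specific $\zeta$ and $\eta$: since $\zeta$ is the all-ones vector, $\ip{q}{\zeta}=q^\T\zeta=\sum_{i=1}^{m}q_i$, and since $\eta$ is the all-ones vector, $\ip{\eta}{p}=\eta^\T p=\sum_{j=1}^{n}p_j$. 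Hence the condition $\ip{q}{\zeta}\ip{\eta}{p}=1$ becomes $\sum_{i=1}^{m}q_i\sum_{j=1}^{n}p_j=1$, which is exactly the claimed description of $A\{2\}\setminus\{0\}$.

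There is essentially no obstacle here — this corollary is a routine instantiation of the preceding theorem, and the only thing to be careful about is matching the dimension conventions ($p\in\mathbb{K}^{n\times 1}$, $q\in\mathbb{K}^{m\times 1}$, so that $pq^\T\in\mathbb{K}^{n\times m}$) and noting that the inner product with an all-ones vector collapses to the ordinary sum of coordinates. One could equally well reprove it from scratch by the same two-line computation used in the proof of Theorem \ref{main 1}, writing $XAX=p(q^\T\zeta)(\eta^\T p)q^\T$ and noting that for the all-ones $\zeta,\eta$ the scalars are $\sum_i q_i$ and $\sum_j p_j$; but invoking Theorem \ref{main 1} is cleaner.

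\begin{proof}
Write $A=(1_{mn})=\zeta\eta^\T$ where $\zeta\in\mathbb{K}^{m\times 1}$ and $\eta\in\mathbb{K}^{n\times 1}$ are the vectors all of whose entries equal $1$. Applying Theorem \ref{main 1}, a nonzero matrix $X=pq^\T\in\mathbb{K}^{n\times m}$ with $p\in\mathbb{K}^{n\times 1}$ and $q\in\mathbb{K}^{m\times 1}$ satisfies $X\in A\{2\}$ if and only if $\ip{q}{\zeta}\ip{\eta}{p}=1$. Since $\zeta$ and $\eta$ are the all-ones vectors, we have $\ip{q}{\zeta}=q^\T\zeta=\sum_{i=1}^{m}q_i$ and $\ip{\eta}{p}=\eta^\T p=\sum_{j=1}^{n}p_j$. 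Therefore the condition becomes $\sum_{i=1}^{m}q_i\sum_{j=1}^{n}p_j=1$, which is the desired description of $A\{2\}\setminus\{0\}$.
\end{proof}
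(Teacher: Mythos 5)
Your proof is correct and takes exactly the same route as the paper: the paper's own proof simply notes that $A=(1_{mn})=(1_{m1})(1_{n1})^\T$ and invokes Theorem \ref{main 1}, which is precisely your specialization with the all-ones vectors $\zeta$ and $\eta$. Your version just spells out the evaluation of the inner products as coordinate sums, which the paper leaves implicit.
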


\begin{proof}
    The proof follows from Theorem \ref{main 1} since the matrix $A$ can be written as the outer product $A=(1_{mn})=(1_{m1})(1_{n1})^\T$.
\end{proof}

\begin{cor} 
    For $A=(1_{mn}) \in \mathbb{K}^{m \times n}$, let $ 1 \in \mathbb{P} \subset  \mathbb{N}$.
    \begin{enumerate}
       
        \item If $0 \in \mathbb{P}$, then $\#(A_{\mathbb{P}}\{2\})=mn+1$.
        \item If $0 \notin \mathbb{P}$, then $\#(A_{\mathbb{P}}\{2\})=\begin{cases}
        1, & \text{if } m=n=1\\
        0, & \text{otherwise}
        \end{cases}$.
    \end{enumerate}
\end{cor}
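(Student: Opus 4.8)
The plan is to apply Corollary \ref{cor5.2}, which gives
$$A\{2\}\setminus\{0\}=\left\{X=pq^\T\in\mathbb{K}^{n\times m}:\ \Bigl(\textstyle\sum_{i=1}^m q_i\Bigr)\Bigl(\textstyle\sum_{j=1}^n p_j\Bigr)=1\right\},$$
and then to count how many such $X$ have all entries in $\mathbb{P}$. First I would observe that the trivial outer inverse $X=0$ is always Bohemian since $0\in\mathbb{N}$ is not required but $1\in\mathbb{P}\subset\mathbb{N}$ means $\mathbb{P}$ consists of positive integers; I must be careful here, so the first step is to pin down exactly which scalars are available. Since $\mathbb{P}\subseteq\mathbb{N}$ and $1\in\mathbb{P}$, every entry $p_j,q_i$ lies in $\mathbb{P}$, hence is a positive integer (at least $1$) unless $0\in\mathbb{P}$, in which case entries may also be $0$.

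For part (1), when $0\in\mathbb{P}$: the zero matrix contributes $1$ element. For a nonzero Bohemian outer inverse $X=pq^\T$ we need $(\sum q_i)(\sum p_j)=1$ with $\sum q_i,\sum p_j$ being sums of elements of $\mathbb{P}\subseteq\mathbb{N}$; since both factors are nonnegative integers whose product is $1$, each factor equals $1$. A sum of nonnegative integers from $\mathbb{P}$ equals $1$ precisely when exactly one summand is $1$ and the rest are $0$ (this uses $1\in\mathbb{P}$ and $0\in\mathbb{P}$, and that $1$ is the smallest positive value so no other combination works). Thus $q$ is a standard basis vector $e_k\in\mathbb{K}^{m\times1}$ ($m$ choices) and $p$ is a standard basis vector $e_\ell\in\mathbb{K}^{n\times1}$ ($n$ choices), giving $mn$ distinct rank-one matrices $X=e_\ell e_k^\T$, all of which are genuinely distinct from each other and from $0$. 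Adding the zero matrix yields $\#(A_\mathbb{P}\{2\})=mn+1$.

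For part (2), when $0\notin\mathbb{P}$: now the zero matrix is not Bohemian, and every entry of any candidate $X=pq^\T$ must be a positive integer, so $p$ and $q$ have all entries $\geq1$. Then $\sum_{i=1}^m q_i\geq m$ and $\sum_{j=1}^n p_j\geq n$, so their product is $\geq mn$. The equation $(\sum q_i)(\sum p_j)=1$ therefore forces $mn\leq1$, i.e. $m=n=1$; and in that case $X=p_1q_1$ with $p_1q_1=1$ and $p_1,q_1\in\mathbb{P}\subseteq\mathbb{N}$ forces $p_1=q_1=1$, so there is exactly one outer Bohemian inverse, namely $X=(1)$. If $m>1$ or $n>1$ there are none. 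This gives the stated piecewise formula.

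The main obstacle — really just a subtlety rather than a difficulty — is the careful handling of the factorization $(\sum q_i)(\sum p_j)=1$ over the nonnegative integers and the interplay between the cases $0\in\mathbb{P}$ versus $0\notin\mathbb{P}$: in the first case one must argue that a sum of elements of $\mathbb{P}$ equalling $1$ pins each vector down to a coordinate vector (which needs $1$ to be the minimal positive element of $\mathbb{N}$), and in the second case one must rule out all but the $1\times1$ situation by the lower bound $\sum q_i\geq m$, $\sum p_j\geq n$. One should also remark that distinct choices of the pair $(k,\ell)$ yield distinct matrices $e_\ell e_k^\T$, so no overcounting occurs.
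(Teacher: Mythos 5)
Your proposal is correct and follows essentially the same route as the paper: apply Corollary \ref{cor5.2}, observe that the product of the two nonnegative integer sums equals $1$ forces both sums to equal $1$ (the paper phrases this as discarding the $-1,-1$ case since $-1\notin\mathbb{P}$), count the $mn$ resulting rank-one matrices plus the zero matrix, and deduce part (2) by the same sum constraint. You merely spell out a couple of steps the paper leaves terse (that a sum of elements of $\mathbb{P}\subseteq\mathbb{N}$ equal to $1$ forces a standard basis vector, and the lower bound $\sum q_i\geq m$, $\sum p_j\geq n$ when $0\notin\mathbb{P}$), which is a faithful filling-in rather than a different argument.
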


\begin{proof}
\begin{enumerate}
   
\item Given $\mathbb{P} \subset \mathbb{N}$ and assume $0 \in \mathbb{P}$. 
From Corollary \ref{cor5.2}, $X= pq^\T \in A\{2\}$ if and only if $\sum_{i=1}^{m}{q_i}\sum_{j=1}^{n}{p_j}=1$. 
    Moreover $X\in A_{\mathbb{P}}\{2\}=A{\{2\}} \cap \mathbb{P}^{n \times m}$ implies $\sum_{i=1}^{m}{q_i}\sum_{j=1}^{n}{p_j}=1$, if either $\sum_{i=1}^{m}{q_i}=1\sum_{j=1}^{n}{p_j}=1$ or $\sum_{i=1}^{m}{q_i}=\sum_{j=1}^{n}{p_j}=-1$. 
But since $-1 \notin \mathbb{P}$, only the former case remains. 
    Since, there are $m$ such $p$ vectors and $n$ such $q$ vectors and hence the result.
\item In the case $0 \notin \mathbb{P}$, from the proof of part $1$ we conclude $m=n=1$. 
Hence $A_{\mathbb{P}}\{2\} \neq \emptyset $ if and only if $mn=1$.
\end{enumerate}
\end{proof}

\begin{cor} \label{cor 5.4}
     For $A=(1_{mn}) \in \mathbb{K}^{m \times n}$, if $\mathbb{P}=\{0, \pm{1} \}$.
    $$\#(A_{\mathbb{P}}\{2\} \setminus \{0\})= \left(\sum_{s_1=0}^{\lfloor{m-1}/{2}\rfloor} {m \choose s_1}{{m-s_1} \choose {s_1+1}}\right) \left(\sum_{s_1=0}^{\lfloor{n-1}/2\rfloor} {n \choose s_1}{{n-s_1} \choose {1+s_1}}\right).$$
   
\end{cor}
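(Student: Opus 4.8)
\emph{Proof proposal.} The plan is to convert the enumeration into a count of Bohemian vectors with a prescribed coordinate sum, using Corollary \ref{cor5.2}. That corollary says that, writing $X=pq^\T$ with $p\in\mathbb K^{n\times1}$ and $q\in\mathbb K^{m\times1}$, one has $X\in A\{2\}\setminus\{0\}$ precisely when $\bigl(\sum_{i=1}^{m}q_i\bigr)\bigl(\sum_{j=1}^{n}p_j\bigr)=1$. So the first task is to understand, for a Bohemian matrix $X$, how many \emph{Bohemian} factorizations $X=pq^\T$ it admits, so that a count of matrices can be traded for a count of pairs.

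First I would record three elementary facts. (i) If $p\in\mathbb P^{n\times1}$ and $q\in\mathbb P^{m\times1}$, then $pq^\T\in\mathbb P^{n\times m}$, because each entry $p_iq_j$ lies in $\{0,\pm1\}\cdot\{0,\pm1\}\subseteq\{0,\pm1\}$. (ii) Every rank-one $X\in\mathbb P^{n\times m}$ factors this way: pick an entry $X_{i_0j_0}\in\{\pm1\}$, let $q^\T$ be the $i_0$-th row of $X$ and set $p_i:=X_{ij_0}/X_{i_0j_0}$; since all rows of $X$ are proportional to row $i_0$, each $p_i\in\{0,\pm1\}$ and $X=pq^\T$. (iii) For a nonzero matrix the factorization is unique up to the simultaneous sign change $(p,q)\mapsto(-p,-q)$: if $pq^\T=p'q'^\T\neq0$, then $p'=\lambda p$ and $q'=\lambda^{-1}q$ for some scalar $\lambda\neq0$, and asking that $\lambda p$ be Bohemian while $p$ is a nonzero Bohemian vector forces $\lambda\in\{\pm1\}$, both values actually occurring. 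Hence $(p,q)\mapsto pq^\T$ is a well-defined, surjective, exactly two-to-one map from $\{(p,q)\in\mathbb P^{n\times1}\times\mathbb P^{m\times1}:(\sum_i q_i)(\sum_j p_j)=1\}$ onto $A_\mathbb P\{2\}\setminus\{0\}$ (neither coordinate sum can vanish, so $p,q\neq0$), so that
\[
\#\bigl(A_\mathbb P\{2\}\setminus\{0\}\bigr)=\tfrac12\,\#\Bigl\{(p,q)\in\mathbb P^{n\times1}\times\mathbb P^{m\times1}:\ \bigl(\textstyle\sum_{i}q_i\bigr)\bigl(\textstyle\sum_{j}p_j\bigr)=1\Bigr\}.
\]

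Next comes the combinatorics. Because the coordinate sums are integers, $(\sum_i q_i)(\sum_j p_j)=1$ holds exactly when $(\sum_j p_j,\sum_i q_i)\in\{(1,1),(-1,-1)\}$. Set $N_k:=\#\{v\in\mathbb P^{k\times1}:\sum_\ell v_\ell=1\}$. A vector $v\in\mathbb P^{k\times1}$ with $\sum_\ell v_\ell=1$ has, for some $s\ge0$, exactly $s$ coordinates equal to $-1$, exactly $s+1$ equal to $+1$, and the remaining $k-2s-1$ equal to $0$; this requires $2s+1\le k$, and the number of such $v$ is $\binom{k}{s}\binom{k-s}{s+1}$. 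Thus $N_k=\sum_{s=0}^{\lfloor(k-1)/2\rfloor}\binom{k}{s}\binom{k-s}{s+1}$, and the involution $v\mapsto-v$ gives $\#\{v\in\mathbb P^{k\times1}:\sum_\ell v_\ell=-1\}=N_k$ too. Therefore the set displayed above has $N_nN_m+N_nN_m=2N_mN_n$ elements, and halving it yields $\#(A_\mathbb P\{2\}\setminus\{0\})=N_mN_n$, which is precisely the product in the statement.

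The part I expect to be the main obstacle is the two-to-one claim in (iii) together with the factorization existence in (ii): one must be sure that rank-one Bohemian matrices over $\{0,\pm1\}$ genuinely factor through Bohemian vectors and that exactly two pairs yield each matrix. This is where the availability of $-1$ in $\mathbb P$ (in contrast to the $\{0,1\}$ case, where the map is one-to-one) enters, and it is exactly what produces the final factor $\tfrac12$. Once this structural step is secured, the remainder is routine binomial bookkeeping.
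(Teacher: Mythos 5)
Your proof is correct and follows essentially the same route as the paper: both reduce via Corollary \ref{cor5.2} to counting Bohemian vectors with coordinate sum $1$ (the same $\binom{k}{s}\binom{k-s}{s+1}$ decomposition), and both handle the sign ambiguity $X=pq^\T=(-p)(-q)^\T$ — the paper by restricting without loss of generality to the case where both sums equal $1$, you by counting both sign cases and dividing by two, which is the same bookkeeping. Your explicit verification that nonzero rank-one Bohemian matrices admit Bohemian factorizations and that the map $(p,q)\mapsto pq^\T$ is exactly two-to-one makes precise a step the paper leaves implicit.
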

\begin{proof}
For a matrix $X$, denote by $N^{+}(X)$ and $N^{-}(X)$ the number of entries of $X$ equal to 1 and equal to $-1$, respectively. 
     From Corollary \ref{cor5.2}, $X= pq^\T \in A\{2\}$ if and only if $\sum_{i=1}^{m}{q_i}\sum_{j=1}^{n}{p_j}=1$. 
Moreover $X \in A_{\mathbb{P}}\{2\}=A{\{2\}} \cap \mathbb{P}^{n \times m}$ implies $\sum_{i=1}^{m}{q_i}\sum_{j=1}^{n}{p_j}=1$, if either $\sum_{i=1}^{m}{q_i}=\sum_{j=1}^{n}{p_j}=1$ or $\sum_{i=1}^{m}{q_i}=\sum_{j=1}^{n}{p_j}=-1$. 
     Since $X=pq^\T=(-p)(-q)^\T$, both the cases are giving exactly the same matrices $X$. 
     Hence, without loss of generality, we consider the first case. 
     Let  $N^{+}(p)=s_1+1$ and  $N^{-}(p)=s_1$, where $s_1$ has choices $\{-n,-(n-1),\ldots,0,1,\ldots,n\}$ such that $s_1+s_1+1\leq n$, which gives the upper bound for $s_1$.
     
     Now, the number of ways to choose $s_1$ number of $-1$'s from $n$ entries of $p$ are ${n \choose s_1}$. 
Hence, the number of possible vectors $p$ satisfying $\sum_{j=1}^{n}{p_j}=1$ is equal to
$$S_1=\sum_{s_1=0}^{\lfloor{n-1}/2\rfloor} {n \choose s_1}{{n-s_1} \choose {1+s_1}}$$ 
and in a similar approach we get the number of possible $q$ vectors with $\sum_{i=1}^{m}{q_i}=1 $ are 
$$S_2=\sum_{s_1=0}^{\lfloor{m-1}/{2}\rfloor} {m \choose s_1}{{m-s_1} \choose {s_1+1}}.$$ 
     Hence the result.
\end{proof}

In the following results, representations of outer inverses and the cardinality of the set of outer Bohemian inverses for full matrices of type III are obtained.

\begin{thm} \label{thm 5.5}
  If $A=(1_{mn_1}\ |\ 0_{mn_2}) \in \mathbb{K}^{m \times n}$ is defined such that $n_1+n_2=n$, then
     $$A\{2\} \setminus \{0\} =\left\{X=pq^\T  \in  \mathbb{K}^{(n_1+n_2)\times m}:\ \sum_{i=1}^{m}{q_i} \sum_{j=1}^{n_1}{p_j}=1 \right\}.$$
\end{thm}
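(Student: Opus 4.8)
The plan is to reduce this claim to the already-established representation in Theorem \ref{main 1} by writing $A=(1_{mn_1}\ |\ 0_{mn_2})$ as a rank-one outer product. First I would observe that $A=\zeta\eta^\T$ where $\zeta=(1_{m1})$ and $\eta=(1_{1n_1}\ |\ 0_{1n_2})^\T\in\mathbb{K}^{n\times 1}$; this is immediate from the block structure, since the first $n_1$ columns are the all-ones vector and the last $n_2$ columns are zero. With this identification in hand, Theorem \ref{main 1} directly gives
\begin{equation*}
    A\{2\}\setminus\{0\}=\left\{X=pq^\T\in\mathbb{K}^{(n_1+n_2)\times m}:\ \ip{q}{\zeta}\,\ip{\eta}{p}=1\right\},
\end{equation*}
so the only remaining task is to expand the two inner products in coordinates.

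The second step is to compute $\ip{q}{\zeta}$ and $\ip{\eta}{p}$ explicitly. Writing $q=(q_i)\in\mathbb{K}^{m\times 1}$ and $p=(p_j)\in\mathbb{K}^{n\times 1}$, we get $\ip{q}{\zeta}=q^\T\zeta=\sum_{i=1}^m q_i$ because $\zeta$ is the all-ones vector, and $\ip{\eta}{p}=\eta^\T p=\sum_{j=1}^{n_1} p_j$ because $\eta$ has ones in its first $n_1$ slots and zeros elsewhere. Substituting these into the condition $\ip{q}{\zeta}\ip{\eta}{p}=1$ yields exactly $\sum_{i=1}^m q_i\sum_{j=1}^{n_1}p_j=1$, which is the stated characterization. (If the paper works over $\mathbb{C}$ with the conjugate-transpose convention, one notes that $\zeta$ and $\eta$ are real, so $\ip{q}{\zeta}=q^*\zeta=\sum \bar q_i$ and the same bookkeeping applies after a trivial adjustment; since this matches the pattern of Corollary \ref{cor5.2} I would simply present the real case and remark it carries over.)

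I do not anticipate a genuine obstacle here: the statement is a direct specialization of Theorem \ref{main 1}, entirely parallel to how Corollary \ref{cor5.2} was deduced for the type-I full matrix, the only new ingredient being that $\eta$ now has $n_2$ trailing zeros which truncates the second sum from $n$ to $n_1$. The only point requiring a line of care is confirming that the factorization $A=\zeta\eta^\T$ is correct on the zero block — i.e. that padding $\eta$ with zeros kills precisely the last $n_2$ columns — and that $X=pq^\T$ with $p\in\mathbb{K}^{n\times 1}$ genuinely ranges over all of $\mathbb{K}^{n\times m}$ of rank at most one, so no outer inverses are missed. Both are routine, so the proof will be two or three lines.
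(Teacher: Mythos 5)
Your proposal is correct and matches the paper's own proof, which likewise factors $A=\zeta\eta^\T$ with $\zeta=(1_{m1})$ and $\eta=\begin{pmatrix}1_{n_11}\\ 0_{n_21}\end{pmatrix}$ and then invokes Theorem \ref{main 1}. The coordinate expansion of the inner products is exactly the routine bookkeeping the paper leaves implicit.
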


\begin{proof}
Since $A=(1_{mn_1}\ |\ 0_{mn_2})=\zeta \eta^\T$, where $\zeta=(1_{m1})$, $\eta= \begin{pmatrix}   1_{n_11} \\      0_{n_21}  \end{pmatrix}$ such that $n_1+n_2=n$, the proof follows from Theorem \ref{main 1}.
\end{proof}
\begin{cor} Let dimensions of blocks involved in $A=(1_{mn_1} |\ 0_{mn_2}) \in \mathbb{K}^{m \times n}$ satisfy $n_1+n_2=n$. 
If $\mathbb{P}=\{0, \pm{1}\}$, then the cardinal of nonzero outer Bohemian inverses of $A$ is given by
\begin{equation*}
     \#(A_{\mathbb{P}}\{2\} \setminus \{0\})=3^{n_2} \left( \sum_{s=0}^{\lfloor{m-1}/{2}\rfloor} {m \choose s}{{m-s} \choose {s+1}} \right)\left( \sum_{s=0}^{\lfloor{n_1-1}/{2}\rfloor} {n_1 \choose s}{{n_1-s} \choose {s+1}} \right).
\end{equation*}
\end{cor}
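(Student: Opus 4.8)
The plan is to reduce the cardinality count to a product of three independent combinatorial counts by analysing the structure of a nonzero rank-one outer inverse $X = pq^\T$ of $A = (1_{mn_1}\ |\ 0_{mn_2})$, as characterized in Theorem \ref{thm 5.5}. Writing $p = \begin{pmatrix} p' \\ p'' \end{pmatrix}$ with $p' \in \mathbb{P}^{n_1 \times 1}$ and $p'' \in \mathbb{P}^{n_2 \times 1}$, and $q \in \mathbb{P}^{m \times 1}$, the defining equation from Theorem \ref{thm 5.5} is $\left(\sum_{i=1}^m q_i\right)\left(\sum_{j=1}^{n_1} p'_j\right) = 1$. The key observation is that the last $n_2$ coordinates $p''$ of $p$ are completely unconstrained by this equation, so they may be chosen freely from $\mathbb{P}^{n_2 \times 1}$; this contributes the factor $3^{n_2}$. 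The remaining factor must count the distinct matrices $pq^\T$ where $p', q$ satisfy the product-one constraint.

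First I would observe that since $\mathbb{P} = \{0, \pm 1\}$, the integer product $\left(\sum q_i\right)\left(\sum p'_j\right) = 1$ forces both factors to equal $1$ or both to equal $-1$. As in the proof of Corollary \ref{cor 5.4}, the substitution $(p, q) \mapsto (-p, -q)$ shows these two cases yield exactly the same set of matrices $X = pq^\T = (-p)(-q)^\T$, so without loss of generality I count only the case $\sum_{i=1}^m q_i = \sum_{j=1}^{n_1} p'_j = 1$. Here a subtlety arises that is not present in Corollary \ref{cor 5.4}: the sign-flip symmetry acts on the pair $(p,q)$ with $p'' $ flipped as well, so I should be careful to argue that within the fixed case $\sum q_i = \sum p'_j = 1$, the map $(p', p'', q) \mapsto pq^\T$ is injective (which holds because $p', q$ are both nonzero, so $p$ and $q$ are each recoverable from $pq^\T$ up to a common reciprocal scalar, and the normalization $\sum q_i = 1$ pins down the scalar — and then $p''$ is read off). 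This injectivity justifies multiplying the three counts.

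Next I would count the vectors $q \in \mathbb{P}^{m \times 1}$ with $\sum_{i=1}^m q_i = 1$: choosing $s$ entries equal to $-1$ and $s+1$ entries equal to $+1$ (the rest zero) requires $2s+1 \le m$, and the number of such $q$ is $\sum_{s=0}^{\lfloor (m-1)/2 \rfloor} \binom{m}{s}\binom{m-s}{s+1}$, exactly as computed in Corollary \ref{cor 5.4} (which I may cite). The identical argument applied to $p' \in \mathbb{P}^{n_1 \times 1}$ with $\sum_{j=1}^{n_1} p'_j = 1$ gives $\sum_{s=0}^{\lfloor (n_1-1)/2 \rfloor} \binom{n_1}{s}\binom{n_1-s}{s+1}$. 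Multiplying the three factors — the free factor $3^{n_2}$ from $p''$ and the two constrained counts — yields the stated formula.

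**The main obstacle** is the injectivity/overcounting bookkeeping, i.e., verifying cleanly that distinct triples $(p', p'', q)$ in the normalized case $\sum q_i = \sum p'_j = 1$ produce distinct matrices $X$, and that passing to this normalized case loses no outer inverses and introduces no double-counting. The combinatorial enumeration of sign vectors with a prescribed coordinate sum is routine once Corollary \ref{cor 5.4} is invoked, and the appearance of the $3^{n_2}$ factor is immediate once one notices $p''$ is unconstrained; so the only place requiring genuine care is confirming that the three choices are truly independent and that the rank-one representation $X = pq^\T$ does not secretly identify parameter values (here the fact that a nonzero rank-one matrix determines its outer product factors up to a single nonzero scalar, killed by the normalization, is the crux).
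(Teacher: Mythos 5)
Your proposal is correct and follows essentially the same route as the paper, which proves the corollary by invoking Theorem \ref{thm 5.5} and repeating the counting argument of Corollary \ref{cor 5.4} (sign-normalization via $pq^\T=(-p)(-q)^\T$, enumeration of $\mathbb{P}$-vectors with coordinate sum $1$, and the free factor $3^{n_2}$ from the unconstrained block of $p$). Your explicit injectivity check for the normalized parametrization is a welcome extra detail but not a departure from the paper's argument.
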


\begin{proof}
    The proof follows by Theorem \ref{thm 5.5} and is similar to the proof of Corollary \ref{cor 5.4}.
\end{proof}

All the rank-one Bohemian matrices over the population $\mathbb{P}=\{0, \pm 1\}$ possess a non-trivial outer Bohemian inverse within the same population. 
The zero matrix is regarded as the trivial outer inverse.

\begin{cor}
    For Bohemian matrices $A=(1_{mn})$ and $B=(1_{mn_1}\ |\, -1_{mn_2})$ over $\mathbb{P}=\{0,\pm 1 \}$ such that $n_1+n_2=n$, it follows that $\#(A_\mathbb{P}\{2\})=\#(B_\mathbb{P}\{2\})$.
\end{cor}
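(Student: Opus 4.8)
The plan is to produce an explicit population-preserving bijection between $A\{2\}$ and $B\{2\}$, in the same spirit as the argument for $\{1\}$-inverses in Corollary~\ref{cor 3.7}. First I would use the factorization recorded in equation~\eqref{eq thm 3.2}: setting $V := \mathrm{diag}(I_{n_1}, -I_{n_2}) \in \mathbb{P}^{n \times n}$, one has $B = (1_{mn_1}\ |\ -1_{mn_2}) = (1_{mn})\,V = A V$, where $V$ is unitary — indeed an involutory Bohemian diagonal matrix, so that $V^* = V^{-1} = V$. Writing $B = I_m\,A\,V$ and applying Lemma~\ref{1.1}(2) then gives $B\{2\} = (I_m A V)\{2\} = V^*\bigl(A\{2\}\bigr) = V\bigl(A\{2\}\bigr)$, using the set-multiplication notation $U\mathcal{S} = \{US : S \in \mathcal{S}\}$ from the preliminaries.

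Next I would argue that the linear map $\Phi\colon X \mapsto V X$ is a bijection from $A\{2\}$ onto $B\{2\}$; it is its own inverse since $V^2 = I_n$, and it is surjective onto $B\{2\}$ by the identity just displayed. Because $V$ is diagonal with entries in $\{\pm 1\}$, the map $\Phi$ simply negates the last $n_2$ rows of its argument, and hence maps $\mathbb{P}^{n \times m}$ bijectively onto itself. Consequently $\Phi$ restricts to a bijection $A\{2\} \cap \mathbb{P}^{n \times m} \to B\{2\} \cap \mathbb{P}^{n \times m}$, that is, $A_{\mathbb{P}}\{2\} \to B_{\mathbb{P}}\{2\}$. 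Both sets are finite, since their entries are confined to the finite set $\mathbb{P}$, so the existence of $\Phi$ forces $\#(A_{\mathbb{P}}\{2\}) = \#(B_{\mathbb{P}}\{2\})$; equivalently, this is exactly Corollary~\ref{cardinaliy}(1) specialized to $U = I_m$ and the Bohemian diagonal unitary $V$.

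I do not expect a genuine obstacle: the one point worth stating explicitly is that left multiplication by the unitary matrix $V$ preserves the population $\mathbb{P} = \{0, \pm 1\}$ — a property that fails for a generic unitary matrix but holds here precisely because $V$ is a signed diagonal matrix — and that the bijection correctly handles the trivial outer inverse, since $\Phi(0) = 0$ places the zero matrix of $A\{2\}$ in correspondence with the zero matrix of $B\{2\}$. A shorter variant of the write-up is simply to cite Corollary~\ref{cardinaliy} together with the factorization $B = AV$ and let the reader supply the one-line verification above.
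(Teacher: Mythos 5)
Your proposal is correct and follows essentially the same route as the paper: the paper's proof simply cites Lemma \ref{1.1}, Corollary \ref{cardinaliy} and the factorization \eqref{eq thm 3.2} (i.e.\ $B=AV$ with $V=\mathrm{diag}(I_{n_1},-I_{n_2})$), which is exactly the argument you spell out in detail via the population-preserving bijection $X\mapsto VX$.
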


\begin{proof}
    The proof follows from Lemma \ref{1.1}, Corollary \ref{cardinaliy} and the equation \eqref{eq thm 3.2}.
\end{proof}

\begin{rem}
Similar to the case of inner inverses, the sets of Bohemian outer inverses for any two rank-one Bohemian matrices $A,B\in \mathbb{P}^{m \times n}$ over $\mathbb{P}=\{0,\pm 1 \}$ having the same number of zero rows and zero columns, are of the same cardinality: $\#(A_\mathbb{P}\{2\})=\#(B_\mathbb{P}\{2\})$.
\end{rem}

\begin{rem}
    The rank-one outer inverses of a rank-one matrix $A$ are $\{1,2\}$-inverses of $A$, also known as reflexive inner inverses.
\end{rem}

\subsection{Generalized well-settled matrices and extended class of matrices}

In Remark \ref{remark for rankone}, we noted that it is sufficient to find outer inverses of the full matrices of type I and type III in order to determine outer inverses or outer Bohemian inverses of the rank-one Bohemian matrices. 
Additionally, inner inverses of generalized well-settled matrices are discussed in Section \ref{sec 3.2}. 
Similar to these cases, let $A=\begin{pmatrix} 
        A_1 & \hdots & 0\\        \vdots & \ddots & \vdots\\     0 & \hdots& A_r \end{pmatrix} \in \mathbb{K}^{m \times n}$, 
be the matrix defined by blocks where $A_i=(1_{m_in_i})$ or $A_i=(1_{m_in_{i1}}\ |\ 0_{m_in_{i2}})$ such that $n_{i1}+n_{i2}=n_i$. 
Then, up to multiplication by permutation matrices, $A$ can be transformed into the form, $A=(B \ | \ 0)$ where $B$ is a $(1_{mn})$-pure well-settled matrix.
        
Thus, outer inverses of any well-settled matrix can be generated using outer inverses of the following matrices:
\begin{enumerate}
    \item[1)] $(\pm1_{mn})$-pure well-settled matrices,
    \item[2)] $(A \ | \ 0)$ where $A$ is a $(\pm1_{mn})$-pure well-settled matrix.
\end{enumerate}

\subsubsection{Rank-one outer Bohemian inverse}

Let $X=pq^{\T} \in \mathbb{K}^{n \times m}$ denote the rank-one outer inverse throughout this subsection, where $p=(p_i)  \in \mathbb{K}^{n \times 1}$ and $q=(q_i)  \in \mathbb{K}^{m \times 1}$ are vectors of appropriate order. 
Throughout this subsection, the partitioned form of any vector $p$ into $r$ blocks is denoted as 
$p=\begin{pmatrix}        p^1\\ \vdots \\ p^r    \end{pmatrix}$.
Theorem \ref{thm 5.10} determines rank-one outer inverses of block diagonal matrices.   

\begin{thm} \label{thm 5.10}
    Let $A=\begin{pmatrix}
        A_1 & \hdots & 0\\
        \vdots & \ddots & \vdots\\
        0 & \hdots & A_r \end{pmatrix}  \in \mathbb{K}^{m \times n}$, $A_i \in \mathbb{K}^{m_i \times n_i}$ for all $i \in \{1,\ldots ,r\}$. 
Then
\begin{equation*}
    A\{2\}_1=\left\{X=pq^\T \in \mathbb{K}^{n \times m} :\ \ \sum_{i=1}^{r} {{q^i}^\T A_i\, p^i}=1 \right\},
\end{equation*}
where $q=\begin{pmatrix}    q^1\\    \vdots\\    q^r\end{pmatrix} \in \mathbb{K}^{m \times 1}$, $p=\begin{pmatrix}    p^1\\    \vdots\\    p^r\end{pmatrix} \in \mathbb{K}^{n \times 1}$, 
$p_i \in \mathbb{K}^{n_i \times 1}$ and $q_i \in \mathbb{K}^{m_i \times 1}$.
\end{thm}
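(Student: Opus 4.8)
The plan is to use that a matrix of rank exactly one is precisely a nonzero outer product; this collapses the quadratic equation $XAX=X$ into a single scalar identity, after which the block-diagonal structure of $A$ makes that scalar explicit.

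First I would observe that $X\in A\{2\}_1$ means both $XAX=X$ and $\ra{X}=1$, so $X$ can be written as $X=pq^\T$ with nonzero vectors $p\in\mathbb{K}^{n\times 1}$ and $q\in\mathbb{K}^{m\times 1}$; conversely, every nonzero product $pq^\T$ has rank one. It therefore suffices to decide for which pairs $(p,q)$ one has $XAX=X$. The factorization $X=pq^\T$ is not unique, but since the set in the statement is described through the factors $p$ and $q$ themselves, this causes no difficulty.

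Next I would perform the direct computation
$$XAX=(pq^\T)\,A\,(pq^\T)=p\,(q^\T A p)\,q^\T=(q^\T A p)\,pq^\T=(q^\T A p)\,X,$$
which is valid because $q^\T A p$ is a scalar. Since $X\neq 0$, the equation $XAX=X$ holds if and only if $q^\T A p=1$. This scalar condition already forces $p\neq 0$ and $q\neq 0$, hence $\ra{X}=1$ automatically, so no generality is lost by working with $A\{2\}_1$ rather than $A\{2\}$.

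Finally I would substitute $A=\mathrm{diag}(A_1,\ldots,A_r)$ and partition $p$ and $q$ compatibly as in the statement, with $p^i\in\mathbb{K}^{n_i\times 1}$ and $q^i\in\mathbb{K}^{m_i\times 1}$. Block-diagonality yields $q^\T A p=\sum_{i=1}^{r}{q^i}^\T A_i\,p^i$, so the condition $q^\T A p=1$ becomes $\sum_{i=1}^{r}{q^i}^\T A_i\,p^i=1$, which is exactly the claimed description. The argument is essentially a one-line calculation; the only delicate points are the (harmless) non-uniqueness of the rank-one factorization and the remark that the scalar condition by itself fixes $\ra{X}=1$.
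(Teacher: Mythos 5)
Your proposal is correct and follows essentially the same route as the paper's proof: write the rank-one candidate as $X=pq^\T$, compute $XAX=(q^\T A p)\,X$, and use the block-diagonal form of $A$ to expand the scalar as $\sum_{i=1}^{r}{q^i}^\T A_i\,p^i$. Your added remarks on the non-uniqueness of the factorization and on the scalar condition forcing $\ra{X}=1$ are harmless refinements the paper leaves implicit.
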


\begin{proof}
The matrix $X$ can be partitioned as follows:
    $$X=pq^\T=p({q^1}^\T \ | \ \cdots \ | \  {q^r}^\T)=\begin{pmatrix}     p^1\\ \vdots \\ p^r    \end{pmatrix}q^\T.$$ 
As a result, it can be obtained 
    \begin{equation*}
        \begin{split}
            XAX &=p({q^1}^\T \ | \ \cdots \ | \  {q^r}^\T)\begin{pmatrix}        A_1 & \hdots & 0\\        \vdots & \ddots & \vdots\\        0 & \hdots& A_r \end{pmatrix}
            \begin{pmatrix}        p_1\\ \vdots \\ p_r    \end{pmatrix}q^\T \\    
        &= \sum_{i=1}^{r} {{q^i}^\T A_i\, p^i}pq^\T.
        \end{split}
    \end{equation*}
    Therefore, $XAX=X$ if and only if $\sum_{i=1}^{r} {{q^i}^\T A_ip^i}=1$.
\end{proof}

Rank-one outer inverses of the well-settled matrices can be obtained by considering the blocks $A_i$ in Theorem \ref{thm 5.10} as full matrices. 
The resulting outcomes provide rank-one outer inverses for pure well-settled matrices.

\begin{cor} \label{rk 1 WS}
    Let $A=\begin{pmatrix}
        A_1 & \hdots & 0\\
        \vdots & \ddots & \vdots\\
        0 & \hdots& A_r \end{pmatrix} \in \mathbb{K}^{m \times n}$, such that $A_i =(1_{m_in_i})$ for all $i=1,\ldots ,r$. 
Then
\begin{equation*}
    A\{2\}_1=\left\{X=pq^\T \in \mathbb{K}^{n \times m} :\ \sum_{i=1}^{r} {\Xi(q^i)\, \Xi(p^i)}=1 \right\},
\end{equation*}
where $q=\begin{pmatrix}
    q^1\\
    \vdots\\
    q^r
\end{pmatrix} \in \mathbb{K}^{m \times 1}$, $p=\begin{pmatrix}
    p^1\\
    \vdots\\
    p^r
\end{pmatrix} \in \mathbb{K}^{n \times 1}$, $p_i \in \mathbb{K}^{n_i \times 1}$ and $q_i \in \mathbb{K}^{m_i \times 1}$.
\end{cor}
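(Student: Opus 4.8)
The plan is to specialize Theorem \ref{thm 5.10} to the case where every diagonal block $A_i$ is the all-ones matrix $(1_{m_in_i})$, and simply evaluate the quadratic form ${q^i}^\T A_i\, p^i$ for this choice of block. First I would recall that $(1_{m_in_i}) = (1_{m_i1})(1_{n_i1})^\T$, so that for any $q^i \in \mathbb{K}^{m_i \times 1}$ and $p^i \in \mathbb{K}^{n_i \times 1}$ we have
\begin{equation*}
{q^i}^\T (1_{m_in_i})\, p^i = {q^i}^\T (1_{m_i1})(1_{n_i1})^\T p^i = \big({q^i}^\T 1_{m_i1}\big)\big(1_{n_i1}^\T p^i\big) = \Xi(q^i)\,\Xi(p^i),
\end{equation*}
since ${q^i}^\T 1_{m_i1}$ is precisely the sum of the entries of $q^i$, i.e.\ $\Xi(q^i)$, and likewise $1_{n_i1}^\T p^i = \Xi(p^i)$.

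Substituting this identity into the condition $\sum_{i=1}^{r} {q^i}^\T A_i\, p^i = 1$ from Theorem \ref{thm 5.10} immediately yields $\sum_{i=1}^{r} \Xi(q^i)\,\Xi(p^i) = 1$, which is exactly the claimed description of $A\{2\}_1$. The partitioning conventions for $p$ and $q$ carry over verbatim from Theorem \ref{thm 5.10}, so no further bookkeeping is needed.

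There is essentially no obstacle here: the corollary is a direct instantiation of the preceding theorem, and the only content is recognizing that the bilinear form attached to a rank-one all-ones block factors as a product of the two entry-sums. One minor point worth stating explicitly in the write-up is that $\Xi(\cdot)$ applied to a column vector coincides with multiplication on the left by the all-ones row vector of matching length, which legitimizes the rewriting; beyond that the proof is a one-line substitution. I would keep the proof to two or three sentences, citing Theorem \ref{thm 5.10} and Lemma \ref{multiply full matrix}(1) (or just the factorization $(1_{m_in_i}) = (1_{m_i1})(1_{n_i1})^\T$) as the justification.
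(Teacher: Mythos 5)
Your proposal is correct and follows essentially the same route as the paper: the paper's proof also invokes the condition $\sum_{i=1}^{r}{q^i}^\T A_i p^i=1$ from Theorem \ref{thm 5.10} and then factors each block as $A_i=u_iv_i^\T$ with $u_i=(1_{m_i1})$, $v_i=(1_{n_i1})$, so that ${q^i}^\T A_i p^i=\Xi(q^i)\,\Xi(p^i)$. No gap; your write-up just makes the rank-one factorization of the all-ones block slightly more explicit.
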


\begin{proof}
    Let $X=pq^\T$. 
    Then $XAX=X$ gives $\sum_{i=1}^{r} {{q^i}^\T A_ip^i}=1$. 
Since $A_i=u_iv_i^\T$ where $u_i=(1_{m_i1})$ and $v_i=(1_{n_i1})$, it further implies $\sum_{i=1}^{r} {{q^i}^\T u_i{v_i}^\T p^i}=1$, which concludes the proof.
\end{proof}

\begin{cor} \label{rk 1 WS2}
    Let $A=(B_{mn_{1}} | \ 0_{mn_{2}}) \in \mathbb{K}^{m \times n}$ such that 
$n_1+n_2=n$ and $B=\begin{pmatrix}
        B_1 & \hdots & 0\\
        \vdots & \ddots & \vdots\\
        0 & \hdots& B_r \end{pmatrix}$, such that $B_i =(1_{m_in_{i1}})$ for all $i \in \{1,\ldots ,r\}$. Then
\begin{equation*}
    A\{2\}_1=\left\{X=pq^\T \in \mathbb{K}^{n \times m} : \sum_{i=1}^{r} {\Xi(q^i).\Xi(\zeta ^i)}=1 \right\},
\end{equation*}
where $q=\begin{pmatrix}
    q^1\\
    \vdots\\
    q^r
\end{pmatrix} \in \mathbb{K}^{m \times 1}$, $p=\begin{pmatrix}
    \zeta \\ \eta
\end{pmatrix} \in \mathbb{K}^{(n_1+n_2) \times 1}, \zeta =\begin{pmatrix}
    \zeta^1\\
    \vdots\\
    \zeta^r
\end{pmatrix} \in \mathbb{K}^{n_1 \times 1}$, $q^i \in \mathbb{K}^{m_i \times 1}$, $\zeta^i \in \mathbb{K}^{n_{i1} \times 1}$.

\end{cor}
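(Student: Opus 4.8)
The plan is to combine the zero-column reduction of Lemma~\ref{1.2} (with prescribed rank $s=1$) with the block-diagonal evaluation of Corollary~\ref{rk 1 WS}, or, equivalently, to run a direct computation on the rank-one ansatz $X=pq^{\T}$. First I would partition $p$ conformally with the two column blocks of $A=(B\ |\ 0)$, writing $p=\begin{pmatrix}\zeta\\ \eta\end{pmatrix}$ with $\zeta\in\mathbb{K}^{n_1\times1}$ and $\eta\in\mathbb{K}^{n_2\times1}$, so that $X=\begin{pmatrix}\zeta q^{\T}\\ \eta q^{\T}\end{pmatrix}$. The key observation is that the trailing zero columns make $AX$ depend on $\zeta$ alone: $AX=(B\ |\ 0)X=B\zeta\, q^{\T}$.

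Next I would compute $XAX$ directly. Since $q^{\T}B\zeta$ is a scalar, $XAX=pq^{\T}(B\zeta)q^{\T}=(q^{\T}B\zeta)\,pq^{\T}=(q^{\T}B\zeta)\,X$. Because $\mathrm{rank}(AX)\le1$ forces every element of $A\{2\}_1$ to be a nonzero rank-one matrix $pq^{\T}$, and conversely any such $X$ satisfying $XAX=X$ automatically has rank one, the outer-inverse equation $XAX=X$ is equivalent to the single scalar identity $q^{\T}B\zeta=1$; this reduces the claim to evaluating $q^{\T}B\zeta$ in terms of the block data.

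Then I would unfold the block structure of $B=\mathrm{diag}(B_1,\dots,B_r)$. Partitioning $q=\begin{pmatrix}q^{1}\\ \vdots\\ q^{r}\end{pmatrix}$ and $\zeta=\begin{pmatrix}\zeta^{1}\\ \vdots\\ \zeta^{r}\end{pmatrix}$ conformally with the blocks $B_i=(1_{m_i n_{i1}})$ gives $q^{\T}B\zeta=\sum_{i=1}^{r}{q^{i}}^{\T}B_i\,\zeta^{i}$, and using $B_i=(1_{m_i1})(1_{n_{i1}1})^{\T}$ yields ${q^{i}}^{\T}B_i\,\zeta^{i}=\bigl({q^{i}}^{\T}1_{m_i1}\bigr)\bigl(1_{n_{i1}1}^{\T}\zeta^{i}\bigr)=\Xi(q^{i})\,\Xi(\zeta^{i})$. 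Substituting back, $XAX=X$ is equivalent to $\sum_{i=1}^{r}\Xi(q^{i})\,\Xi(\zeta^{i})=1$, which is the asserted description of $A\{2\}_1$.

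I do not anticipate a genuine obstacle; the argument is a short chain of matrix identities, and the only delicate point is bookkeeping the two nested partitions — the outer $(n_1\mid n_2)$ split of the columns of $A$ and the inner $r$-fold split of the first part. It is worth recording why $\eta$ remains otherwise free: in the language of Lemma~\ref{1.2}, with $X_1=\zeta q^{\T}$ and $X_2=\eta q^{\T}$ one has $X_2 B X_1=(q^{\T}B\zeta)X_2$, so as soon as $q^{\T}B\zeta=1$ (equivalently $X_1\in B\{2\}_1$, by Corollary~\ref{rk 1 WS}) the auxiliary condition $X_2 B X_1=X_2$ of Lemma~\ref{1.2} is satisfied automatically, in agreement with the stated formula.
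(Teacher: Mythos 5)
Your proposal is correct, and the underlying computation is the same scalar identity the paper relies on, namely $q^{\T}B\zeta=\sum_{i=1}^{r}{q^{i}}^{\T}B_i\zeta^{i}=\sum_{i=1}^{r}\Xi(q^{i})\,\Xi(\zeta^{i})=1$. The difference is one of packaging: the paper's proof routes through Lemma~\ref{1.2} (to split $A\{2\}_1$ into the condition $X_1=\zeta q^{\T}\in B\{2\}_1$ plus the auxiliary equation $X_2BX_1=X_2$) and then invokes Corollary~\ref{rk 1 WS} for the block-diagonal part, whereas you verify $XAX=(q^{\T}B\zeta)\,X$ directly on the rank-one ansatz, which makes the corollary self-contained and dispenses with both cited results. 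Your closing remark is also slightly sharper than the paper's wording: the paper asserts $X_2BX_1=X_2\Longleftrightarrow q^{\T}B\zeta=1$, which as a bare equivalence fails when $\eta=0$ (the left side then holds vacuously); your observation that $X_2BX_1=(q^{\T}B\zeta)X_2$, so that the auxiliary condition is automatic once $X_1\in B\{2\}_1$ forces $q^{\T}B\zeta=1$, is the correct way to see why $\eta$ is unconstrained. The only bookkeeping point worth keeping explicit in a final write-up is the one you already flag: an element of $A\{2\}_1$ is by definition a nonzero rank-one matrix, hence of the form $pq^{\T}$ with $p,q\neq0$, and conversely the scalar equation forces $q\neq0$ and $\zeta\neq0$, so such an $X$ indeed has rank exactly one.
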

\begin{proof}
    Let $X=pq^\T$, where the vector $p$ is partitioned as $\begin{pmatrix}        \zeta \\ \eta    \end{pmatrix} \in \mathbb{K}^{(n_1+n_2) \times 1}$. 
    Then $X$ is partitioned in the form
     $X=\begin{pmatrix}        \zeta q^\T \\ \eta q^\T    \end{pmatrix}=\begin{pmatrix}        X_1 \\ X_2    \end{pmatrix} \in \mathbb{K}^{(n_1+n_2) \times m}$. 
Results of Corollary \ref{rk 1 WS} yield the following
    \begin{equation*}
        B\{2\}_1=\left\{X_1=\zeta q^\T \in \mathbb{K}^{n_1 \times m} :\ \sum_{i=1}^{r} {\Xi(q^i)\cdot \Xi(\zeta ^i)}=1 \right\}.
    \end{equation*}
    Also, Lemma \ref{1.2} yields
\begin{equation*}
    A\{2\}_1=\left\{X=\begin{pmatrix}            X_1 \\            X_2         \end{pmatrix} \in \mathbb{K}^{n \times m}  :\ X_1 \in B\{2\}_1, \  X_2BX_1=X_2\right\}.
\end{equation*}
Moreover, $X_2BX_1=X_2\Longleftrightarrow q^\T Bs=1\Longleftrightarrow \sum_{i=1}^r {{q^i}^\T B_i \zeta ^i}=1$, which concludes the proof.
\end{proof}

Rank-one outer inverses of the extended class of matrices are studied in the subsequent theorems. 
Theorem \ref{rank1 OI extended} gives rank-one outer inverses for a row-wise partitioned matrix.

\begin{thm} \label{rank1 OI extended}
    Let $A=\begin{pmatrix}
        A_1 \\        \vdots \\        A_r \end{pmatrix} \in \mathbb{K}^{m \times n}$, and $A_i \in \mathbb{K}^{m_i \times n}$ for all $i \in \{1,\ldots ,r\}$. 
Then
\begin{equation*}
    A\{2\}_1=\left\{X=pq^T \in \mathbb{K}^{n \times m}:\ \sum_{i=1}^{r} {{q^i}^\T A_i\, p}=1 \right\},
\end{equation*}
where $q=\begin{pmatrix}
    q^1\\   \vdots\\   q^r\end{pmatrix} \in \mathbb{K}^{m \times 1}$ and $q^i \in \mathbb{K}^{m_i \times 1}$.
\end{thm}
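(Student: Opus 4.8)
The plan is to imitate, almost verbatim, the argument of Theorem \ref{thm 5.10}, replacing the block-diagonal structure by the row partition of $A$. First I would take an arbitrary rank-one matrix $X \in \mathbb{K}^{n \times m}$ and write it as an outer product $X = pq^\T$ with nonzero vectors $p \in \mathbb{K}^{n \times 1}$ and $q \in \mathbb{K}^{m \times 1}$, partitioning $q$ conformally with the row blocks of $A$, so that $q^\T = \big((q^1)^\T \mid \cdots \mid (q^r)^\T\big)$ with $q^i \in \mathbb{K}^{m_i \times 1}$. Note that $p$ is not partitioned, since $A$ is partitioned only row-wise and every block $A_i$ has $n$ columns.

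The key computation is then that
$$
q^\T A = \big((q^1)^\T \mid \cdots \mid (q^r)^\T\big)\begin{pmatrix} A_1 \\ \vdots \\ A_r \end{pmatrix} = \sum_{i=1}^{r} (q^i)^\T A_i,
$$
which is a $1 \times n$ row vector, so $q^\T A p = \sum_{i=1}^{r} (q^i)^\T A_i\, p$ is a scalar; call it $c$. Substituting gives
$$
XAX = p\,(q^\T A p)\,q^\T = c\, p q^\T = c\, X.
$$
Since $X = pq^\T \neq 0$, the outer-inverse equation $XAX = X$ holds if and only if $c = 1$, i.e. $\sum_{i=1}^{r} (q^i)^\T A_i\, p = 1$. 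Conversely, if that scalar equals $1$ then necessarily $p \neq 0$ and $q \neq 0$, so $\ra{X} = 1$ and hence $X \in A\{2\}_1$. Combining the two directions yields the claimed set equality.

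This step sequence is entirely routine; there is no real obstacle. The only points that require a little care are to keep the partition of $q$ (and the absence of a partition of $p$) consistent through the matrix product, and to observe that the scalar condition $\sum (q^i)^\T A_i p = 1$ automatically rules out the trivial solution $X = 0$, so no separate bookkeeping of the prescribed rank is needed beyond the remark that a nonzero outer product has rank exactly one.
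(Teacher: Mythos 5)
Your proposal is correct and follows exactly the route the paper intends: the paper's own proof consists of the single remark that the argument is the same as for Theorem \ref{thm 5.10}, and your computation $XAX = (q^\T A p)\,pq^\T$ with the conformal partition of $q$ is precisely that argument adapted to the row partition. The added observation that the scalar condition forces $p\neq 0$, $q\neq 0$ (so the rank-one requirement is automatic) is a small but correct tidying of the set equality.
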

\begin{proof}
The proof is similar to that of Theorem \ref{thm 5.10}.

\end{proof}

Next, we represent rank-one outer inverses for full-row rank matrices. 
When these matrices are explicitly classified as Bohemian, they are categorized under Class II.
Additionally, if we restrict the set of outer inverses to be Bohemian over the same population, we obtain the outer Bohemian inverses.

\begin{thm} \label{rank1 OI extended full rank}
    Let $A=\begin{pmatrix}
        A_1 \\        \vdots \\        A_m \end{pmatrix} \in \mathbb{K}^{m \times n}_m$, and $A_i \in \mathbb{K}^{1 \times n}$  for all $i \in \{1,\ldots ,m\}$. 
    Then
\begin{equation*}
    A\{2\}_1=\left\{X=(X_1\ |\ \lambda_1X_1\ |\  \cdots \ |\  \lambda_{m-1}X_1) \in \mathbb{K}^{n \times m} :\ \sum_{i=1}^{m} {\lambda_{i-1}A_iX_1}=1, \lambda_0=1,\ \lambda_i \in \mathbb{K} \ \forall i\right\},
\end{equation*}
where $X_1 \in \mathbb{K}^{n \times 1}$.
\end{thm}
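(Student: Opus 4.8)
The plan is to read this off from Theorem~\ref{rank1 OI extended} after specializing to single-row blocks, and then to rewrite a rank-one outer inverse in terms of its first column. First I would note that, since each $A_i$ is a $1\times n$ row, the block $q^i$ appearing in Theorem~\ref{rank1 OI extended} is just a scalar $q_i$, so that characterization reads
\[
A\{2\}_1 = \bigl\{\, X = pq^\T \in \mathbb{K}^{n\times m} : \ \textstyle\sum_{i=1}^{m} q_i A_i p = 1 \,\bigr\},
\]
with $p\in\mathbb{K}^{n\times 1}$ and $q=(q_1,\dots,q_m)^\T$; in particular the defining equation forces $p\neq 0$ and $q\neq 0$, so the zero matrix is automatically excluded. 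It then remains only to pass between the pair $(p,q)$ and the data $(X_1,\lambda_1,\dots,\lambda_{m-1})$ used in the statement.

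For this passage, the key observation is that the $j$-th column of $X=pq^\T$ equals $q_j p$, so all columns of $X$ are proportional. Assuming the first column is nonzero (more on this below), I would set $X_1:=q_1 p$ and $\lambda_{i-1}:=q_i/q_1$ for $i=2,\dots,m$, with $\lambda_0:=1$; then $X=(X_1\mid\lambda_1 X_1\mid\cdots\mid\lambda_{m-1}X_1)$, and plugging $p=X_1/q_1$, $q_i=\lambda_{i-1}q_1$ into $\sum_i q_i A_i p=1$ collapses to $\sum_{i=1}^{m}\lambda_{i-1}A_i X_1=1$. For the reverse inclusion I would take $X=(X_1\mid\lambda_1 X_1\mid\cdots\mid\lambda_{m-1}X_1)$ with $\lambda_0=1$ satisfying that equation, write $\lambda:=(1,\lambda_1,\dots,\lambda_{m-1})^\T$ so that $X=X_1\lambda^\T=pq^\T$ with $p=X_1$, $q=\lambda$, and observe the hypothesis becomes $\lambda^\T A X_1=1\neq 0$, whence $X_1\neq 0$, $\ra{X}=1$, and $X\in A\{2\}_1$ by the displayed set. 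The full-row-rank hypothesis enters here: it ensures $\lambda^\T A\neq 0$ for every admissible $\lambda$ (which has $\lambda_0=1$), so the equation $(\lambda^\T A)X_1=1$ for $X_1$ is always consistent and no choice of $(\lambda_1,\dots,\lambda_{m-1})$ produces an empty fibre.

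The one step needing care — and the only real obstacle — is the clause ``assuming the first column is nonzero'': a rank-one outer inverse of $A$ can have its first column equal to zero (e.g.\ $\mathrm{diag}(0,1)\in I_2\{2\}_1$), and such an $X$ is not literally of the displayed normalized form. Since a rank-one matrix always has \emph{some} nonzero column, I would handle this by permuting the columns of $X$ to bring a nonzero one to the front — equivalently permuting the rows of $A$, which by Lemma~\ref{1.1}(2) only relabels $A\{2\}_1$ — so the identity should be understood up to such a permutation equivalence, after which the reparametrization above applies verbatim and everything else is routine substitution.
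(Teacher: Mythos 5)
Your argument is correct and arrives at the same defining equation $\sum_{i=1}^{m}\lambda_{i-1}A_iX_1=1$, but by a different route than the paper: you specialize the $pq^\T$ characterization of Theorem \ref{rank1 OI extended} to scalar blocks $q^i$ and then reparametrize $(p,q)\mapsto(X_1,\lambda_1,\dots,\lambda_{m-1})$, whereas the paper proves the statement directly, parametrizing a rank-one $X$ by its first column as $X=(X_1\,|\,\lambda_1X_1\,|\,\cdots\,|\,\lambda_{m-1}X_1)$ and computing $XAX$. The computations are essentially the same; what your route buys is that exhaustiveness of the parametrization is inherited from the $pq^\T$ form, which does represent every rank-one matrix, so the only possible loss is isolated in the change of variables --- exactly where you flag it. (A small aside: the full-row-rank hypothesis plays no role in the set equality itself; your use of it concerns only non-emptiness of the fibres over $(\lambda_1,\dots,\lambda_{m-1})$, and the paper's proof never invokes it either.)

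Your caveat about the zero first column is not a defect of your proof but a genuine imprecision in the paper's: its proof opens with the assertion that \emph{any} rank-one $X\in\mathbb{K}^{n\times m}$ can be written in the displayed normalized form, which is false whenever the first column of $X$ vanishes, and such rank-one outer inverses exist for every $A$ in the theorem's scope with $m\ge 2$ (take $X=ze_2^\T$ with $A_2z=1$; your example $\mathrm{diag}(0,1)\in I_2\{2\}_1$ is the simplest instance). Hence the displayed equality literally gives only the inclusion $\supseteq$ together with those members of $A\{2\}_1$ whose first column is nonzero. Your repair --- permuting the columns of $X$ (equivalently the rows of $A$), justified by Lemma \ref{1.1}(2), or equivalently taking the union of the displayed sets over the choice of distinguished column --- is the right one, and it matters downstream as well, since the same first-column normalization is reused in Corollary \ref{cor 51} and in the enumeration of $A_{\mathbb{P}}\{2\}_1$ in the cardinality theorem (where, e.g., matrices of the form $(0\,|\,X_2)$ with $\Xi(X_{22})=1$ are outer Bohemian inverses not captured by the form $(X_1\,|\,\lambda X_1)$).
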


\begin{proof}
    Any rank-one matrix $X \in \mathbb{K}^{n \times m}$ can be represented as $X=(X_1\ |\ \lambda_1X_1\ |\  \cdots \ |\  \lambda_{m-1}X_1)$, where $X_1 \in \mathbb{K}^{n \times 1}$ and $\lambda_i \in \mathbb{K}$. 
It implies
        \begin{equation*}
        \begin{split}
            XAX &= (X_1\ |\ \lambda_1X_1\ |\  \cdots \ |\  \lambda_{m-1}X_1) \begin{pmatrix}         A_1 \\     \vdots \\      A_m \end{pmatrix} 
            (X_1\ |\ \lambda_1X_1\ |\  \cdots \ |\  \lambda_{m-1}X_1) \\
    &=\sum_{i=1}^{m} {\lambda_{i-1}X_1A_i} (X_1\ |\ \lambda_1X_1\ |\  \cdots \ |\  \lambda_{m-1}X_1).
        \end{split}
        \end{equation*}
        Hence $XAX=X$ if and only if $\displaystyle \sum_{i=1}^{m} {\lambda_{i-1}X_1A_i}X_1=X_1$, which is equivalent to $\displaystyle \sum_{i=1}^{m} {\lambda_{i-1}A_i}X_1=1$.
       
\end{proof}

In Section \ref{sec 4}, we examine four specific structured matrices associated with rank-two Class III matrices.
To gain insights into the outer inverses of rank-two Class III matrices, it suffices to study the outer inverses of these four specific matrices, just as we did with inner inverses. 
Corollary \ref{cor 51} represents rank-one outer inverses of these structured matrices, particularly in the context of full-rank matrices. 
It is a consequence of Theorem \ref{rank1 OI extended full rank}.

\begin{cor} \label{cor 51}
    Consider $A=\begin{pmatrix}        A_1 \\ A_2    \end{pmatrix} \in \mathbb{K}^{2 \times n}$. 
        Then
    \begin{enumerate}
        \item 
For $A_1=( 1_{1n})$, $A_2=(1_{1n_1}\ |\, -1_{1n_2})$ such that  $n_1=n_2$, it follows
    \begin{equation*}
        A\{2\}_1=\left\{X=(X_1 \ |\ \lambda X_1) \in \mathbb{K}^{n \times 2} :\ \Xi(X_{1})+\lambda ( \Xi(X_{11})-\Xi(X_{12}))=1\right\},
    \end{equation*}
    where $X_1=\begin{pmatrix}            X_{11}  \\            X_{12}        \end{pmatrix} \in \mathbb{K}^{(n_1+n_2) \times 1}$.
        \item For $A_1=( 1_{1n})$, $A_2=(1_{1n_1}\ |\, -1_{1n_2} | 0_{1n_3})$ such that  $n_1=n_2$, it follows
    \begin{equation*}
        A\{2\}_1=\left\{X=(X_1 \ |\ \lambda X_1)\in \mathbb{K}^{n \times 2} :\ \Xi(X_{1}) + \lambda (\Xi(X_{11})-\Xi(X_{12}))=1\right\},
    \end{equation*}
    where $X_1=\begin{pmatrix}
            X_{11}  \\            X_{12} \\            X_{13}        \end{pmatrix} \in \mathbb{K}^{(n_1+n_2+n_3) \times 1}$.
        \item For $A_1=( 1_{1n_1}\ |\ 1_{1n_2}\ |\ 1_{1n_3}\ |\ 0_{1n_4})$, $A_2=(1_{1n_1}\ |\, -1_{1n_2}\ |\ 0_{1n_3}\ |\ 1_{1n_4} )$, it follows
        $X=(X_1 \ |\ \lambda X_1) \in A\{2\}_1$ if and only if 
\begin{equation*}
    \Xi(X_{11})+\Xi(X_{12}) +\Xi(X_{13})+\lambda (\Xi(X_{11})-\Xi(X_{12}) +\Xi(X_{14}))=1,
\end{equation*}
where $X_1=\begin{pmatrix}
            X_{11}  \\            X_{12} \\            X_{13} \\            X_{14}
        \end{pmatrix} \in \mathbb{K}^{(n_1+n_2+n_3+n_4) \times 1}$ and $\lambda \in \mathbb{K}$.
        \item For $A_1=( 1_{1n_1}\ |\ 0_{1n_2})$, $A_2=( 0_{1n_1}\ |\ 1_{1n_2} )$, it follows
         \begin{equation*}
        A\{2\}_1=\left\{X=(X_1 \ |\ \lambda X_1) \in \mathbb{K}^{n \times 2} :\ \Xi(X_{11})+\lambda \Xi(X_{12})=1, \lambda \in \mathbb{K} \right\},
    \end{equation*}
    where $X_1=\begin{pmatrix}            X_{11}  \\            X_{12}        \end{pmatrix} \in \mathbb{K}^{(n_1+n_2) \times 1}$.
         \end{enumerate}
         
\end{cor}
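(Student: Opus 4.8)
The plan is to derive all four items directly from Theorem~\ref{rank1 OI extended full rank} by specializing it to $m=2$. First I would verify that in each item the two rows $A_1,A_2$ are linearly independent, so that $A\in\mathbb K^{2\times n}_2$ and Theorem~\ref{rank1 OI extended full rank} is applicable; this holds precisely because the block sizes are assumed chosen so that $A$ has full row rank $2$ (it would fail, for instance, if one of the nontrivial blocks were empty, collapsing the two rows). Applying the theorem with $m=2$, every $X\in A\{2\}_1$ has the form $X=(X_1\ |\ \lambda X_1)$ with $X_1\in\mathbb K^{n\times 1}$ and $\lambda:=\lambda_1\in\mathbb K$ (with $\lambda_0=1$), and $X\in A\{2\}_1$ if and only if
\[
A_1X_1+\lambda\,A_2X_1=1 .
\]
Hence it only remains to evaluate the two scalars $A_1X_1$ and $A_2X_1$ for each of the four block structures.

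To do so, for a fixed item I would partition $X_1$ conformally with the block columns of $A_1$ and of $A_2$. Since each $A_i$ is a single row assembled from blocks of the forms $1_{1k}$, $-1_{1k}$, $0_{1k}$, the product $A_iX_1$ is the signed sum of the numbers $\Xi(\cdot)$ of the matching sub-blocks of $X_1$, with the signs $+1$, $-1$, $0$ read directly off $A_i$. This gives: in items 1 and 2, $A_1X_1=\Xi(X_1)$ and $A_2X_1=\Xi(X_{11})-\Xi(X_{12})$ (the trailing zero block in item 2 contributing nothing), so the equation becomes $\Xi(X_1)+\lambda\bigl(\Xi(X_{11})-\Xi(X_{12})\bigr)=1$; in item 3, $A_1X_1=\Xi(X_{11})+\Xi(X_{12})+\Xi(X_{13})$ and $A_2X_1=\Xi(X_{11})-\Xi(X_{12})+\Xi(X_{14})$; and in item 4, $A_1X_1=\Xi(X_{11})$ and $A_2X_1=\Xi(X_{12})$. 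Substituting these back into the displayed equation reproduces exactly the four characterizations in the statement.

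There is no genuine analytic obstacle here — the entire substance is already contained in Theorem~\ref{rank1 OI extended full rank}. The only steps requiring attention are organizational: confirming the full-row-rank hypothesis so that the parent theorem is available, and being careful that the single vector $X_1$ is partitioned simultaneously against the two distinct block patterns of $A_1$ and $A_2$ that occur within one and the same item.
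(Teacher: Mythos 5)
Your proposal is correct and follows exactly the route the paper intends: the paper states Corollary \ref{cor 51} as an immediate consequence of Theorem \ref{rank1 OI extended full rank}, and your specialization to $m=2$ with $X=(X_1\ |\ \lambda X_1)$, followed by reading off $A_1X_1$ and $A_2X_1$ as signed sums $\Xi(\cdot)$ of the conformal sub-blocks of $X_1$, is precisely that argument. Nothing differs in substance, so no further comparison is needed.
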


It is important to note that the matrices discussed in Corollary \ref{cor 51} are Bohemian.
By restricting the set of outer inverses to Bohemian matrices concerning the same population, we obtain the outer Bohemian inverses.

\subsubsection{Rank $r$ outer Bohemian inverse}

For a matrix $A \in \mathbb{K}^{m \times n}_r$, we aim to examine outer inverses of rank $r$ (specifically $\{1,2\}$-inverses) of the Class II and Class III matrices.
This investigation will subsequently provide the rank $r$ outer inverses for well-settled matrices.
It is important to note that a result similar to Lemma \ref{lem2} does not apply to outer inverses.
For instance, Let $A=\begin{pmatrix}
    A_1 \\A_2
\end{pmatrix}$, where
 $A_1=\begin{pmatrix}
    1 & 0 & 0 \\    0 & 0 & 0
\end{pmatrix}$ and $A_2=\begin{pmatrix}
    0 & 1 & 0 \\    0 & 0 & 0\end{pmatrix} $.
 Clearly $A_1A_2^*=0$. 
Also, $X=(X_1 \ |\  X_2)=\begin{pmatrix}
    1&0&0&1\\    0&1&1&0\\    0&0&0&0\end{pmatrix} \in A\{1,2\}$. 
Here $X_1=\begin{pmatrix}
    1&0\\    0&1\\    0&0
\end{pmatrix} \in A_1 \{1\}$ and $X_2=\begin{pmatrix}
    0&1\\    1&0\\    0&0
\end{pmatrix} \in A_2 \{1\}$ but $X_1 \notin A_1 \{1,2\}$ and $X_2 \notin A_2 \{1,2\}$. 
Hence, we will study the outer inverses of full-row rank Class II matrices.

\begin{lem} \label{lemA12}
    Let $A=\begin{pmatrix}
        A_1 \\ \vdots \\ A_m
    \end{pmatrix} \in \mathbb{K}^{m \times n}_m$ and $A_i \in \mathbb{K}^{m_i \times n}_{m_i}$ for all $i$.
    Then 
\begin{equation*}
    A\{1,2\} =A\{1\} \subseteq \left\{X=(X_1\ |\ X_2 \ |\  \cdots \ |\  X_m) \in \mathbb{K}^{n \times m} :\ X_i \in A_i\{1,2\} \ \forall i\right\}.
\end{equation*}
\end{lem}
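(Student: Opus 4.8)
\emph{Proof sketch.} The idea is that a matrix of full row rank is right-invertible and that, for such a matrix, the inner inverses are exactly its right inverses (which are automatically reflexive). First I would record that $A \in \mathbb{K}^{m\times n}_m$ has linearly independent rows, so $YA = 0$ forces $Y = 0$; applying this with $Y = AX - I_m$ shows that $AXA = A$ is equivalent to $AX = I_m$, whence $A\{1\} = \{X \in \mathbb{K}^{n\times m}: AX = I_m\}$. Any such $X$ also satisfies $XAX = X(AX) = X$, so it lies in $A\{2\}$; combined with the trivial inclusion $A\{1,2\} \subseteq A\{1\}$ this gives $A\{1,2\} = A\{1\}$. (Equivalently: $\ra{X} \ge \ra{AXA} = \ra{A} = m$ while $\ra{X} \le m$ since $X$ is $n\times m$, so every inner inverse is rank-preserving, hence reflexive.)

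For the inclusion, I would partition $X = (X_1\ |\ \cdots\ |\ X_m)$ conformally with the row partition of $A$, so that $X_i \in \mathbb{K}^{n\times m_i}$ and the $(i,j)$ block of $AX$ is $A_iX_j$. In the induced block form the $(i,j)$ block of $I_m$ is $I_{m_i}$ if $i = j$ and the zero block otherwise, so $AX = I_m$ reads $A_iX_j = 0$ for $i \neq j$ together with $A_iX_i = I_{m_i}$ for every $i$. In particular $X_i$ is a right inverse of the full-row-rank matrix $A_i \in \mathbb{K}^{m_i\times n}_{m_i}$, so the previous paragraph applied to $A_i$ in place of $A$ gives $X_i \in A_i\{1,2\}$, which is precisely the asserted membership.

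The proof is short and I foresee no real obstacle. The one point deserving emphasis — and the actual content of the lemma — is that it is the \emph{full row rank} hypothesis, not merely $AXA = A$, that turns an inner inverse into a bona fide right inverse and thereby decouples the block equations into $A_iX_i = I_{m_i}$; without it the membership $X_i \in A_i\{1,2\}$ fails, as the example preceding the lemma illustrates. Everything else is conformal-partition bookkeeping.
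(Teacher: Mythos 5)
Your proof is correct and follows essentially the same route as the paper: use full row rank to upgrade $AXA=A$ to $AX=I_m$ (hence $A\{1\}=A\{1,2\}$), then read the block form of $AX=I_m$ to get $A_iX_j=0$ for $i\neq j$ and $A_iX_i=I_{m_i}$, so $X_i\in A_i\{1,2\}$. You merely spell out the cancellation step $AX=I_m$ that the paper states without justification, which is a welcome but minor addition.
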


\begin{proof}
    Let $X=(X_1 \ |\ X_2 \ |\ \cdots \ |\ X_m) \in A\{1\}$. 
Since $A$ is full-row rank matrix, it follows $AX=I$, and hence $X \in A\{1,2\}$ as well as
        $$\begin{pmatrix}        A_1 \\ \vdots \\ A_m    \end{pmatrix} 
           (X_1 \ |\ X_2 \ |\ \cdots \ |\ X_m) = \begin{pmatrix}
            A_1X_1 & \cdots & A_1X_m \\            \vdots & \ddots & \vdots \\            A_mX_1 & \cdots & A_mX_m        \end{pmatrix}=I_m.$$
Therefore, $A_iX_j=0 $ for all $ i \neq j$ and $A_iX_i= I_{m_i} $ for all $ i$. 
As a result, $X_i \in A_i\{1,2\} $ for all $ i$.
\end{proof}

 \begin{thm} \label{thm 5.16}
     Let $A=\begin{pmatrix}
        A_1 \\ \vdots \\ A_m
    \end{pmatrix} \in \mathbb{K}^{m \times n}_m$ and $A_i \in \mathbb{K}^{m_i \times n}_{m_i}$ for all $i$. In this case, it follows
\begin{equation} \label{eq A12}
    A\{1,2\} =A\{1\} = \left\{X=(X_1 \ |\ X_2 \ |\ \cdots \ |\ X_m)\in \mathbb{K}^{n \times m} :\ X_i \in A_i\{1,2\}, A_iX_j=0 \ \forall i,j, i \neq j \right\}.
\end{equation}
 \end{thm}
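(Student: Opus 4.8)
The plan is to prove the two inclusions separately, observing that the inclusion ``$\subseteq$'' has essentially already been carried out inside the proof of Lemma \ref{lemA12}. Indeed, from that proof, if $X=(X_1\mid\cdots\mid X_m)\in A\{1\}$, then full row rank of $A$ gives $AX=I_m$, whence $X\in A\{1,2\}$; moreover the block identity $AX=I_m$ forces $A_iX_j=0$ for $i\neq j$ and $A_iX_i=I_{m_i}$ for every $i$, and the latter immediately yields $X_i\in A_i\{1,2\}$. This shows that $A\{1,2\}=A\{1\}$ is contained in the right-hand set of \eqref{eq A12}.

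For the reverse inclusion I would first record the reduction that, since each $A_i$ has full row rank $m_i$, it admits a right inverse $R_i$ with $A_iR_i=I_{m_i}$; consequently, for a matrix $X_i\in\mathbb{K}^{n\times m_i}$ one has $X_i\in A_i\{1,2\}$ if and only if $A_iX_i=I_{m_i}$. The forward direction here is the load-bearing step: postmultiplying $A_iX_iA_i=A_i$ by $R_i$ gives $A_iX_i=A_iX_i(A_iR_i)=(A_iX_iA_i)R_i=A_iR_i=I_{m_i}$. The converse is immediate, since $A_iX_i=I_{m_i}$ yields both $A_iX_iA_i=A_i$ and $X_iA_iX_i=X_i(A_iX_i)=X_i$.

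Now take $X=(X_1\mid\cdots\mid X_m)$ with $X_i\in A_i\{1,2\}$ and $A_iX_j=0$ for all $i\neq j$. Block multiplication gives
\[
AX=\begin{pmatrix} A_1X_1 & \cdots & A_1X_m\\ \vdots & \ddots & \vdots\\ A_mX_1 & \cdots & A_mX_m\end{pmatrix},
\]
whose off-diagonal blocks vanish by hypothesis and whose $i$-th diagonal block equals $A_iX_i=I_{m_i}$ by the equivalence above; hence $AX=I_m$. Then $AXA=(AX)A=A$ and $XAX=X(AX)=X$, so $X\in A\{1,2\}$, and combined with $A\{1,2\}=A\{1\}$ this completes the proof.

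I expect no serious obstacle: the only genuinely essential point is the equivalence ``$X_i\in A_i\{1,2\}\iff A_iX_i=I_{m_i}$'' for full-row-rank $A_i$, together with the use of full row rank of $A$ to pass from $AX=I_m$ to $AXA=A$. The example given just before Lemma \ref{lemA12} shows this structure genuinely fails without the full-row-rank assumption, so the argument is not expected to extend to the general rank-$r$ Class II or Class III case; everything else is routine block bookkeeping.
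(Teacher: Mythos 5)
Your proof is correct and follows essentially the same route as the paper: the forward inclusion via Lemma \ref{lemA12} (full row rank of $A$ giving $AX=I_m$), and the reverse inclusion by assembling $AX=I_m$ from $A_iX_i=I_{m_i}$ and $A_iX_j=0$, then reading off both $AXA=A$ and $XAX=X$. The only difference is that you explicitly justify the step $X_i\in A_i\{1,2\}\Rightarrow A_iX_i=I_{m_i}$ with a right inverse $R_i$, which the paper simply asserts.
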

 
 \begin{proof}
     From Lemma \ref{lemA12}, we conclude $A\{1,2\} =A\{1\}$ implies $X$ belongs to the right hand side of the equation \eqref{eq A12}. 
     Now, let $X$ belong to the right hand side of \eqref{eq A12}. 
 Since $X_i \in A_i\{1,2\}$, it follows $A_iX_i=I_{m_i}$ for all $i$. 
 Also, $A_iX_j=0$ for all $i \neq j$ gives $AX=I_m$.
Therefore both matrix equations $XAX=X$ and $AXA=A$ hold. 
 \end{proof}
 
The incidence matrix of a directed star graph is always a Bohemian matrix with respect to the population $\mathbb{P}=\{0, \pm 1\}$, and it possesses full column rank \cite{bapat2010graphs}. 
Therefore, Theorem \ref{thm 5.16} can be applied to find the set of $\{1,2\}$-inverses (or $\{1,2\}$-Bohemian inverses) of these incidence matrices.
This possibility will be demonstrated in Example \ref{ExmIncBoh}.

\begin{ex}\label{ExmIncBoh}
Consider the graph given in Figure \ref{fig:stargraph}
\begin{figure}[ht]
\centering
\begin{tikzpicture}[->, >=stealth, node distance=1cm] 

  % Central node
  \node[circle, draw, fill=blue!20] (1) at (0,0) {1};

  % Outer nodes
  \node[circle, draw, fill=red!20] (4) at (0:1.5cm) {4};
  \node[circle, draw, fill=red!20] (2) at (90:1.5cm) {2};
  \node[circle, draw, fill=red!20] (3) at (180:1.5cm) {3};
  \node[circle, draw, fill=red!20] (5) at (270:1.5cm) {5};
\node at (0.7, -0.2) {$e_4$};
\node at (0.3, 0.7) {$e_1$};
\node at (-0.3, -0.7) {$e_3$};
\node at (-0.7, 0.2) {$e_2$};

  % Arrows from center to outer nodes
  \draw[->] (1) -- (4);
  \draw[->] (1) -- (2);
  \draw[->] (1) -- (3);
  \draw[->] (1) -- (5);
 
\end{tikzpicture}
\caption{A directed star graph with 5 vertices}
\label{fig:stargraph}
\end{figure}
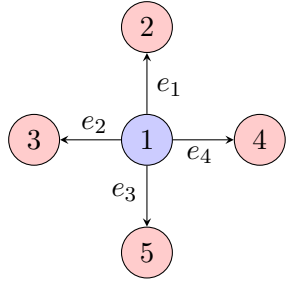

The incidence matrix of the star graph given in Figure \ref{fig:stargraph} is equal to 
$Q=\begin{pmatrix}        1 &1&1&1\\        -1&0&0&0\\        0&-1&0&0\\        0&0&0&-1\\        0&0&-1&0    \end{pmatrix}$. 
 Consider 
    $A=Q^\T=\begin{pmatrix}        1 &-1&0&0&0\\        1&0&-1&0&0\\        1&0&0&0&-1\\        1&0&0&-1&0\\    \end{pmatrix}=
    \begin{pmatrix}        A_1 \\ A_2 \\ A_3 \\ A_4    \end{pmatrix}$, where $A_i \in \mathbb{P}^{1 \times 5}$
and $X=(X_1\ |\  X_2\ |\ X_3\ |\  X_4)$, where $X_i \in \mathbb{P}^{5 \times 1}$
which satisfies the condition $X_i \in A_i\{1,2\}$ for all $i$.
It implies 
$$x_{11}-x_{12}=x_{21}-x_{23}=x_{31}-x_{35}=x_{41}-x_{44}=1.$$ 
Moreover $\ip{A_1}{X_2}=\ip{A_1}{X_3}=\ip{A_1}{X_4}=0$ gives $$x_{21}-x_{22}=x_{31}-x_{32}=x_{41}-x_{42}=0,$$
    while $\ip{A_2}{X_1}=\ip{A_2}{X_3}=\ip{A_2}{X_4}=0$ gives $$x_{11}-x_{13}=x_{31}-x_{33}=x_{41}-x_{43}=0.$$
    Further, $\ip{A_3}{X_1}=\ip{A_3}{X_2}=\ip{A_3}{X_4}=0$ gives $$x_{11}-x_{15}=x_{21}-x_{25}=x_{41}-x_{45}=0,$$ and $\ip{A_4}{X_1}=\ip{A_4}{X_2}=\ip{A_4}{X_3}=0$ gives $$x_{11}-x_{14}=x_{21}-x_{24}=x_{31}-x_{34}=0.$$ The equations obtained using the inner products give the following solution:
    \begin{equation*}
    \aligned
        x_{11}&=x_{13}=x_{14}=x_{15}=a,\ \ a\in \mathbb{K},\\
        x_{21}&=x_{22}=x_{25}=x_{24}=b,\ \ b\in \mathbb{K},\\
        x_{31}&=x_{32}=x_{33}=x_{34}=c,\ \ c \in \mathbb{K},\\
        x_{41}&=x_{42}=x_{43}=x_{45}=d,\ \ d \in \mathbb{K}.
        \endaligned
        \end{equation*}

    Hence, the set of $\{1,2\}$-inverses of $A$ is given by
    $$A\{1,2\} =\left\{X= \begin{pmatrix}
        a & b&c&d\\
        a-1 &b &c&d\\
        a &b-1&c&d\\
        a&b&c&d-1\\
        a&b &c-1&d
    \end{pmatrix} \in \mathbb{K}^{5 \times 4}, a,b,c,d \in \mathbb{K}\right\}$$ 
    and for $\mathbb{P}=\{0,\pm 1\}$, the set of reflexive inner Bohemian inverses is as follows
    $$A_{\mathbb{P}}\{1,2\} =\left\{X= \begin{pmatrix}
        a & b&c&d\\        a-1 &b &c&d\\        a &b-1&c&d\\        a&b&c&d-1\\        a&b &c-1&d    \end{pmatrix} \in \mathbb{P}^{5 \times 4}, a,b,c,d \in \{0,1\} \right\}.$$
\end{ex}

In Section \ref{sec 4}, we noted that there are four distinct cases for Class III matrices when considering rank-two matrices. 
In Theorem \ref{rank 2 outer}, we will present rank-two outer inverses for full-row rank matrices in each case.

\begin{thm} \label{rank 2 outer}
The following statements are true for the block matrix $A=\begin{pmatrix}        A_1 \\ A_2    \end{pmatrix} \in \mathbb{K}^{2 \times n}$: 
    \begin{enumerate}
        \item If $A_1=( 1_{1n})$, $A_2=(1_{1n_1}\ |\, -1_{1n_2})$ such that  $n_1=n_2$, then   
    \begin{equation*}
        A\{1,2\}=\left\{X=(X_1\ |\ X_2) \in \mathbb{K}^{n \times 2} :\ \Xi(X_{11})=\Xi(X_{12})=\Xi(X_{21})=-\Xi(X_{22})=\frac{1}{2}\right\},
    \end{equation*}
    where $X_i=\begin{pmatrix}
            X_{i1}  \\            X_{i2}
        \end{pmatrix} \in \mathbb{K}^{(n_1+n_2) \times 1} $ for $i \in \{1,2\}$.
        \item If $A_1=( 1_{1n})$, $A_2=(1_{1n_1}\ | -1_{1n_2}\ |\ 0_{1n_3})$ such that  $n_1=n_2$, then
    \begin{equation*}
        A\{1,2\}=\left\{X=(X_1\ |\ X_2) \in \mathbb{K}^{n \times 2} :\ X_i \in A_i\{1\}\, \forall i,\ \Xi(X_{11})=\Xi(X_{12}), \Xi(X_{2})=0\right\},
    \end{equation*}
    where $X_i=\begin{pmatrix}
            X_{i1}  \\            X_{i2} \\            X_{i3}
        \end{pmatrix} \in \mathbb{K}^{(n_1+n_2+n_3) \times 1}$ for $i \in \{1,2\}$.
        \item If $A_1=( 1_{1n_1}\ |\ 1_{1n_2}\ |\ 1_{1n_3} |\ 0_{1n_4})$, $A_2=(1_{1n_1}\ |\, -1_{1n_2}\ |\ 0_{1n_3}\ |\ 1_{1n_4} )$, then 
        $X=(X_1\ |\ X_2) \in \mathbb{K}^{n \times 2} \in A\{1,2\}$ 
if and only if 
\begin{enumerate}
    \item $\Xi(X_{13})=1-\Xi(X_{11})-\Xi(X_{12}),$
    \item $\Xi(X_{14})=\Xi(X_{12})-\Xi(X_{11}),$
    \item $-\Xi(X_{23})=\Xi(X_{21})+\Xi(X_{22}),$
    \item $\Xi(X_{24})=1+\Xi(X_{22})-\Xi(X_{21}),$
\end{enumerate}
where $X_i=\begin{pmatrix}
            X_{i1}  \\            X_{i2} \\            X_{i3} \\            X_{i4}        \end{pmatrix} \in \mathbb{K}^{(n_1+n_2+n_3+n_4) \times 1}$ for $i \in \{1,2\}$.
        \item If $A_1=( 1_{1n_1}\ |\ 0_{1n_2})$, $A_2=( 0_{1n_1}\ |\ 1_{1n_2} )$, then
         \begin{equation*}
        A\{1,2\}=\left\{X=(X_1\ |\ X_2) \in \mathbb{K}^{n \times 2} :\ \Xi(X_{11})= \Xi(X_{22})=1,   \Xi(X_{12})=\Xi(X_{21})=0\right\},
    \end{equation*}
    where $X_i=\begin{pmatrix}
            X_{i1}  \\            X_{i2}        \end{pmatrix} \in \mathbb{K}^{(n_1+n_2) \times 1}$ for $i \in \{1,2\}$.
        \end{enumerate}
\end{thm}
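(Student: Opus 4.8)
The strategy is to deduce the four descriptions from the inner-inverse results already established, using the elementary fact that a full-row-rank matrix has no outer inverses beyond its inner inverses. First I would verify that in each of the four cases the block matrix $A=\begin{pmatrix} A_1\\ A_2\end{pmatrix}\in\mathbb{K}^{2\times n}$ has rank exactly $2$: its two rows span distinct one-dimensional subspaces — in \textbf{S1} they are multiples of $(1,\dots,1)$ and $(1,\dots,1,-1,\dots,-1)$; in \textbf{S4} they are $(1_{1n_1}\,|\,0_{1n_2})$ and $(0_{1n_1}\,|\,1_{1n_2})$; in \textbf{S2} and \textbf{S3} they are visibly non-proportional — hence the rows are linearly independent and $A$ is full-row rank. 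Each one-row block $A_i\in\mathbb{K}^{1\times n}$ is nonzero, so it has full row rank $1$. Thus the hypotheses of Theorem~\ref{thm 5.16} (equivalently Lemma~\ref{lemA12}) hold with $m=2$ and $m_1=m_2=1$, giving $A\{1,2\}=A\{1\}$ in all four cases; it then remains to identify $A\{1\}$.

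For structures \textbf{S1}, \textbf{S2} and \textbf{S3} the set $A\{1\}$ has already been computed, for arbitrary block heights $m_1,m_2$, in Theorems~\ref{thm51}, \ref{th4.5} and \ref{Thm48Boh}, respectively. Specializing those statements to $m_1=m_2=1$ turns each $X_i$ into a single column in $\mathbb{K}^{(n_1+\cdots)\times 1}$ and reproduces verbatim the equalities among the quantities $\Xi(X_{ij})$ (and, for \textbf{S2}, the condition $X_i\in A_i\{1\}$). Substituting $m_1=m_2=1$ therefore yields items (1), (2) and (3) at once.

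For \textbf{S4} the matrix $A=\begin{pmatrix} 1_{1n_1} & 0_{1n_2}\\ 0_{1n_1} & 1_{1n_2}\end{pmatrix}$ is a pure well-settled matrix whose inner inverses are described in \cite{chan2022inner}; alternatively, since it is not covered by Theorems~\ref{thm51}--\ref{Thm48Boh}, I would argue directly. Writing $X=(X_1\,|\,X_2)$ with $X_i=\begin{pmatrix} X_{i1}\\ X_{i2}\end{pmatrix}$, a one-line block product (or Lemma~\ref{multiply full matrix}) gives
\begin{equation*}
AX=\begin{pmatrix} \Xi(X_{11}) & \Xi(X_{21})\\ \Xi(X_{12}) & \Xi(X_{22})\end{pmatrix};
\end{equation*}
since $A$ has full row rank, $AXA=A$ is equivalent to $AX=I_2$, i.e.\ to $\Xi(X_{11})=\Xi(X_{22})=1$ and $\Xi(X_{12})=\Xi(X_{21})=0$, and then $XAX=X$ holds automatically. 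This is item (4).

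I do not expect a substantive obstacle: the proposition is essentially the specialization of earlier inner-inverse theorems combined with the identity $A\{1,2\}=A\{1\}$ valid for full-row-rank $A$. The only points requiring care are (i) confirming, case by case, that the hypotheses underpinning Theorems~\ref{thm 5.16}, \ref{thm51}, \ref{th4.5} and \ref{Thm48Boh} remain legitimate when $m_1=m_2=1$ (in particular that each $A_i$ is rank-one and that $A$ and its blocks have full row rank), and (ii) treating \textbf{S4} separately, since unlike \textbf{S1}--\textbf{S3} it is a well-settled matrix and hence falls outside the scope of those three theorems.
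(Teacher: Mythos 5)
Your proposal is correct and follows essentially the same route as the paper: the paper's proof is the one-line observation that $X\in A\{1,2\}$ iff $X\in A\{1\}$ with $\ra{X}=\ra{A}$ (which for these full-row-rank matrices gives $A\{1,2\}=A\{1\}$, exactly what you extract from Lemma~\ref{lemA12}/Theorem~\ref{thm 5.16}), combined with the inner-inverse characterizations of Theorems~\ref{thm51}, \ref{th4.5} and \ref{Thm48Boh} specialized to $m_1=m_2=1$. Your explicit treatment of the well-settled case \textbf{S4}, which the paper leaves to the results of \cite{chan2022inner}, is a harmless and correct addition.
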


\begin{proof}
    The proof follows from the statement that $X \in A\{1\}$ and $\ra{X}=\ra{A}$ if and only if $X \in A\{1,2\}$.
\end{proof}

Therefore, according to Corollary \ref{cor 51} and Theorem \ref{rank 2 outer}, the complete set of outer inverses of a matrix $A \in \mathbb{P}^{2 \times n}_2$ contained in the Class III can be derived using
$$ A\{2\}=\{0\} \cup  A\{2\}_1 \cup A\{2\}_2.$$
In particular, a consequence of the above results is the following statement for outer inverses of well-settled matrices, verified in Theorem \ref{outer}.

\begin{thm} \label{outer}
       For $A=\begin{pmatrix}
        A_1 \\ A_2
    \end{pmatrix} \in \mathbb{K}^{2 \times n}$ defined by blocks $A_1=( 1_{1n_1}\ |\ 0_{1n_2})$, $A_2=( 0_{1n_1}\ |\ 1_{1n_2} )$, the set of outer inverses of $A$ is given by
    \begin{equation*}
        \begin{split}
            A\{2\}=\{0\} \cup  A\{2\}_1 \cup A\{2\}_2,
        \end{split}
    \end{equation*}
    where 
    \begin{equation*}
    \begin{split}
         A\{2\}_1 
         &=\left\{X=(X_1\ |\  \lambda X_1) \in \mathbb{K}^{n \times 2} :\ \Xi(X_{11})+\lambda \Xi(X_{12})=1, \lambda \in \mathbb{K}\right\},\\
        A\{2\}_2 &=\left\{X=(X_1\ |\  X_2) \in \mathbb{K}^{n \times 2} :\ \Xi(X_{11})= \Xi(X_{22})=1,   \Xi(X_{12})=\Xi(X_{21})=0\right\},
        \end{split}
    \end{equation*}
where 
$X_i=\begin{pmatrix}
            X_{i1}  \\
            X_{i2}
        \end{pmatrix} \in \mathbb{K}^{(n_1+n_2) \times 1} $ for $i \in \{1,2\}$.
\end{thm}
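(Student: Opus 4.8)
The plan is to stratify the set of outer inverses by the rank of the outer inverse and then read off each stratum from the results already proved. For any $X \in A\{2\}$ one has $XAX = X$, hence $\ra{X} \le \ra{A}$. Since $A_1 = (1_{1n_1}\ |\ 0_{1n_2})$ and $A_2 = (0_{1n_1}\ |\ 1_{1n_2})$ are nonzero rows with disjoint supports, $\ra{A} = 2$, so $\ra{X} \in \{0,1,2\}$. This yields the exhaustive (in fact disjoint) decomposition $A\{2\} = A\{2\}_0 \cup A\{2\}_1 \cup A\{2\}_2$, in which $A\{2\}_0 = \{0\}$ is the trivial case; it remains only to identify $A\{2\}_1$ and $A\{2\}_2$ explicitly.

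For the rank-one stratum, I would observe that $A$ is exactly a matrix of the type treated in Corollary \ref{cor 51}(4), namely the structure {\bf S4} specialized to $m_1 = m_2 = 1$. Applying that corollary verbatim gives $A\{2\}_1 = \left\{X = (X_1\ |\ \lambda X_1) \in \mathbb{K}^{n \times 2} :\ \Xi(X_{11}) + \lambda\, \Xi(X_{12}) = 1,\ \lambda \in \mathbb{K}\right\}$ with $X_1 = \begin{pmatrix} X_{11} \\ X_{12} \end{pmatrix} \in \mathbb{K}^{(n_1+n_2)\times 1}$, which is precisely the claimed description of $A\{2\}_1$. For the rank-two stratum, I would invoke the equivalence already used in the proof of Theorem \ref{rank 2 outer}: a matrix $X$ with $XAX = X$ and $\ra{X} = \ra{A}$ is automatically a $\{1,2\}$-inverse, and conversely every element of $A\{1,2\}$ has rank $\ra{A} = 2$; hence $A\{2\}_2 = A\{1,2\}$. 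Theorem \ref{rank 2 outer}(4) then supplies the explicit form $A\{2\}_2 = \left\{X = (X_1\ |\ X_2) \in \mathbb{K}^{n \times 2} :\ \Xi(X_{11}) = \Xi(X_{22}) = 1,\ \Xi(X_{12}) = \Xi(X_{21}) = 0\right\}$, and assembling the three strata completes the proof.

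Since the argument is essentially an assembly of previously established facts, I do not expect a genuine obstacle here; the only points that require a moment's care are verifying that $\ra{A} = 2$ (which makes the three-way rank split exhaustive and places the top stratum at rank $2$) and recalling the standard identity $A\{2\} \cap \{\,\ra{X} = \ra{A}\,\} = A\{1,2\}$, both of which are immediate under the stated hypotheses $n_1, n_2 \ge 1$.
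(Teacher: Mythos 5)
Your proposal is correct and follows essentially the same route as the paper, which derives the result from Theorem \ref{rank1 OI extended full rank} (equivalently Corollary \ref{cor 51}(4)) for the rank-one stratum and Theorem \ref{rank 2 outer}(4) for the rank-two stratum, with the rank stratification $A\{2\}=\{0\}\cup A\{2\}_1\cup A\{2\}_2$ and the identification $A\{2\}_2=A\{1,2\}$ left implicit in the surrounding discussion. Your write-up simply makes those implicit steps explicit, which is fine.
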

\begin{proof}
The proof follows from Theorem \ref{rank1 OI extended full rank} and Theorem \ref{rank 2 outer}(4).

\end{proof}

\begin{thm}
     Consider $A=\begin{pmatrix}
        A_1 \\ A_2
    \end{pmatrix}  \in \mathbb{K}^{2 \times n}$, $A_1=( 1_{1n_1}\ |\ 0_{1n_2})$, $A_2=( 0_{1n_1}\ |\ 1_{1n_2} )$. 
    Then, for $\mathbb{P}=\{0, \pm 1\}$
    \begin{equation*}
        \# A_{\mathbb{P}}\{2\}= 1+3^{n_2} \sum_{s=0}^{\lfloor{n_1-1}/{2}\rfloor} {n_1 \choose s}{{n_1-s} \choose {s+1}}+2 \sum_{s=0}^{\lfloor{n-1}/{2}\rfloor} {n \choose s}{{n-s} \choose {s+1}}+\# A_{\mathbb{P}}\{2\}_2,
    \end{equation*}
    where $$\# A_{\mathbb{P}}\{2\}_2=\sum_{s=0}^{\lfloor{n_1-1}/{2}\rfloor} {n_1 \choose s}{{n_1-s} \choose {s+1}} \sum_{s=0}^{\lfloor{n_2-1}/{2}\rfloor} {n_2 \choose s}{{n_2-s} \choose {s+1}} \sum_{s=0}^{{n_1}} {n_1 \choose s}{{n_1-s} \choose {s}}\sum_{s=0}^{{n_2}} {n_2 \choose s}{{n_2-s} \choose {s}}.$$
\end{thm}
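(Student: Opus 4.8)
The plan is to combine the structural decomposition of $A\{2\}$ supplied by Theorem \ref{outer} with an elementary count of Bohemian vectors having a prescribed entry-sum. Since every outer inverse $X$ of $A$ satisfies $\ra{X}\le\ra{A}=2$, Theorem \ref{outer} expresses $A\{2\}$ as the union $\{0\}\cup A\{2\}_1\cup A\{2\}_2$ of pairwise disjoint sets (here $A\{2\}_s$ collects the outer inverses of rank exactly $s$). Intersecting with $\mathbb{P}^{n\times 2}$ and using $0\in\mathbb{P}^{n\times2}$, I would get $\#A_{\mathbb{P}}\{2\}=1+\#A_{\mathbb{P}}\{2\}_1+\#A_{\mathbb{P}}\{2\}_2$, so it remains to evaluate the last two summands. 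For $t\in\{0,1\}$ and $N\ge1$, let $c_t(N)$ denote the number of vectors in $\mathbb{P}^{N\times1}$ whose entries sum to $t$; placing first the $s$ entries equal to $-1$ and then the $s+1$ (resp.\ $s$) entries equal to $+1$ among the remaining positions gives $c_1(N)=\sum_{s=0}^{\lfloor(N-1)/2\rfloor}{N\choose s}{{N-s}\choose{s+1}}$ and $c_0(N)=\sum_{s=0}^{N}{N\choose s}{{N-s}\choose{s}}$.

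To count $\#A_{\mathbb{P}}\{2\}_1$ I would use the description $A\{2\}_1=\{(X_1\mid\lambda X_1):\Xi(X_{11})+\lambda\,\Xi(X_{12})=1\}$ from Theorem \ref{outer}, with $X_1=\begin{pmatrix}X_{11}\\X_{12}\end{pmatrix}$, $X_{11}\in\mathbb{K}^{n_1\times1}$, $X_{12}\in\mathbb{K}^{n_2\times1}$. The defining equation forces $X_1\ne0$; hence if $(X_1\mid\lambda X_1)\in\mathbb{P}^{n\times2}$, then applying $\lambda$ to a nonzero ($=\pm1$) entry of $X_1$ shows $\lambda\in\{0,\pm1\}$, and, since $X_1\ne0$ is the first column, $\lambda$ is uniquely determined by $X$. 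This splits $A_{\mathbb{P}}\{2\}_1$ into three disjoint parts according to $\lambda\in\{0,1,-1\}$ (the part $\lambda=0$ is separated from the others by having a zero second column). For $\lambda=0$ the condition reads $\Xi(X_{11})=1$ with $X_{12}\in\mathbb{P}^{n_2\times1}$ arbitrary, contributing $3^{n_2}c_1(n_1)$; for $\lambda=1$ it reads $\Xi(X_1)=1$, contributing $c_1(n)$; for $\lambda=-1$ it reads $\Xi(X_{11})-\Xi(X_{12})=1$, and the sign-flip bijection $X_{12}\mapsto-X_{12}$ on $\mathbb{P}^{n_2\times1}$ turns this into $\Xi(X_1)=1$, contributing a further $c_1(n)$. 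Hence $\#A_{\mathbb{P}}\{2\}_1=3^{n_2}c_1(n_1)+2c_1(n)$, which is exactly the pair of middle terms in the asserted formula.

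To count $\#A_{\mathbb{P}}\{2\}_2$ I would use $A\{2\}_2=\{(X_1\mid X_2):\Xi(X_{11})=\Xi(X_{22})=1,\ \Xi(X_{12})=\Xi(X_{21})=0\}$ from Theorem \ref{outer} (equivalently Theorem \ref{rank 2 outer}(4)), where $X_i=\begin{pmatrix}X_{i1}\\X_{i2}\end{pmatrix}$. The four blocks $X_{11},X_{21}\in\mathbb{P}^{n_1\times1}$ and $X_{12},X_{22}\in\mathbb{P}^{n_2\times1}$ are each subject to a single entry-sum condition and otherwise vary independently, so the count factors as $c_1(n_1)\,c_0(n_1)\,c_1(n_2)\,c_0(n_2)$; substituting the closed forms of $c_0$ and $c_1$ gives precisely the displayed expression for $\#A_{\mathbb{P}}\{2\}_2$. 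Adding the three contributions $1$, $3^{n_2}c_1(n_1)+2c_1(n)$, and $c_1(n_1)c_0(n_1)c_1(n_2)c_0(n_2)$ yields the claimed identity.

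The vector counting is routine once the two binomial identities for $c_0,c_1$ are in hand. The step that needs genuine care is the handling of $A_{\mathbb{P}}\{2\}_1$: I would need to justify cleanly that Bohemianness together with $X_1\ne0$ pins $\lambda$ to $\{0,\pm1\}$ and that the three resulting families are literally disjoint (so that plain addition, with no inclusion--exclusion, is valid), and --- in the setup --- that $A\{2\}=\{0\}\cup A\{2\}_1\cup A\{2\}_2$ is a partition and not merely a covering. I expect this bookkeeping, rather than any analytic difficulty, to be the main thing to get right.
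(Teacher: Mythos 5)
Your proposal follows essentially the same route as the paper: both start from the partition $A\{2\}=\{0\}\cup A\{2\}_1\cup A\{2\}_2$ of Theorem \ref{outer}, split $A_{\mathbb{P}}\{2\}_1$ into the cases $\lambda\in\{0,1,-1\}$, and count vectors with prescribed entry sums to obtain the three displayed terms. The only difference is cosmetic: where the paper cites \cite[Corollaries 2(1) and 3(1)]{chan2022inner} together with Theorem \ref{th 2.6} and Corollary \ref{cor 3.7} for the counts (in particular identifying $\#A_{\mathbb{P}}\{2\}_2=\#A_{\mathbb{P}}\{1\}$), you derive the same binomial sums directly via the functions $c_0,c_1$, a sign-flip bijection for $\lambda=-1$, and an explicit disjointness check, which is a perfectly sound substitute.
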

\begin{proof}
    The set $A\{2\}$ is determined in Theorem \ref{outer}; hence its cardinality is equal to
    $$\# A_{\mathbb{P}}\{2\}= 1+ \# A_{\mathbb{P}}\{2\}_1 +\# A_{\mathbb{P}}\{2\}_2$$ 
    and
$\# A_{\mathbb{P}}\{2\}_2=\# A_{\mathbb{P}}\{1\}$. 
Consequently, the result of \cite[Corollary 3(1)]{chan2022inner} yields
    $$\# A_{\mathbb{P}}\{2\}_2=\sum_{s=0}^{\lfloor{n_1-1}/{2}\rfloor} {n_1 \choose s}{{n_1-s} \choose {s+1}} \sum_{s=0}^{\lfloor{n_2-1}/{2}\rfloor} {n_2 \choose s}{{n_2-s} \choose {s+1}} \sum_{s=0}^{{n_1}} {n_1 \choose s}{{n_1-s} \choose {s}}\sum_{s=0}^{{n_2}} {n_2 \choose s}{{n_2-s} \choose {s}}.$$
    Now we will find  
$\# A_{\mathbb{P}}\{2\}_1$, i.e., 
$$\#\left\{X=(X_1\ |\ \lambda X_1) \in \mathbb{K}^{n \times 2} :\ \Xi(X_{11})+\lambda \Xi(X_{12})=1, \lambda \in \{0,\pm 1\}\right\}.$$
 First let $\lambda=0$, then we get $\Xi(X_{11})=1$. 
 Further by \cite[Corollary 2(1)]{chan2022inner}
        $$\#\{X_{11} \in \mathbb{P}^{n_1 \times 1}:\ \Xi(X_{11})=1\}=  \sum_{s=0}^{\lfloor{n_1-1}/{2}\rfloor} {n_1 \choose s}{{n_1-s} \choose {s+1}}.$$
        As a result, 
        \begin{equation}
            \#\left\{X \in \mathbb{P}^{n \times 1}:\ \Xi(X_{11})=1\right\}=  3^{n_2} \sum_{s=0}^{\lfloor{n_1-1}/{2}\rfloor} {n_1 \choose s}{{n_1-s} \choose {s+1}}.
        \end{equation}
        Furthermore, for $\lambda=1$ the equation reduces to $\Xi(X_{11})+ \Xi(X_{12})=1$ i.e $\Xi(X_{1})=1$. 
Therefore, it is evident that
        \begin{equation}
            \#\{X \in \mathbb{P}^{n \times 1}:\ \Xi(X_{1})=1\}=  \sum_{s=0}^{\lfloor{n-1}/{2}\rfloor} {n \choose s}{{n-s} \choose {s+1}}.
        \end{equation}
Finally, let $\lambda=-1$, hence the equation reduced to $\Xi(X_{11})- \Xi(X_{12})=1$. 
Based on Theorem \ref{th 2.6} and Corollary \ref{cor 3.7}, it follows
        \begin{equation*}
            \# \{X \in \mathbb{P}^{n \times 1}:\ \Xi(X_{11})-\Xi(X_{12})=1\}= \# \{X \in \mathbb{P}^{n \times 1}:\ \Xi(X_{1})=1\}=\sum_{s=0}^{\lfloor{n-1}/{2}\rfloor} {n \choose s}{{n-s} \choose {s+1}}.
        \end{equation*}
        Therefore, $\# A_{\mathbb{P}}\{2\}_1$ is the sum of all the three cardinalities obtained:
        \begin{equation*}
            \# A_{\mathbb{P}}\{2\}_1= 3^{n_2} \sum_{s=0}^{\lfloor{n_1-1}/{2}\rfloor} {n_1 \choose s}{{n_1-s} \choose {s+1}}+2 \sum_{s=0}^{\lfloor{n-1}/{2}\rfloor} {n \choose s}{{n-s} \choose {s+1}}.
        \end{equation*}
\end{proof}
\begin{rem}
Let $A,B,C,D \in \mathbb{P}_2^{2 \times n}$ be matrices as defined in Theorem \ref{gws card} for $\mathbb{P}=\{0, \pm 1\}$.
Then 
$$\#(A_\mathbb{P}\{2\})=\#(B_\mathbb{P}\{2\})=\#(C_\mathbb{P}\{2\}) = \#(D_\mathbb{P}\{2\}).$$
\end{rem}

\begin{ex}
Let us evaluate an example from {\rm \cite{chan2022inner}}, where $A=\begin{pmatrix}    1 & 1& 0\\    1 &0 &0\end{pmatrix}=\begin{pmatrix}    A_1 \\ A_2\end{pmatrix}$, with 
$A_1=\begin{pmatrix}    1 & 1& 0\end{pmatrix}$ and $A_2= \begin{pmatrix}    1 &0 &0\end{pmatrix}$. 
Clearly, $A$ neither belongs to Class I nor to Class III. 
The matrix $A$ belongs to Class II. 
We will find the sets $A\{1\}$ and $A\{2\}$ and hence $A_{\mathbb{P}}\{1\}$ and $A_{\mathbb{P}}\{2\}$, using the previous results. 
Let $X=(X_1\ |\ X_2) \in \mathbb{K}^{3 \times 2}$, where 
$X_i=\begin{pmatrix}            x_{i1}  \\            x_{i2} \\            x_{i3}
        \end{pmatrix} \in \mathbb{K}^{3 \times 1}$ for $i \in \{1,2\}$. 
Further calculation leads to \\
$A_1X_1=x_{11}+x_{12}$, $A_2X_2=x_{21}$, $A_1X_2=x_{21}+x_{22}$ and $A_2X_1=x_{11}$. \\
Theorem \ref{thm 5.16} gives $x_{11}+x_{12}=1=x_{21}$ and $x_{21}+x_{22}=0=x_{11}$. 
Therefore,
        $$A\{1\}=A\{2\}_2=\{X=\begin{pmatrix}
            0 & 1 \\
            1 & -1\\
            x_{13} & x_{23}
        \end{pmatrix} : x_{13}, x_{23} \in \mathbb{K} \}.$$
        By Theorem \ref{rank1 OI extended full rank}, $Y=(Y_1\ |\ \lambda Y_1) \in A\{2\}_1$ if and only if $A_1Y_1+\lambda A_2Y_1=1$, i.e. $y_{11}+y_{12}+\lambda y_{11}=1$, where $\lambda \in \mathbb{K}$ and $Y_1=\begin{pmatrix}
            y_{11}  \\
            y_{12} \\
            y_{13}
        \end{pmatrix}  \in \mathbb{K}^{3 \times 1}$.
        $$A\{2\}_1=\left\{X=\begin{pmatrix}
            y_{11} & \lambda y_{11} \\
            y_{12} & \lambda y_{12}\\
            y_{13} & \lambda y_{13}
        \end{pmatrix} :\ y_{11}+y_{12}+\lambda y_{11}=1, \lambda \in \mathbb{K} \right\}.$$ 
Hence, $ A\{2\}=\{0\} \cup  A\{2\}_1 \cup A\{2\}_2$. Also, $A_{\mathbb{P}}\{1\}=A\{1\} \cap \mathbb{P}^{3 \times 2}$ and $A_{\mathbb{P}}\{2\}=A\{2\} \cap \mathbb{P}^{3 \times 2}$.
\end{ex}

\section{Concluding remarks}\label{secCon}

We conducted a systematic investigation of various classes of Bohemian matrices over the population $\mathbb{P}=\{0, \pm 1\}$. 
Our study focused on characterizing and representing both the inner and outer Bohemian inverses of these matrices. 
The classes we considered include rank-one Bohemian matrices, which encompass the full matrices \cite{chan2022inner}. 
Outer Bohemian inverses and their cardinalities are studied for rank-one Bohemian matrices. 
New insights into the inner Bohemian inverses and their cardinalities are provided. 
As a result, the open problem 2 presented in \cite{chan2022inner} is thoroughly examined for both inner and outer inverses.
Furthermore, we classified higher-rank Bohemian matrices into three classes: Class I, Class II, and Class III. 
Class I extends the category of well-settled matrices as outlined in \cite{chan2022inner}.
Class II consists of row-wise partitioned Bohemian matrices (and their transposes) that possess rank-one blocks under permutation equivalence. 
Class III matrices represent a specific subset of Class II matrices. 
Rank-two Class III matrices are categorized into four types of structured matrices. 
We examined the inner inverses of Class III matrices, along with both outer and inner inverses for full-row rank Bohemian matrices, which are a specific subclass of Class II matrices. 
We derived rank-one outer inverses as well as outer inverses of rank $r$ for full-row rank Bohemian matrices of rank $r$. 
Additionally, rank-one outer inverses are studied for Class I (generalized well-settled) matrices. 
The set of outer inverse is fully characterized for rank-two full-row rank matrices in Class III. 
In particular, we determined the cardinality for rank-two full-row rank generalized well-settled matrices. 
In conclusion, our work addresses open problem 3 concerning outer inverses of some classes of Bohemian matrices.

\smallskip
This paper also introduces several open problems for further study. 
A few of these are as follows.
\begin{enumerate}
    \item The structure of inner Bohemian inverses for matrices of Class II, excluding full-row rank matrices, remains unresolved.
    \item The structure of outer inverses for matrices in Classes I, II, and III has been partially explored in this paper, but further work is still needed.
\end{enumerate}

\medskip
\noindent {\bf Acknowledgements.} \small
Predrag Stanimirovi\'c is supported by by the Ministry of Science and Technology of China under grant H20240841\\
and by Ministry of Education, Science and Technological Development, Republic of Serbia, Contract No. 451-03-47/2023-01/200124.

\medskip
\noindent {\bf Conflict of interest}
The corresponding author states, on behalf of all authors, that there is no conflict of interest.

\bibliographystyle{elsarticle-harv}

\bibliography{ref}

\setlength{\parskip}{1pt}
  \setlength{\itemsep}{0pt plus 0.5ex}

%% else use the following coding to input the bibitems directly in the
%% TeX file.

% \begin{thebibliography}{00}

% %% \bibitem[Author(year)]{label}
% %% Text of bibliographic item

% \bibitem[ ()]{}

% \end{thebibliography}
\end{document}